\newtheorem{theorem}{Theorem}[section]
\newtheorem{corollary}[theorem]{Corollary}
\newtheorem{lemma}[theorem]{Lemma}
\newtheorem{proposition}[theorem]{Proposition}
\theoremstyle{definition}
\newtheorem{definition}[theorem]{Definition}
\newtheorem{remark}[theorem]{Remark}
\newtheorem{example}[theorem]{Example}
\newtheorem{question}[theorem]{Question}
\newtheorem{problem}[theorem]{Problem}
 \newcommand{\Map}{{\rm Map}}
\newcommand{\mdim}{{\rm mdim}}
\newcommand{\mdimsm}{\mdim_{\Sigma, \rM}}
\newcommand{\mdims}{{\rm mdim_{\Sigma}}}
\newcommand{\Wdim}{{\rm Wdim}}
\newcommand{\rM}{{\rm M}}
\newcommand{\Sym}{{\rm Sym}}
 \newcommand{\cB}{{\mathcal B}}
  \newcommand{\cC}{{\mathcal C}}
  \newcommand{\cF}{{\mathcal F}}
  \newcommand{\cK}{{\mathcal K}}
 \newcommand {\cU}{{\mathcal U}}
  \newcommand {\cV}{{\mathcal V}}
    \newcommand {\cW}{{\mathcal W}}
 \newcommand{\cZ}{{\mathcal Z}}
 \newcommand{\bN}{{\mathbb N}}
 \newcommand{\bR}{{\mathbb R}}
 \newcommand{\bT}{{\mathbb T}}
 \newcommand{\bZ}{{\mathbb Z}}
\newcommand{\sU}{{\mathscr U}}
\newcommand{\sV}{{\mathscr V}}
\begin{document}

\title{Conditional sofic mean dimension}

\author[B.~Liang]{Bingbing Liang}
\address[B.~Liang]{School of Mathematical Sciences, Soochow University, Suzhou, Jiangsu 215006, China}
\email{bbliang@suda.edu.cn}


\subjclass[2020]{Primary 37B02, 54E45.}
\keywords{sofic group, conditional mean dimension, $G$-extension, metric mean dimension}

\begin{abstract}
We undertake a study of the conditional mean dimensions for a factor map between continuous actions of a sofic group on two compact metrizable spaces.  When the group is infinitely amenable, all these concepts recover as the conditional mean dimensions introduced in \cite{L22}. A range of results established for actions of amenable groups are extended to the sofic framework.  

Additionally, our exploration encompasses the study of the relative mean dimension introduced by Tsukamoto, shedding light on its inherent correlation with the conditional metric mean dimension within the sofic context. A lower bound on the conditional metric mean dimension, originally proposed by Shi-Tsukamoto, is extended to the sofic case.  
\end{abstract}

\maketitle

\tableofcontents

\section{Introduction}

A classical Hurewicz's inequality states that the topological dimension of a compact metrizable space $X$ can be bounded by the dimension of a continuous image of $X$ and the dimension of corresponding fibers \cite[Theorem VI 7]{HW41}\cite[Theorem 3.3.10]{E95}. There are multiple versions of this inequality in various fields like geometric group theory \cite{BD06} and noncommutative geometry \cite{P22}.

When approaching this inequality from the vantage point of dynamical systems, a pivotal concept is the mean dimension introduced by Gromov \cite{G99m}. The profound implications of mean dimension became evident through the work of Lindenstrauss and Weiss, who not only delved into its intricacies but also constructed a minimal topological dynamical system with mean dimension surpassing one \cite{LW00}. This particular example decisively addressed Auslander's open problem on the embedding of dynamical systems \cite{LW00}. Thus mean dimension emerges as a topological obstruction for the embedding problem of dynamical systems. Since then mean dimension has become a focal point of rigorous investigation not only within the domain of topological dynamics \cite{L99, G15, GLT16, GT20, CDZ22, YCZ22},  but also gathered substantial attention from experts in diverse fields such as geometric analysis \cite{MT15, T18}, operator algebras \cite{LL18, LL19, EZ17}, and information theory \cite{LT18, LT19}.

By a {\it (topological) dynamical system}, denoted by $\Gamma \curvearrowright X$, we mean a continuous action of a discrete group $\Gamma$ on a compact metrizable space $X$.  Let $\pi \colon X \to Y$ be a {\it factor map} between two dynamical systems $\Gamma \curvearrowright X$ and $\Gamma \curvearrowright Y$, i.e. $\pi$ is a surjective continuous $\Gamma$-equivariant map. We may refer to $\Gamma \curvearrowright X$ as an {\it extension system} and $\Gamma \curvearrowright Y$ a {\it factor system}. For a closed subset $ K$ (not necessarily $\Gamma$-invariant) of $X$, as $\Gamma$ is amenable, denotes by $\mdim(K)$  the mean dimension of $K$ under the action of $\Gamma$ (see Definition \ref{closed mdim}).  Motivated by these foundational principles, a dynamical version of Hurewicz's inequality can be posed as follows:

\begin{problem}\label{main interest}
For a factor map $\pi\colon X \to Y$, does the inequality 
$\mdim(X) \leq \mdim(Y)+\sup_{y \in Y} \mdim(\pi^{-1}(y))$
hold true?
\end{problem}

This inquiry originated from Tsukamoto's investigation into the mean dimension of Brody curves \cite[Problem 4.8]{T08}. Recently the problem was negatively solved in \cite[Theorem 1.3]{T22}, where the counterexample was constructed by strengthening Gromov's idea to diminish the dimensions of fibers within $\varepsilon$-resolution. 
In contrast, some affirmative results were made regarding this problem for the so-called $G$-extension systems \cite[Corollary 2.18]{L22}. The notion of $G$-extensions was introduced by Bowen as a generalization of skew product systems \cite{B71}.  More generally, the notion of conditional mean dimension was introduced in \cite{L22} to give a general upper bound of extension systems, and many nice properties of conditional mean dimension were discussed.  
In a parallel exploration, conditional mean dimension takes on a resemblance to topological conditional entropy. Bowen's pivotal work \cite{B71} demonstrated the validity of the topological entropy version of Hurewicz's inequality. 
That is, as $\Gamma=\bZ$, for any factor map $\pi \colon X \to Y$, one has
$$h(X) \leq h(Y) +\sup_{y \in Y} h(\pi^{-1}(y)),$$
where $h(K)$ denotes the topological entropy of a closed subset $K$ of $X$.  
Remarkably, it was established that $\sup_{y \in Y} h(\pi^{-1}(y))$
coincides with the concept of topological conditional entropy, as introduced by Downarowicz and Serafin \cite{DS02, DZ15, Y15}.

The primary objective of this paper is to study the conditional mean dimension  within the broader context of sofic group actions. Our exploration is grounded in the prior examination of conditional mean dimension for actions of amenable groups in \cite{L22}.
The concept of sofic groups, originally introduced by Gromov in his pursuit of Gottschalk's surjectivity conjecture \cite{G99s}, broadens our understanding by encompassing amenable groups and residually finite groups within its class.
By counting the number of good dynamical models compatible with an approximation sequence from a sofic group, Bowen successfully expanded the classical measure entropy theory into the realm of sofic groups \cite{B10}. Using this idea many important dynamical invariants have been extended to the actions of sofic groups, like the topological entropy by Kerr-Li \cite{KL11}, mean dimension by Li \cite{Li13}, topological pressure by Chung \cite{C13}, and conditional entropy by Luo \cite{Lu17}. We may refer to such invariants of sofic group actions collectively as {\it  sofic invariants}.

For actions of amenable groups, it is a good property that conditional mean dimension always bounds the difference of mean dimension between the extension system and factor system \cite[Theoreme 1.2]{L22}. Recall that for the actions of amenable groups, the conditional mean dimension is given by looking at the $\varepsilon$-Width dimensions of all fibers simultaneously (Definition \ref{amenable definition}). Towards the actions of sofic groups, one can also adapt this approach to studying the  $\varepsilon$-Width dimensions restricted to the subset of good dynamical models. However, this pursuit unveils a subtle challenge—the desired bound appears elusive. Instead, by relaxing the condition to study a $\theta$-neighborhood of each fiber, it turns out that one can also give a proper generalization of condition mean dimension into the sofic setting (Theorem \ref{conditional extension}). In fact, this relaxing approach has already appeared in \cite{Lu17} when Luo proved the addition formula for the sofic entropy of $G$-extensions.
More importantly, this relaxed version of conditional mean dimension allows us to establish bounds for the difference of mean dimension between the extension system and factor system (Theorem \ref{general upperbound}).  Since the sofic invariants in general may not decrease when passing to factors, the mean dimension of the factor system in our formula of the bound is replaced with a smaller relative counterpart, which strengthens the estimation of mean dimension.  We mention that this approach has been previously employed in \cite{LL19} when establishing the addition formula of mean dimension for algebraic actions.

Let  $\pi \colon X \to Y$ be a factor map between actions of a sofic group $\Gamma$. 
Our discussion of conditional mean dimension is started with a variant of Gromov's $\varepsilon$-Width dimension, taking into account some continuous pseudmetrics $\rho_X$ and $\rho_Y$ on $X$ and $Y$ respectively. When $\rho_X$ and $\rho_Y$ are both dynamically generating, in Proposition \ref{second reduction}, we show that the conditional mean dimension is independent of the choice of such $\rho_X$ and $\rho_Y$. This independence from the choices of $\rho_X$ and $\rho_Y$ was also emphasized in a comprehensive discussion of conditional topological entropy in \cite{Lu17}. It is worth noting that even in the absolute scenario where $Y$ reduces to a singleton set, this reduction of choosing dynamically generating psedumetrics simplifies the computation of sofic mean dimension significantly. For example, the upper bound of the sofic mean dimension of the Bernoulli full shift $K^\Gamma$  for a general compact metrizable space $K$ can be given in a more straightforward way \cite[Lemma 3.1]{JQ21}.


As a general upper bound on mean (topological) dimension, the metric mean dimension, introduced by Lindenstrauss and Weiss \cite{LW00}, offers a measure of the growth of topological entropy on an $\varepsilon$-scale in proportion to $\log(1/\varepsilon)$. In alignment with the principles underlying conditional topological entropy, we embark on an exploration of the conditional counterpart of metric mean dimension within the context of sofic group actions. Built up with a thorough comparison of two variants of conditional topological entropy as detailed in \cite{Lu17}, in Theorem \ref{equivalent metric definitions}, it concludes that the conditional metric mean dimension can suppress the study of $\theta$-neighborhood of fibers to the genuine fibers.  

In the comparison between the conditional metric mean dimension and its topological equivalent, the concept of relative mean dimension, introduced by Tsukamoto \cite{T22} as a fiberwise adaptation of the conditional mean dimension, is expanded to the sofic context (refer to Definition \ref{relative smdim}). This expansion is shown to be effective in offering a coherent estimation for $G$-extensions in Proposition \ref{relative smdim}. The significance of this fiberwise concept is further highlighted in Theorem \ref{bridge inequality}, where we establish a natural correlation with the conditional metric mean dimension. Consequently, this correlation provides a revised approach to addressing \cite[Question 4.4]{L22}.

Using a sharp waist inequality, Shi and Tsukamoto obtained some lower bounds of conditional metric mean dimension for the integral group actions \cite{ST23, T23}. In particular, it indicates a type of superexponential growth on the fibers of a typical $\Gamma$-equivariant map. Drawing upon Theorem \ref{equivalent metric definitions} and a reduction formula presented in Theorem \ref{suppressing equality}, we extend these estimations to the sofic context in Theorem \ref{waist app2}.

\medskip
\noindent{\it Acknowledgments.}  
I am grateful for the careful reading and detailed comments provided by Professor Hanfeng Li. The author is supported by NSFC grant 12271387.

\section{Preliminaries}
\numberwithin{equation}{section}
\setcounter{equation}{0}

In this section, we prepare some basic terminologies, notations, and list some lemmas for later use. Throughout this paper $\Gamma$ is always a countable discrete group with identity element $e_\Gamma$. Denote by $\cF(\Gamma)$ the collection of nonempty finite subsets of $\Gamma$.
\subsection{Amenable and sofic groups} The group $\Gamma$ is called {\it amenable} if for every $K\in \cF(\Gamma)$ and every $\delta>0$ there exists an $F\in \cF(\Gamma)$ with $|KF\setminus F|<\delta |F|$.  


For a $\bR$-valued function $\varphi$ defined on $\cF(\Gamma)$, we say that   {\it $\varphi(F)$ converges to $c\in \bR$ when $F\in \cF(\Gamma)$ becomes more and more left
invariant}, denoted by 
$$\lim_F\varphi(F)=c,$$
if for any $\varepsilon>0$ there are some $K \in \cF(\Gamma)$ and $\delta > 0$ such that  $|\varphi(F)-c|<\varepsilon$ for all $F \in \cF(\Gamma)$ satisfying $|KF\setminus F|<\delta |F|$. 
More generally, we may consider the following limit supremum over the net of asymptotically invariant finite subsets $F$ of $\Gamma$:
\begin{equation} \label{limsup}
     \limsup_F \varphi(F) :=\inf_{K, \delta > 0} \sup \{ \varphi(F): |KF\setminus F| < \delta, F \in \cF(\Gamma) \}
\end{equation}
 where $K$ and $F$ range over all elements of $\cF(\Gamma)$.

The following strong version of Ornstein-Weiss lemma is due to Gromov \cite[1.3.1]{G99m}\cite[Theorem 4.38]{KLb} \cite[Theorem 9.4.1]{C15}.
\begin{lemma} \label{OW}
Let  $\varphi \colon \cF(\Gamma)  \to [0, +\infty)$ be a function satisfying\\
\begin{enumerate}
    \item $\varphi(Fs)=\varphi(F)$ for all $F \in \cF(\Gamma)$ and $s \in \Gamma$; \\
    \item $\varphi(F_1\cup F_2) \leq \varphi(F_1) + \varphi(F_2)$ for all $F_1, F_2 \in \cF(\Gamma)$.\\
\end{enumerate}
Then the limit $\lim_F\varphi(F)/|F|$ exists.
\end{lemma}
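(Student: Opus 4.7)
The plan is to prove $\beta := \limsup_F \varphi(F)/|F| \leq \alpha := \liminf_F \varphi(F)/|F|$ along the net of asymptotically invariant sets; the reverse inequality is automatic, so this forces existence of the limit. As a preliminary, right-invariance gives $\varphi(\{s\}) = \varphi(\{e_\Gamma\})$ for every $s \in \Gamma$, and subadditivity applied to $F = \bigcup_{s \in F}\{s\}$ then yields the universal bound $\varphi(F) \leq |F|\varphi(\{e_\Gamma\})$. In particular, $\alpha$ and $\beta$ are finite, and $\varphi(R) \leq |R|\varphi(\{e_\Gamma\})$ for any leftover set $R$.

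The essential ingredient is the Ornstein--Weiss $\varepsilon$-quasi-tiling theorem. Fixing $\varepsilon > 0$, the idea is to choose finitely many templates $T_1, \ldots, T_n \in \cF(\Gamma)$, each sufficiently left-invariant so that $\varphi(T_i)/|T_i| \leq \alpha + \varepsilon$ (possible by the definition of $\alpha$), in such a way that the collection $\{T_i\}$ serves simultaneously as a set of tiles: there exist auxiliary parameters $K \in \cF(\Gamma)$ and $\delta > 0$ such that every $F$ with $|KF \setminus F| < \delta|F|$ admits finite sets $C_i \subseteq \Gamma$ with $T_i C_i \subseteq F$, the family $\{T_i c : 1 \leq i \leq n,\ c \in C_i\}$ is $\varepsilon$-almost disjoint in the sense $\sum_i |C_i||T_i| \leq (1+\varepsilon)\bigl|\bigcup_{i,c} T_i c\bigr|$, and the complement $R := F \setminus \bigcup_{i,c} T_i c$ satisfies $|R| \leq \varepsilon|F|$. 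Constructing such a family is the actual combinatorial heart of the lemma; I would invoke it directly from the cited references rather than reprove it.

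Given such a tiling, subadditivity (over the cover $F = R \cup \bigcup_{i,c} T_i c$) combined with right-invariance of $\varphi$ gives
\[
\varphi(F) \leq \varphi(R) + \sum_{i=1}^n \sum_{c \in C_i} \varphi(T_i c) = \varphi(R) + \sum_{i=1}^n |C_i|\,\varphi(T_i) \leq \varphi(\{e_\Gamma\})|R| + (\alpha+\varepsilon)\sum_{i=1}^n |C_i||T_i|.
\]
Substituting the tiling bounds $\sum_i |C_i||T_i| \leq (1+\varepsilon)|F|$ and $|R| \leq \varepsilon|F|$, I obtain
\[
\frac{\varphi(F)}{|F|} \leq (1+\varepsilon)(\alpha+\varepsilon) + \varepsilon\,\varphi(\{e_\Gamma\})
\]
for every $F$ that is $(K,\delta)$-invariant. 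This bound is uniform in $F$, so taking $\limsup_F$ and then $\varepsilon \to 0$ yields $\beta \leq \alpha$.

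The main obstacle is the quasi-tiling theorem itself: one must produce the templates $T_1,\dots,T_n$ by a greedy inductive procedure so that each $T_i$ is both near-optimal for the ratio $\varphi(T_i)/|T_i|$ and sufficiently disjoint from the previously placed tiles to keep the almost-disjointness constant $(1+\varepsilon)$ under control uniformly. A subsidiary point, which makes the bound above clean, is that the leftover $R$ causes no trouble precisely because the single-element bound $\varphi(\{s\}) = \varphi(\{e_\Gamma\})$ forces $\varphi$ to be linearly bounded on arbitrary finite sets; thus the $\varepsilon$-fraction uncovered by the tiling contributes only $O(\varepsilon)$ to the final estimate.
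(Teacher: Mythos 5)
The paper does not actually prove Lemma \ref{OW}; it imports it from Gromov \cite{G99m} and the books \cite{KLb,C15}, so there is no in-paper argument to compare against. Your proposal is the standard Ornstein--Weiss argument from those sources, and its skeleton is sound: the linear bound $\varphi(F)\leq|F|\varphi(\{e_\Gamma\})$ from (1) and (2), the reduction to $\limsup_F\varphi(F)/|F|\leq\liminf_F\varphi(F)/|F|$, and the subadditivity estimate over an $\varepsilon$-quasi-tiling by right-translates of near-optimal templates all work (in particular subadditivity needs no disjointness of the tiles, so $\varepsilon$-disjointness only costs the factor $(1+\varepsilon)$ you already budgeted, and the leftover set is controlled exactly as you say).

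The one step you defer to the literature is not quite what the literature states, and this is the only real gap. The quasi-tiling theorem in the form you need (e.g.\ \cite[Theorem 4.36]{KLb}) does not accept arbitrary templates: it requires a nested tower $e_\Gamma\in T_1\subseteq T_2\subseteq\cdots\subseteq T_n$ with each $T_k$ sufficiently left-invariant relative to $T_{k-1}$, and it tiles $F$ by translates of \emph{all} the $T_k$ with no control over which template covers which fraction of $F$; hence every $T_k$ must satisfy $\varphi(T_k)/|T_k|\leq\alpha+\varepsilon$. (The version \cite[Lemma 4.4]{LT14} used elsewhere in this paper is even less usable here, since there the tiles are produced existentially rather than chosen.) The definition of $\alpha$ as a $\liminf$ over the net only gives, for each invariance requirement, \emph{some} near-optimal set --- not one containing $T_{k-1}$. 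This is fixable by a short argument you should supply: pick a sufficiently invariant $F_k$ with $\varphi(F_k)/|F_k|\leq\alpha+\varepsilon/2$ and $|F_k|$ very large, and set $T_k:=T_{k-1}\cup F_k$; then $\varphi(T_k)\leq\varphi(T_{k-1})+\varphi(F_k)\leq|T_{k-1}|\varphi(\{e_\Gamma\})+(\alpha+\varepsilon/2)|F_k|$ and $|T_k|\geq|F_k|$, so $\varphi(T_k)/|T_k|\leq\alpha+\varepsilon$ once $|F_k|$ dominates $2|T_{k-1}|\varphi(\{e_\Gamma\})/\varepsilon$, while the required invariance of $T_k$ is likewise inherited from $F_k$ for $|F_k|$ large. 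Note also that your description of the obstacle --- keeping each new template ``sufficiently disjoint from the previously placed tiles'' --- misidentifies it: $\varepsilon$-disjointness of the translates is part of the theorem's \emph{conclusion}, whereas the constraint on the templates is nesting and relative invariance. With the nesting-plus-near-optimality step supplied, your proof is complete.
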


For every $d \in \bN$ write $[d]$ for the set $\{1, 2, \dots, d\}$ and $\Sym(d)$ for the symmetric group over $[d]$. Given a map $\sigma \colon \Gamma \to \Sym(d)$ we adopt the convention $\sigma(s)(v)=\sigma_s(v)$ for every $s \in \Gamma$ and $v \in [d]$.  A sequence of maps $\Sigma=\{\sigma_i: \Gamma \to {\rm \Sym} (d_i)\}_{i \in \bN}$  is called a {\it sofic approximation} for $\Gamma$  if it satisfies:
\begin{enumerate}
\item $\lim_{i\to \infty}|\{v\in [d_i]: \sigma_{i,s}\sigma_{i,t}(v)=\sigma_{i, st}(v)\}|/d_i=1$ for all $s, t\in \Gamma$,

\item $\lim_{i\to \infty}|\{v\in [d_i]: \sigma_{i, s}(v)\neq \sigma_{i,t}(v)\}|/d_i=1$ for all distinct $s, t\in \Gamma$,

\item $\lim_{i\to \infty} d_i=+\infty$.
\end{enumerate}
The group $\Gamma$ is called {\it sofic} if it admits a sofic approximation. 
We shall frequently say that $\sigma$ is a {\it good enough (sofic approximation) } for $\Gamma$ in the sense that for some small $\tau > 0$  and large $F \in \cF(\Gamma)$ (to be specified in the context), the inequality $|\{v \in [d]: \sigma_s\sigma_t(v)=\sigma_{st}(v)\}|/d > 1-\tau$ holds for every $s, t \in F$, and inequality $|\{v \in [d]: \sigma_s(v) \neq \sigma_t(v)\}| > 1-\tau$ holds for every distinct $s, t \in F$.

Every amenable group is sofic since one can use a sequence of asymptotically invariant subsets of the amenable group to construct a sofic approximation. Residually finite groups are also sofic since a sequence of exhausting finite-index subgroups naturally induces a sofic approximation in which case each approximating map is a genuine group homomorphism. We refer the reader to \cite{CCb, CLb} for more information on sofic groups. 

When $\Gamma$ is amenable, we can take advantage of quasitiling property of $\Gamma$ to partition a sufficiently large approximating set into a major part reminiscent of a Rokhlin tower,  together with a small remainder \cite[Lemma 4.6]{KL131}\cite[Lemma 10.36]{KLb}\cite[Lemma 3.2]{Li13}. This paves the way to proving the generalization of sofic invariants.   

\begin{lemma}  \label{amenable generalize}
    Let $\Gamma$ be a countable amenable group.  Fix $0 < \tau, \delta < 1$ and $K \in \cF(\Gamma)$. Then there exist $\ell \in \mathbb{N}$  and $F_1, \dots, F_\ell \in \cF(\Gamma)$ with $|KF_k \setminus F_k| < \delta |F_k|$ for every $k=1, \dots, \ell$, such that for any good enough sofic approximation $\sigma \colon \Gamma \to \Sym(d)$ and any $\mathcal{W} \subseteq [d]$ with $|\mathcal{W}| \geq (1-\tau/2)d$, there exist $\mathcal{C}_1, \dots, \mathcal{C}_\ell \subseteq \cW$ satisfying the following: 
\begin{itemize}
\item[(i)] the map $F_k \times \cC_k \to \sigma(F_k)\cC_k$ sending $(s, c)$ to $\sigma_s(c)$ is bijective for every $k =1, \dots, \ell$;
\item[(ii)] the sets $\{\sigma(F_k)\cC_k\}_{k=1}^\ell$ are pairwise disjoint with $|\bigsqcup_{k=1}^\ell \sigma(F_k)\cC_k| \geq (1-\tau)d$.
\end{itemize}
\end{lemma}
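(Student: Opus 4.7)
The plan is to derive this lemma by combining the Ornstein-Weiss quasitiling theorem for $\Gamma$ with a greedy construction on the sofic approximation, closed by a double-counting argument.

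First, apply the Ornstein-Weiss quasitiling theorem (with tolerance $\tau/4$) to obtain $\ell \in \bN$ and finite $(K,\delta)$-invariant sets $F_1 \subseteq \cdots \subseteq F_\ell$ in $\cF(\Gamma)$ enjoying the following quasitiling property: for any sufficiently invariant $A \in \cF(\Gamma)$ there exist $C_1, \ldots, C_\ell \subseteq A$ with $F_k \times C_k \to F_k C_k$ bijective, the towers $\{F_k C_k\}_k$ pairwise disjoint, and $|\bigsqcup_k F_k C_k| \geq (1-\tau/4)|A|$. Next, choose a large $F \in \cF(\Gamma)$ containing $\bigcup_k F_k$ together with enough translates of it, and specify the ``good enough'' parameters for $\sigma$ so that the set
\begin{equation*}
B := \{v \in [d] : \sigma_s\sigma_t(v) \neq \sigma_{st}(v) \text{ for some } s,t \in F, \text{ or } \sigma_s(v)=\sigma_t(v) \text{ for some distinct } s,t \in F\}
\end{equation*}
satisfies $|B| < (\tau/4)d$.

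Then build $\cC_\ell, \ldots, \cC_1$ greedily, in this order of descending $k$: at stage $k$, let $\cC_k \subseteq \cW \setminus B$ be a \emph{maximal} subset for which the $F_k$-translates $\sigma(F_k)c$, $c \in \cC_k$, are pairwise disjoint and all disjoint from $\bigsqcup_{j > k} \sigma(F_j)\cC_j$. Since $F_k \subseteq F$ and $\cC_k \cap B = \emptyset$, the map $s \mapsto \sigma_s(c)$ is already injective on $F_k$ for each $c \in \cC_k$; together with the tower-disjointness, this yields the bijectivity condition (i), and the pairwise disjointness in (ii) is built into the construction.

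For the coverage bound, write $U := [d]\setminus \bigsqcup_k \sigma(F_k)\cC_k$ and double-count pairs $(v, s)$ with $v \in [d]\setminus B$, $s \in F$, and $\sigma_s(v) \in U$. For fixed $v \in [d]\setminus B$ the local frame $s \mapsto \sigma_s(v)$ is injective on $F$, and greedy maximality forbids the $F$-pullback of $U$ from containing any $F_k$-translate whose pushforward would yield a legal new tower; the Ornstein-Weiss quasitiling applied to a suitable invariant refinement of this pullback then bounds its $F$-density by $\tau/4 + o(1)$. Summing over $v$ and using that each $\sigma_s$ is a permutation converts the double sum into $|F|\cdot|U|$ up to an error controlled by $|B|$, giving $|U| \leq \tau d$. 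The main obstacle is this final counting: the errors from $B$, from the boundary of the Ornstein-Weiss tiling, and from the maximality analysis must all be absorbed simultaneously, which forces a careful upstream choice of $F$ and of the soficity tolerance.
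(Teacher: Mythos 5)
The paper does not actually prove this lemma; it is quoted from the literature (\cite[Lemma 4.6]{KL131}, \cite[Lemma 10.36]{KLb}, \cite[Lemma 3.2]{Li13}), so your from-scratch attempt is compared against those proofs. Your skeleton is the right one — transport the Ornstein--Weiss quasitiling to $[d]$ via a greedy selection of towers $\sigma(F_k)c$, largest scale first, discarding a bad set $B$ — and your verification of condition (i) is fine. But the coverage step is where the real content lies, and as written it fails. A greedy \emph{maximal exactly-disjoint} selection is too weak: after all scales are exhausted, maximality only tells you that for every remaining good point $w$ the set $\sigma(F_1)w$ meets the covered region, and double counting then yields a covered proportion of only about $1/(1+|F_1|)$ of $d$, not $1-\tau$. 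Your attempt to do better via the pullback $E_v=\{s\in F:\sigma_s(v)\in U\}$ rests on the claim that a subset of $F$ containing no translate $F_kt$ (with admissible center) must have density at most $\tau/4$ in $F$; this is false. For instance, in $\bZ$ with $F_k$ intervals of length $n_k\geq n$, the complement in $F$ of an arithmetic progression of gap $n$ contains no translate of any $F_k$ yet has density $1-1/n$. The Ornstein--Weiss theorem quasitiles sufficiently \emph{invariant} sets; it says nothing about arbitrary dense subsets of $F$, and $E_v$ has no invariance, so ``a suitable invariant refinement of this pullback'' does not exist in the needed sense.

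The standard repair, which is what the cited proofs do, has two ingredients you are missing. First, the greedy selection must be of \emph{$\epsilon$-disjoint} (not exactly disjoint) families: maximality of an $\epsilon$-disjoint family of translates of $F_k$ forces every admissible candidate tower to overlap the selected union in more than $\epsilon|F_k|$ points, and the resulting double count shows each scale covers a proportion at least roughly $\epsilon$ of what remains; one then takes $\ell$ with $(1-\epsilon)^{\ell}<\tau/2$, so $\ell$ depends on $\epsilon$ and $\tau$ (your $\ell$ comes only from the quasitiling of $\Gamma$ and carries no such guarantee). Second, one must convert the $\epsilon$-disjoint towers back into exactly disjoint towers of \emph{fixed} shapes: inside each $\sigma(F_k)c$ choose a subset of proportion $\geq 1-\epsilon$ making the family disjoint; by injectivity of $s\mapsto\sigma_s(c)$ on $F_k$ this subset equals $\sigma(F')c$ for some $F'\subseteq F_k$ with $|F'|\geq(1-\epsilon)|F_k|$, and since the collection of all such $F'$ is a finite list determined in advance by the $F_k$ and $\epsilon$ (and each $F'$ is still $(K,\delta)$-invariant once $\epsilon$ is small relative to $\delta$ and $|K|$), regrouping the towers by shape produces the required $F_1,\dots,F_{\ell'}$ and $\cC_1,\dots,\cC_{\ell'}$. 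Without both of these steps the lemma is not established.
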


\begin{definition}
    Consider a continuous action $\Gamma \curvearrowright X$ and a continuous pseudometric $\rho_X$ on $X$. Given $F \in \cF(\Gamma), \delta > 0$ and a map $\sigma \colon \Gamma \to \Sym(d)$ for some $d \in \bN$, we set  
$$\Map(\rho_X, F, \delta, \sigma)= \{\varphi \colon [d] \to X:  \rho_{X, 2}(s\varphi, \varphi \circ \sigma(s)) \leq \delta {\rm  \ for \ every \ } s \in F   \},$$
where for every $1 \leq p < \infty$ the pseudometric $\rho_{X, p}$ on $X^{[d]}$  is defined via
$$\rho_{X, p}(\varphi, \psi)=\left(\frac{1}{d}\sum_{v \in [d]} \rho_X(\varphi(v), \psi(v))^p\right)^{1/p}.$$
\end{definition}
Note that $\Map(\rho_X, F, \delta, \sigma)$ is a closed subset of $X^{[d]}$ and its elements can be interpreted as approximately equivariant maps from $[d]$ to $X$. Meanwhile, the pseudometric $\rho_{X, \infty}$ on $X^{[d]}$ is defined via
$$\rho_{X, \infty}(\varphi, \psi)=\max_{v \in [d]} \rho_X(\varphi(v), \psi(v)).$$

The following estimation is used frequently in the sequel \cite[Lemma 2.8]{Li13}.
\begin{lemma} \label{Map lowerbound}
  Let $\rho_X$ be a continuous pseudometric on $X$, $F \in \cF(\Gamma)$ and $\delta >0$. Consider any map $\sigma \colon \Gamma \to \Sym(d)$. Then for each $\varphi \in \Map(\rho_X, F, \delta, \sigma)$, we have
  $$|\{v \in [d]: \rho_X(s\varphi(v), \varphi(\sigma_s(v))) \leq \sqrt{\delta}\} |\geq (1-\delta)d.$$
\end{lemma}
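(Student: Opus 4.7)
The plan is to apply a Chebyshev/Markov-style inequality directly to the definition. Fix $s \in F$ and $\varphi \in \Map(\rho_X, F, \delta, \sigma)$. By the defining property of the map space,
\[
\rho_{X,2}(s\varphi, \varphi \circ \sigma(s)) \leq \delta,
\]
which, upon squaring and using the definition of $\rho_{X,2}$, becomes
\[
\frac{1}{d}\sum_{v \in [d]} \rho_X(s\varphi(v), \varphi(\sigma_s(v)))^2 \leq \delta^2.
\]

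Next I would introduce the exceptional set
\[
B = \bigl\{ v \in [d] : \rho_X(s\varphi(v), \varphi(\sigma_s(v))) > \sqrt{\delta} \bigr\}
\]
and bound the above sum from below by restricting to $B$: every $v \in B$ contributes more than $\delta$ to the sum, so
\[
\delta^2 \;\geq\; \frac{1}{d}\sum_{v \in B} \rho_X(s\varphi(v), \varphi(\sigma_s(v)))^2 \;>\; \frac{|B|}{d}\cdot \delta.
\]
Dividing by $\delta$ yields $|B| < \delta d$, and passing to the complement gives the desired inequality $|\{v \in [d] : \rho_X(s\varphi(v), \varphi(\sigma_s(v))) \leq \sqrt{\delta}\}| \geq (1-\delta)d$.

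There is really no obstacle here; this is a one-line Markov inequality applied to the squared pseudometric. The only point worth being slightly careful about is that the inequality $\rho_{X,2} \leq \delta$ in the definition of $\Map$ gives an $L^2$-average bound, and the factor $\sqrt{\delta}$ in the conclusion is precisely the square root that converts the $L^2$-average bound into a pointwise bound on all but a $\delta$-fraction of coordinates.
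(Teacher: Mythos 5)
Your proof is correct and is exactly the standard Chebyshev--Markov argument; the paper itself does not reprove this lemma but simply cites \cite[Lemma 2.8]{Li13}, whose proof is the same squaring-and-averaging computation you give. The only cosmetic point is that the lemma's conclusion is to be read for each fixed $s \in F$ separately (as your proof does), which is consistent with how the paper later intersects these sets over $s \in K$ to get the bound $(1-|K|\delta)d$.
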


A pseudometric $\rho_X$ on $X$ is called {\it dynamically generating} if 
 $$\sup_{s \in \Gamma} \rho_X(sx, sx') > 0$$
 for any distinct $x, x' \in X$.
 Consider an enumeration of the elements of $\Gamma$ as $s_1=e_\Gamma, s_2, \dots$. For any dynamically generating continuous pseudometric $\rho_X$ on $X$, it induces a compatible metric $\widetilde{\rho_X}$ on $X$ given by
 \begin{equation} \label{compatible induced}
      \widetilde{\rho_X}(x, y)=\sum_{n=1}^\infty \frac{1}{2^n} \rho_X(s_nx, s_ny).
 \end{equation}
 The following lemma is extracted from \cite[Lemma 2.3]{KL13}.
\begin{lemma} \label{Map pseudo}
Let $\rho_X, \rho'_X$ be two continuous pseudometrics on $X$ such that $\rho'_X$ is dynamically generating. Let $F \in \cF(\Gamma)$ and $\delta > 0$. Then there exist $F' \in \cF(\Gamma)$ and $\delta' > 0$ such that 
$$\Map(\rho'_X, F', \delta', \sigma) \subseteq \Map(\rho_X, F, \delta, \sigma)$$
for any good enough map $\sigma \colon \Gamma \to \Sym(d)$.
\end{lemma}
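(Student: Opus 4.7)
The plan is to reduce a $\rho_X$-estimate along the finite set $F$ to finitely many $\rho'_X$-estimates along a slightly larger set $F'$, exploiting that $\rho'_X$ being dynamically generating means the induced metric $\widetilde{\rho'_X}$ from \eqref{compatible induced} is compatible with the topology of $X$. Since $\rho_X$ is continuous on a compact space, it is uniformly continuous with respect to $\widetilde{\rho'_X}$. Let $C$ bound both the $\rho_X$- and $\rho'_X$-diameters of $X$. First I would pick $\eta > 0$ so that $\widetilde{\rho'_X}(x,y) < \eta$ forces $\rho_X(x,y) \leq \delta/2$, and then choose $N \in \bN$ with $2^{1-N}C < \eta/2$; the pointwise inequalities $\rho'_X(s_n x, s_n y) \leq \eta/2$ for $n = 1, \dots, N$ then imply $\widetilde{\rho'_X}(x,y) < \eta$, hence $\rho_X(x,y) \leq \delta/2$.

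Now set $F' := F \cup \{s_n : n \leq N\} \cup \{s_n s : n \leq N,\ s \in F\}$, take $\delta' > 0$ small, and fix $\varphi \in \Map(\rho'_X, F', \delta', \sigma)$. For each $s \in F$ and $n \leq N$ I would combine three ingredients. (i) Lemma \ref{Map lowerbound} applied to $s_n s \in F'$ gives a set $V_{n,s} \subseteq [d]$ of size at least $(1-\delta')d$ on which $\rho'_X(s_n s \cdot \varphi(v), \varphi(\sigma_{s_n s}(v))) \leq \sqrt{\delta'}$. (ii) The same lemma applied to $s_n \in F'$, after precomposing with the permutation $\sigma_s$, yields $V'_{n,s}$ of the same cardinality on which $\rho'_X(s_n \cdot \varphi(\sigma_s(v)), \varphi(\sigma_{s_n}\sigma_s(v))) \leq \sqrt{\delta'}$. (iii) For $\sigma$ a good enough sofic approximation, the multiplicativity set $W_{n,s} := \{v : \sigma_{s_n s}(v) = \sigma_{s_n}\sigma_s(v)\}$ satisfies $|W_{n,s}| \geq (1-\tau)d$ for any prescribed $\tau > 0$. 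On the intersection $\bigcap_{n \leq N, s \in F}(V_{n,s} \cap V'_{n,s} \cap W_{n,s})$, the triangle inequality produces $\rho'_X(s_n \cdot s\varphi(v), s_n \cdot \varphi(\sigma_s(v))) \leq 2\sqrt{\delta'}$ for all $n \leq N$, so imposing $\sqrt{\delta'} \leq \eta/4$ forces $\rho_X(s\varphi(v), \varphi(\sigma_s(v))) \leq \delta/2$ there.

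The complement has proportion at most $N|F|(2\delta' + \tau)$, where $\rho_X$ is bounded by $C$, so
$$\rho_{X,2}(s\varphi, \varphi \circ \sigma(s))^2 \leq (\delta/2)^2 + N|F|(2\delta' + \tau)C^2,$$
which is at most $\delta^2$ once $\delta'$ and $\tau$ are chosen small enough relative to $N$, $|F|$, $C$, and $\delta$. This gives $\varphi \in \Map(\rho_X, F, \delta, \sigma)$, as required. The main obstacle is a bookkeeping one: ensuring that the union of exceptional sets across all pairs $(n,s)$ with $n \leq N$ and $s \in F$ remains a small fraction of $d$; this is handled uniformly by shrinking $\delta'$ and the soficness tolerance $\tau$ at the end, after $N$ and $F'$ have been fixed.
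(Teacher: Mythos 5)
Your argument is correct and is essentially the standard proof of this reduction (the paper itself gives no proof, citing \cite[Lemma 2.3]{KL13}): control $\rho_X$ by finitely many $\Gamma$-translates of $\rho'_X$ via the compatible metric $\widetilde{\rho'_X}$, then use Lemma \ref{Map lowerbound} together with approximate multiplicativity of $\sigma$ to transfer the pointwise bounds, with the exceptional set absorbed into the $\ell^2$-average. The quantifier bookkeeping ($N$ and $F'$ fixed first, then $\delta'$ and the soficity tolerance $\tau$) is handled correctly.
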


Modifying the argument in the proof of \cite[Lemma 2.10]{Li13}, we have the following:
\begin{lemma} \label{Map into}
  Let $\rho_X, \rho_Y$ be two continuous pseudometrics on $X$ and $Y$ respectively such that $\rho_X$ is dynamically generating. Suppose that $\pi \colon X \to Y$ is a factor map, $F \in \cF(\Gamma)$, and $\delta > 0$. Then there exist $F' \in \cF(\Gamma)$ and $\delta' > 0$ such that   for any good enough map $\sigma \colon \Gamma \to \Sym(d)$, one has
  $$\pi^d(\Map(\rho_X, F', \delta', \sigma)) \subseteq \Map(\rho_Y, F, \delta, \sigma),$$ 
where $\pi^d \colon X^d \to Y^d$ sends $\varphi$ to $\pi \circ \varphi$.
\end{lemma}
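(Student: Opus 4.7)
The plan is to push forward the approximate equivariance from $X$ to $Y$ via $\pi$, with the key bridge being the compatible metric $\widetilde{\rho_X}$ on $X$ induced from the dynamically generating $\rho_X$. Since $X$ is compact and $\pi\colon (X,\widetilde{\rho_X}) \to (Y,\rho_Y)$ is continuous, $\pi$ is uniformly continuous: for any $\eta > 0$ there is an $\eta' > 0$ such that $\widetilde{\rho_X}(x,x') < \eta'$ implies $\rho_Y(\pi(x),\pi(x')) < \eta$. Setting $\eta = \delta/2$, I would then choose $N$ so large and $\eta'' > 0$ so small that the tail $\sum_{n > N} 2^{-n} M_X < \eta'/2$ (where $M_X = \mathrm{diam}_{\rho_X}(X)$) and $\sum_{n \leq N} 2^{-n} \eta'' < \eta'/2$; this way, pointwise control of $\rho_X(s_n x, s_n x')$ for $n \leq N$ translates to control of $\widetilde{\rho_X}(x,x')$, hence of $\rho_Y(\pi(x),\pi(x'))$.

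Next I would set $F' = \{s_n s : s \in F \cup \{e_\Gamma\},\ 1 \leq n \leq N\}$ and pick $\delta' > 0$ small (to be fixed at the end). For $\varphi \in \Map(\rho_X, F', \delta', \sigma)$, the task reduces to controlling $\rho_X(s_n s\varphi(v), s_n \varphi(\sigma_s(v)))$ for each $s \in F$ and $n \leq N$, pointwise on a large subset of $[d]$. Introducing the intermediate point $\varphi(\sigma_{s_n s}(v))$ and then $\varphi(\sigma_{s_n}\sigma_s(v))$, the triangle inequality yields three terms: the first controlled by Lemma \ref{Map lowerbound} applied with $s_n s \in F'$, the third by Lemma \ref{Map lowerbound} applied with $s_n \in F'$ (at the point $\sigma_s(v)$, using that $\sigma_s$ is a bijection of $[d]$), and the middle term vanishes off the set where $\sigma_{s_n s}(v) \neq \sigma_{s_n}\sigma_s(v)$, which is small provided $\sigma$ is a good enough sofic approximation.

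Intersecting the resulting large sets over all $s \in F$ and $n \leq N$, I obtain a subset $V \subseteq [d]$ of density at least $1 - |F|N(2\delta' + \tau)$ (for some quantitative sofic error $\tau$) on which $\rho_X(s_n s\varphi(v), s_n \varphi(\sigma_s(v))) \leq 2\sqrt{\delta'}$ simultaneously. For $v \in V$ and $s \in F$, expanding $\widetilde{\rho_X}(s\varphi(v), \varphi(\sigma_s(v)))$ via \eqref{compatible induced} and splitting at $n = N$ gives a bound $2\sqrt{\delta'} + 2^{-N} M_X < \eta'$, once $\delta'$ is small enough; consequently $\rho_Y(\pi(s\varphi(v)), \pi(\varphi(\sigma_s(v)))) = \rho_Y(s \pi\varphi(v), \pi\varphi(\sigma_s(v))) < \delta/2$ by $\Gamma$-equivariance of $\pi$ and the choice of $\eta$.

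Finally, splitting the $L^2$ average over $V$ and $[d] \setminus V$ and using $M_Y = \mathrm{diam}_{\rho_Y}(Y)$ as the crude bound on the complement gives
\[
\rho_{Y,2}(s\pi\varphi, \pi\varphi \circ \sigma_s)^2 \leq (\delta/2)^2 + |F|N(2\delta' + \tau) M_Y^2,
\]
which can be made less than $\delta^2$ by shrinking $\delta'$ and $\tau$ (that is, by requiring $\sigma$ to be sufficiently good). This yields $\pi \circ \varphi \in \Map(\rho_Y, F, \delta, \sigma)$. The main obstacle is the bookkeeping: the parameters $N$, $\eta'$, $\delta'$, $\tau$, and $F'$ must be chosen in the correct dependency order so that (i) the pointwise bound on $V$ survives the uniform continuity estimate and (ii) the exceptional set $[d] \setminus V$ is small enough to absorb the crude $M_Y^2$ contribution in the $L^2$ norm; the dynamically generating hypothesis on $\rho_X$ enters precisely to make step (i) available.
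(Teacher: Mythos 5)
Your argument is correct and follows essentially the same route as the paper: both pass through the compatible metric $\widetilde{\rho_X}$ induced from the dynamically generating $\rho_X$, the only difference being that the paper simply composes two cited lemmas (\cite[Lemma 2.10]{Li13} for the containment $\pi^d(\Map(\widetilde{\rho_X}, F, \delta_1, \sigma)) \subseteq \Map(\rho_Y, F, \delta, \sigma)$, and Lemma \ref{Map pseudo} for $\Map(\rho_X, F', \delta', \sigma) \subseteq \Map(\widetilde{\rho_X}, F, \delta_1, \sigma)$), whereas you unfold their contents into one self-contained estimate. Your bookkeeping (truncation at $N$, the choice of $F'$, the three-term triangle inequality with the sofic multiplicativity error, and the final $L^2$ split over $V$ and its complement) is sound.
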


\begin{proof}
    Let $\widetilde{\rho_X}$ be the compatible metric induced from $\rho_X$ as in (\ref{compatible induced}). According to \cite[Lemma 2.10]{Li13}, there exists $\delta_1 > 0$ such that 
    $$\pi^d(\Map(\widetilde{\rho_X}, F, \delta_1, \sigma)) \subseteq \Map(\rho_Y, F, \delta, \sigma)$$
    for any map $\sigma \colon \Gamma \to \Sym(d)$.
Applying Lemma \ref{Map pseudo} to such $F$ and $\delta_1$, we find $F' \in \cF(\Gamma)$ and $\delta' > 0$ such that $\Map(\rho_X, F', \delta', \sigma) \subseteq  \Map(\widetilde{\rho_X}, F, \delta_1, \sigma)$ as $\sigma$ is good enough. The desired containment then follows.
\end{proof}

Throughout the rest of this paper, $\Gamma$ will consistently denote a countable sofic group associated with a sofic approximation $\Sigma$.

\subsection{$G$-extensions}

\begin{definition}{\cite[Page 410]{B71}} \label{G-extension}
Let $\pi\colon X \to Y$ be a factor map and $\Gamma \curvearrowright G$ be another dynamical system. We say $X$ is a {\it $G$-extension of $Y$}, and $\pi$ is a {\it $G$-extension map}, if there exists a continuous map $ X \times G \to X$ sending $(x, g)$ to $xg$ such that for each $x\in X, g, g' \in G$ and $s \in \Gamma$, the following holds:
\begin{enumerate}
    \item $\pi^{-1}(\pi(x))=xG$;
    \item $xg=xg'$ exactly when $g=g'$;
    \item $s(xg)=(sx)(sg)$.
\end{enumerate}
\end{definition}
Note that when $G$ is a group and the action  $\Gamma \curvearrowright G$ is trivial, the factor map $\pi$ recovers as a so-called {\it principal group extension}.

\begin{example}  \label{G-example}
\begin{enumerate}
    \item Projection maps inducing simple $G$-extensions: Let $\Gamma \curvearrowright Y$ and $\Gamma \curvearrowright Z$ be two dynamical systems. The projection map from the product space $X:=Y\times Z$ carrying the diagonal action induces a simple $G$-extension via the map $ X \times G \to X$ sending $((y, z), g)$ to $(y, g)$ for every $(y,z) \in X=Y\times Z$ and $g \in G=Z$;\\
    \item $G$-extensions through cocycles functions: Let $\Gamma \curvearrowright Y$ and $\Gamma \curvearrowright G$ be two dynamical systems such that $G$ is a compact group and $\Gamma$ acts on $G$ by continuous automorphisms. A (continuous) {\it cocycle} for $\Gamma \curvearrowright Y$ and $\Gamma \curvearrowright G$ is  a continuous map $\sigma\colon \Gamma \times Y \to G$ defined by the relation  
$$\sigma(st, y)=\sigma(s, ty)\cdot s(\sigma(t, y))$$
for every $s, t \in \Gamma$ and $y \in Y$.
This map induces an action of $\Gamma$ on $Y \times G$ by
$$s(y, g):=(sy, \sigma(s,y)\cdot (sg))$$
for all $s \in \Gamma, y \in Y$ and $g\in G$.
Consequently, $Y\times G$ becomes a $G$-extension of $Y$ via the map $ (Y \times G) \times G \to Y\times G$ sending $((y, g), h)$ to $(y, gh)$. We denote this $G$-extension derived from such a cocycle $\sigma$ as $Y\times_\sigma G$;\\
   \item $G$-extensions from algebraic actions: Another notable class of $G$-extension arises from algebraic actions. 
Recall that a dynamical system $\Gamma \curvearrowright X$ is called an {\it algebraic action} if $X$ is a compact metrizable group and $\Gamma$ acts on $X$ by continuous automorphisms. Let $\pi\colon X \to Y$ be a factor map between algebraic actions such that $\pi$ is a group homomorphism. Put $G=\ker (\pi)$. In this context, $X$ stands as a $G$-extension given via the map sending $(x, g) \in X \times G$ to $xg$ as the multiplication of group elements.
\end{enumerate}
\end{example}

\section{A reformulation of conditional mean dimension}
\numberwithin{equation}{section}
\setcounter{equation}{0}

In this section, we introduce a variant of the conditional mean dimension for actions of amenable groups. Subsequently, we establish its equivalence with the conventional conditional mean dimension.  This equivalence will lay the groundwork for the subsequent exploration of conditional mean dimension in the context of sofic groups in Section 4. Throughout this section $\Gamma$ denotes a countable amenable group.

Let $X, P$ denote two compact metrizable spaces and $\rho_X$ a continuous pseudometric on $X$. Fix $\varepsilon > 0$. Recall that a continuous map $f \colon X \to P$ is an {\it $(\varepsilon, \rho_X)$-embedding} if for every $x, x' \in X$ with $f(x)=f(x')$, one has $\rho_X(x, x') < \varepsilon$. Building upon this notion, we introduce two relative variants of this notion.

\begin{definition}
Let  $\pi \colon X \to Y$ be a  factor map between two dynamical systems. Consider $\rho_Y$ as a continuous pseudometric on $Y$ and fix $\theta > 0$.   We say a continuous map $f \colon X \to P$ is an {\it $(\varepsilon, \theta, \rho_X|\rho_Y)$-embedding (with respect to $\pi$)} if 
$$\rho_X(x, x') < \varepsilon$$
for every $x, x' \in X$ satisfying that $f(x)=f(x')$ and $\rho_Y(\pi(x), \pi(x')) \leq \theta$.
We call  $f$ is an {\it $(\varepsilon, \rho_X|Y)$-embedding} if $\rho_X(x, x') < \varepsilon$ holds for every $x, x' \in X$ satisfying that $f(x)=f(x')$ and $\pi(x)=\pi(x')$. 

Denote by $\Wdim_\varepsilon(\rho_X|\rho_Y, \theta)$ the minimal (covering) dimension $\dim (P)$ of a compact metrizable space $P$ that admits an $(\varepsilon, \theta, \rho_X|\rho_Y)$-embedding from $X$ to $P$. Similarly,, denote by $\Wdim_\varepsilon(\rho_X|Y)$ the minimal (covering) dimension $\dim (Q)$ of a compact metrizable space $Q$ that admits an $(\varepsilon, \rho_X|Y)$-embedding from $X$ to $Q$. When Y is a singleton, we simplify $\Wdim_\varepsilon(\rho_X|Y)$ to $\Wdim_\varepsilon(\rho_X)$.
\end{definition}

For each $F \in \cF(\Gamma)$ it induces a continuous pseudometric $\rho_{X, F}$ on $X$ via
$$\rho_{X, F}(x, x'):=\max_{s \in F} \rho_X(sx, sx').$$
Consider the function sending $F \in \cF(\Gamma)$ to $\varphi_\theta(F):=\Wdim_\varepsilon(\rho_{X, F}|\rho_{Y, F}, \theta)$. It can be readily verified that $\varphi_\theta$ satisfies the conditions of Lemma \ref{OW}, leading to the existence of the limit 
$$\mdim_\varepsilon(\rho_X|\rho_Y, \theta) :=\lim_F\frac{\Wdim_\varepsilon(\rho_{X, F}|\rho_{Y, F}, \theta)}{|F|}$$
exists\footnote{note that $\varphi_\theta$ may fail the monotonicity!}.
Observe that $\mdim_\varepsilon(\rho_X|\rho_Y, \theta)$ is monotone in variables $\varepsilon$ and $\theta$ respectively. Set
$$\mdim_\varepsilon(\rho_X|\rho_Y):=\inf_{\theta > 0} \mdim_\varepsilon(\rho_X|\rho_Y, \theta).$$

Meanwhile, the function $\cF(\Gamma) \to [0, \infty)$ sending $F$ to $\Wdim_\varepsilon(\rho_{X, F}|Y)$ satisfies the conditions of Lemma \ref{OW} as well and we set 
$$\mdim_\varepsilon(\rho_X|Y) :=\lim_F\frac{\Wdim_\varepsilon(\rho_{X, F}|Y)}{|F|}$$
When Y is a singleton, we simplify $\mdim_\varepsilon(\rho_X|Y)$ to $\mdim_\varepsilon(\rho_X)$.

\begin{definition} \label{amenable definition}
    For any continuous pseudometrics $\rho_X$ and $\rho_Y$ on $X$ and $Y$ respectively,  we define 
    $$\mdim(\rho_X|\rho_Y):=\sup_{\varepsilon > 0} \mdim_\varepsilon(\rho_X|\rho_Y)$$
    and 
      $$\mdim(\rho_X|Y):=\sup_{\varepsilon > 0} \mdim_\varepsilon(\rho_X|Y).$$

 When $\rho_X$ and $\rho_Y$ are compatible metrics the {\it conditional mean dimension $\mdim(X|Y)$ of $\Gamma \curvearrowright X$ relative to $\Gamma \curvearrowright Y$}  is defined as the value $\mdim(\rho_X|Y)$ (see \cite[Definition 2.3, Proposition 2.5]{L22}).
By compactness of $X$, we see that $\mdim(\rho_X|Y)$ is independent of the choice of compatible metric $\rho_X$.

 Meanwhile, by compactness of $X$ and $Y$, the variant $\mdim(\rho_X|\rho_Y)$ is independent of the choice of compatible metrics $\rho_X$ and $\rho_Y$.  
When $Y$ reduces to a singleton, $\mdim(X|Y)$ recovers as the {\it  mean dimension $\mdim(X)$ of $\Gamma \curvearrowright X$} (see \cite[Section 1.5]{G99m}). To emphasize the metric we may write $\mdim(\rho_X)$ for $\mdim(X)$.
\end{definition}

For a closed subset $K$ (not necessarily $\Gamma$-invariant) of $X$, in light of  (\ref{limsup}), we can define its mean dimension by taking limit supremum over the net of asymptotically invariant finite subsets $F$ of $\Gamma$.

\begin{definition} \label{closed mdim}
   Fix a compatible metric $\rho_X$ on $X$. The {\it mean dimension of a closed subset $K$} of $X$ is defined as
    $$\mdim(K):=\sup_{\varepsilon > 0} \limsup_F \frac{\Wdim_\varepsilon(K, \rho_{X, F})}{|F|}.$$
\end{definition}

Taking advantage of quasi-tiling property of amenable groups, we establish the following equivalence, adapting the argument presented in  \cite[Theorem 1.2]{L22}. 

\begin{proposition}\label{alternative}
Let $\Gamma$ be an amenable group, and consider $\rho_X$ and $\rho_Y$ compatible metrics on $X$ and $y$ respectively. Then for any factor map $\pi \colon X \to Y$, we have $\mdim(\rho_X|Y)=\mdim(\rho_X|\rho_Y).$
\end{proposition}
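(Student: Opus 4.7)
The easy direction $\mdim(\rho_X|\rho_Y) \geq \mdim(\rho_X|Y)$ is immediate: if $f$ is an $(\varepsilon, \theta, \rho_X|\rho_Y)$-embedding, then for any $x, x'$ with $f(x) = f(x')$ and $\pi(x) = \pi(x')$ one has $\rho_Y(\pi(x), \pi(x')) = 0 \leq \theta$, forcing $\rho_X(x, x') < \varepsilon$, so $f$ is also an $(\varepsilon, \rho_X|Y)$-embedding. Hence $\Wdim_\varepsilon(\rho_{X,F}|\rho_{Y,F}, \theta) \geq \Wdim_\varepsilon(\rho_{X,F}|Y)$ for every $F$ and $\theta$; applying $\lim_F/|F|$, then $\inf_\theta$, and finally $\sup_\varepsilon$ gives the desired inequality.

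For the harder inequality $\mdim(\rho_X|\rho_Y) \leq \mdim(\rho_X|Y)$, my first step is a scale-comparison lemma: for every $\varepsilon > 0$ and $F_0 \in \cF(\Gamma)$ there exists $\theta = \theta(\varepsilon, F_0) > 0$ such that every $(\varepsilon/2, \rho_{X, F_0}|Y)$-embedding $f \colon X \to P$ is automatically an $(\varepsilon, \theta, \rho_{X, F_0}|\rho_{Y, F_0})$-embedding. The argument is compactness plus contradiction: failure at scales $\theta_n \downarrow 0$ would produce pairs $(x_n, x_n')$ with $f(x_n) = f(x_n')$, $\rho_{Y, F_0}(\pi x_n, \pi x_n') \to 0$ and $\rho_{X, F_0}(x_n, x_n') \geq \varepsilon$; passing to a subsequence and using that $\rho_Y$ is a compatible metric (so $\rho_Y(y, y') = 0$ forces $y = y'$), one obtains a limit pair violating the hypothesis on $f$.

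Now fix $\varepsilon > 0$ and small parameters $\eta, \tau > 0$. I would use the classical amenable quasi-tiling theorem to choose highly invariant tiles $F_1, \dots, F_\ell$ with $\Wdim_{\varepsilon/2}(\rho_{X, F_k}|Y)/|F_k| \leq \mdim_{\varepsilon/2}(\rho_X|Y) + \eta$, set $\theta := \min_k \theta(\varepsilon, F_k) > 0$ via the scale lemma, and fix optimal embeddings $f_k \colon X \to P_k$ realizing $\Wdim_{\varepsilon/2}(\rho_{X, F_k}|Y)$. For a sufficiently invariant $F$, quasi-tiling produces disjoint translates $F_k c$ ($c \in C_k$) whose union $F' := \bigsqcup_k F_k C_k \subseteq F$ satisfies $|F \setminus F'| \leq \tau |F|$. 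The translated maps $g_{k,c}(x) := f_k(cx)$ assemble into a product $G \colon X \to \prod_{k,c} P_k$ which, using the identity $\rho_{Z, F_k c}(z, z') = \rho_{Z, F_k}(cz, cz')$ for $Z \in \{X, Y\}$ together with the scale lemma, is an $(\varepsilon, \theta, \rho_{X, F'}|\rho_{Y, F'})$-embedding of dimension at most $\sum_k |C_k|\,\Wdim_{\varepsilon/2}(\rho_{X, F_k}|Y)$. To patch the residue $F \setminus F'$, take an unconditional $(\varepsilon, \rho_X)$-embedding $h \colon X \to Q$ with $\dim Q = \Wdim_\varepsilon(\rho_X) < \infty$, translate by each $t \in F \setminus F'$, and take the product $H$; this is automatically an $(\varepsilon, \theta, \rho_{X, F\setminus F'}|\rho_{Y, F\setminus F'})$-embedding. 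Splitting the condition $\rho_{Y, F}(\pi x, \pi x') \leq \theta$ as $\rho_{Y, F'}(\pi x, \pi x')\leq\theta$ together with $\rho_{Y, F\setminus F'}(\pi x, \pi x')\leq\theta$ shows that $(G, H)$ is an $(\varepsilon, \theta, \rho_{X, F}|\rho_{Y, F})$-embedding, yielding
\[
\frac{\Wdim_\varepsilon(\rho_{X, F}|\rho_{Y, F}, \theta)}{|F|} \leq \mdim_{\varepsilon/2}(\rho_X|Y) + \eta + \tau \,\Wdim_\varepsilon(\rho_X).
\]
Passing to $\lim_F$, combining with $\mdim_\varepsilon(\rho_X|\rho_Y) \leq \mdim_\varepsilon(\rho_X|\rho_Y, \theta)$, and letting $\eta, \tau \to 0$, one obtains $\mdim_\varepsilon(\rho_X|\rho_Y) \leq \mdim_{\varepsilon/2}(\rho_X|Y) \leq \mdim(\rho_X|Y)$; a final $\sup_\varepsilon$ completes the proof.

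The main obstacle is that the threshold $\theta$ supplied by the scale lemma depends on the underlying finite set $F_0$, so one cannot fix $\theta$ first and then quasi-tile; the tiles $F_1, \dots, F_\ell$ must be chosen before $\theta$ is defined. This constraint, together with the non-monotonicity of $\varphi_\theta$ noted in the footnote to Definition \ref{amenable definition}, is precisely why the residual set $F \setminus F'$ must be absorbed by the \emph{unconditional} quantity $\Wdim_\varepsilon(\rho_X)$ (finite because $X$ is compact metric) rather than by a conditional counterpart whose scale parameter we cannot control uniformly; this error term is then killed by taking $\tau$ small in the quasi-tiling.
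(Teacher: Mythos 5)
Your proof is correct and follows essentially the same route as the paper's: quasi-tile a highly invariant set, use compactness to upgrade the fixed fiberwise embeddings $f_k$ on each tile to $(\varepsilon,\theta,\cdot\,|\,\cdot)$-embeddings, take the product over the translates, and absorb the untiled remainder via the absolute quantity $\Wdim_\varepsilon(\rho_X)$. One cosmetic caveat: your scale-comparison lemma should be read per embedding (the threshold $\theta$ depends on $f$ as well as on $(\varepsilon,F_0)$, and the uniform version over all embeddings is not what your compactness argument proves), but this is all your application needs since the $f_k$ are fixed before $\theta$ is chosen.
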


\begin{proof}
The direction that $\mdim(\rho_X|Y) \leq \mdim(\rho_X|\rho_Y)$ is clear by definition. We shall show the converse. For every $\varepsilon > 0$ it suffices to prove 
$$\mdim_\varepsilon(\rho_X|\rho_Y) \leq \mdim_\varepsilon(\rho_X|Y).$$

Denote by $\cB(K, \delta)$ the set of all $F \in \cF(\Gamma)$ satisfying $|\{t \in F: Kt \subseteq F\}| \geq (1-\delta)|F|$. Let $\tau > 0$. Choose $K \in \cF(\Gamma)$ and $\tau > \varepsilon_0 > 0$ good enough such that whenever $F \in \cB(K, \varepsilon_0)$ one has 
\begin{equation} \label{sufficient approximate}
\Wdim_\varepsilon(\rho_{X, F}|Y) \leq (\mdim_\varepsilon(\rho_X|Y)+\tau)|F|.
\end{equation}
Applying the quasi-tiling lemma \cite[Lemma 4.4]{LT14} to such $\varepsilon_0$ and $K$, there exist $\delta > 0, K' \in \cB(\Gamma)$, and $F_1, \dots, F_m \in \cB(K, \varepsilon_0)$ satisfying the property: for every $A \in \cB(K', \delta)$, there exist $D_1, \dots, D_m \in \cF(\Gamma)$ such that $\{F_jc_j\}_{1\leq j \leq m, c_j \in D_j}$ are pairwise disjoint subsets of $A$ and 
\begin{equation} \label{small reminder}
    |A\setminus \cup_{j=1}^m F_jD_j| \leq \varepsilon_0|A|.
\end{equation}

For each $j=1, \dots, m$ choose $f_j \colon X \to P_j$ as an $(\varepsilon|Y, \rho_{X, F_j})$-embedding with $\dim P_j=\Wdim_\varepsilon(\rho_{X, F_j}|Y)$. By compactness there exists $\theta > 0$ depending only on $\{F_j\}_{j=1}^m$ such that for each $j=1, \dots, m$ and $x, x' \in X$, whenever $\rho_{Y, F_j}(\pi(x), \pi(x')) \leq \theta$ and $f_j(x)=f_j(x')$, one has 
$\rho_{X, F_j}(x, x') < \varepsilon$. That is, each $f_j$ is an $(\varepsilon, \theta, \rho_{X, F_j}|\rho_{Y, F_j})$-embedding. 

Now consider the continuous map $\Phi \colon X \to \Pi_{j=1}^m P_j^{D_j}$ sending $x$ to $((f_j(rx))_{r \in D_j})_{1\leq j \leq m}$.
Denote by $A^\circ$ the set $\cup_{j=1}^m F_jD_j$.
By our choice of $\theta$ it is straightforward to verify that  $\Phi$ is an $(\varepsilon, \theta, \rho_{X, A^\circ}|\rho_{Y, A^\circ})$-embedding. Then, by \eqref{sufficient approximate}, we have
\begin{align*}
    \Wdim_\varepsilon(\rho_{X, A^\circ}|\rho_{Y, A^\circ}, \theta) & \leq \dim (\Pi_{j=1}^m P_j^{D_j})\\
    & \leq \sum_{j=1}^m |D_j||F_j|\frac{\Wdim_\varepsilon(\rho_{X, F_j}|Y)}{|F_j|} \\
    &\leq |A|(\mdim_\varepsilon(\rho_X|Y)+\tau). 
\end{align*}
Denote by $B$ the subtraction $A \setminus A^\circ$.
From \eqref{small reminder}, we obtain
\begin{align*}
    \Wdim_\varepsilon(\rho_{X, A}|\rho_{Y, A}, \theta) & \leq \Wdim_\varepsilon(\rho_{X, A^\circ}|\rho_{Y, A^\circ}, \theta) + \Wdim_\varepsilon(\rho_{X, B}|\rho_{Y, B}, \theta) \\
    & \leq |A|(\mdim_\varepsilon(\rho_X|Y)+\tau) +|B| \Wdim_\varepsilon(\rho_X|\rho_Y, \theta) \\
    & \leq |A|(\mdim_\varepsilon(\rho_X|Y)+\tau) +\tau|A|\Wdim_\varepsilon(\rho_X)
\end{align*}
Since $A \in \cB(K', \delta)$ is arbitrary, we have
$$\mdim_\varepsilon(\rho_X|\rho_Y) \leq \mdim_\varepsilon(\rho_X|\rho_Y, \theta) \leq \mdim_\varepsilon(\rho_X|Y)+\tau+ \tau\Wdim_\varepsilon(\rho_X).$$
Letting $\tau$ tend to zero, it follows that $\mdim_\varepsilon(\rho_X|\rho_Y) \leq \mdim_\varepsilon(\rho_X|Y).$

\end{proof}

Taking advantage of Proposition \ref{alternative}, we can present a concise proof of \cite[Theorem 1.2]{L22}.
\begin{corollary}
For any factor map $\pi \colon X \to Y$, we have
$$\mdim(X) \leq \mdim(Y) +\mdim(X|Y).$$
\end{corollary}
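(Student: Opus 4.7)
The plan is to combine Proposition~\ref{alternative} with a single finite-level width-dimension estimate. First, I would fix compatible metrics $\rho_X$ on $X$ and $\rho_Y$ on $Y$ and apply Proposition~\ref{alternative} to rewrite $\mdim(X|Y) = \mdim(\rho_X|\rho_Y)$, reducing the claim to the inequality $\mdim(\rho_X) \leq \mdim(\rho_Y) + \mdim(\rho_X|\rho_Y)$. The heart of the matter is then to establish, for every $\varepsilon, \theta > 0$ and every $F \in \cF(\Gamma)$, the finite-level bound
$$\Wdim_\varepsilon(\rho_{X,F}) \leq \Wdim_\varepsilon(\rho_{X,F}|\rho_{Y,F}, \theta) + \Wdim_\theta(\rho_{Y,F}).$$

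To prove this bound, I would choose a compact metrizable space $P$ with $\dim P = \Wdim_\varepsilon(\rho_{X,F}|\rho_{Y,F}, \theta)$ admitting an $(\varepsilon, \theta, \rho_{X,F}|\rho_{Y,F})$-embedding $f \colon X \to P$, and a compact metrizable space $Q$ with $\dim Q = \Wdim_\theta(\rho_{Y,F})$ admitting a $(\theta, \rho_{Y,F})$-embedding $g \colon Y \to Q$. Then the product map $\Psi := (f, g\circ \pi) \colon X \to P \times Q$ is an $(\varepsilon, \rho_{X,F})$-embedding: if $\Psi(x) = \Psi(x')$, then $g(\pi(x)) = g(\pi(x'))$ forces $\rho_{Y,F}(\pi(x), \pi(x')) < \theta$, and combined with $f(x) = f(x')$, the relative embedding property of $f$ yields $\rho_{X,F}(x,x') < \varepsilon$. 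Since $\dim(P \times Q) \leq \dim P + \dim Q$ holds for compact metrizable spaces, the bound follows.

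Dividing both sides by $|F|$ and passing to the limit along the asymptotically invariant finite subsets (each term exists by Lemma~\ref{OW}), I obtain
$$\mdim_\varepsilon(\rho_X) \leq \mdim_\varepsilon(\rho_X|\rho_Y, \theta) + \mdim_\theta(\rho_Y).$$
Since $\mdim_\theta(\rho_Y) \leq \mdim(\rho_Y) = \mdim(Y)$ for every $\theta > 0$, taking the infimum over $\theta$ on the right, then the supremum over $\varepsilon$, and finally invoking Proposition~\ref{alternative} yields the corollary. I do not expect a serious obstacle here: the argument is structurally a subadditive $\varepsilon$-width estimate followed by a passage to the limit, and the relative embedding is engineered exactly so that this composition goes through. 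The only bookkeeping point is that the $(\theta, \rho_{Y,F})$-embedding property of $g$ delivers the strict inequality $\rho_{Y,F}(\pi(x),\pi(x')) < \theta$, which in particular satisfies the $\leq \theta$ hypothesis needed to trigger the relative embedding property of $f$.
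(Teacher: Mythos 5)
Your proposal is correct and follows essentially the same route as the paper: reduce via Proposition~\ref{alternative} to comparing $\mdim_\varepsilon(\rho_X)$ with $\mdim_\varepsilon(\rho_X|\rho_Y,\theta)+\mdim_\theta(\rho_Y)$, prove the finite-level width bound by the product map $x \mapsto (f(x), g\circ\pi(x))$, and then pass to the invariance limit and let $\theta \to 0$. The final bookkeeping point you raise (strict $<\theta$ implying the $\leq\theta$ hypothesis) is exactly the observation the paper's argument relies on.
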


\begin{proof}
Fix  some compatible metrics $\rho_X, \rho_Y$ on $X$ and $Y$ respectively. In light of Proposition \ref{alternative}, it is equivalent to show for every $\varepsilon > 0$ one has
$$\mdim_\varepsilon(\rho_X) \leq \mdim_\varepsilon(\rho_X|\rho_Y) + \mdim(\rho_Y).$$

  Fix  $\varepsilon, \theta > 0$ and $F  \in \cF(\Gamma)$. Suppose that $f \colon X \to P$ is an $(\varepsilon, \theta, \rho_{X, F}|\rho_{Y, F})$-embedding, and $g \colon Y \to Q$ is an $(\varepsilon, \rho_{Y, F})$-embedding.  Consider the continuous map $\varphi \colon X \to P\times Q$ sending $x$ to $(f(x), g\circ \pi(x))$. For every $x, x' 
\in X$ with $\varphi(x)=\varphi(x')$, since $g$ is a $(\theta, \rho_{Y, F})$-embedding, we obtain $\rho_{Y, F}(\pi(x), \pi(x')) < \theta$. Since $f$ is an $(\varepsilon, \theta, \rho_{X, F}|\rho_{Y, F})$-embedding, we morerover obtain that $\rho_{X, F}(x, x') < \varepsilon$. This concludes that $\varphi$ is an $(\varepsilon, \rho_{X, F})$-embedding and hence
$$\Wdim_\varepsilon(\rho_{X, F}) \leq \Wdim_\varepsilon(\rho_{X, F}|\rho_{Y, F}, \theta) + \Wdim_\theta(\rho_{Y, F}).$$
   
   Now as $F$ becomes more and more left-invariant, we get
$$\mdim_\varepsilon(\rho_X) \leq \mdim_\varepsilon(\rho_X|\rho_Y, \theta) +\mdim_\theta(\rho_Y).$$
Letting $\theta$ approach zero, the desired inequality follows. 

\end{proof}

Recently Tsukamoto constructed a factor map of $\bZ$-actions indicating that a dynamical version of Hurewicz's inequality fails \cite[Theorem 1.3]{T22}. In light of this example and our positive estimation for the mean dimension of the extension system, it concludes that the difference between $\mdim(X|Y)$ and $\sup_{y \in Y} \mdim(\pi^{-1}(y))$ can be arbitrarily large.

To end this section, we discuss some reduction properties of conditional mean dimension. It turns out that the computation of $\mdim(X|Y)$ can be streamlined by considering $\mdim(\rho_X|\rho_Y)$, provided that both $\rho_X$ and $\rho_Y$ are dynamically generating.   

\begin{proposition} 
Let  $\rho_X$ and $\rho_X'$ be two continuous pseudometrics on $X$, and $\rho_Y$ a continuous pseudometrics on $Y$. If both $\rho_X$ and $\rho_X'$ are dynamically generating, then for any factor map $\pi \colon X \to Y$, we have
$$\mdim(\rho_X|\rho_Y)=\mdim(\rho_X'|\rho_Y).$$
In particular, we have $\mdim(X|Y)=\mdim(\rho_X|\rho_Y)$ if $\rho_Y$ is a compatible metric.
\end{proposition}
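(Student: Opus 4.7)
My plan is to reduce the proposition to showing $\mdim(\rho_X|\rho_Y) \leq \mdim(\rho_X'|\rho_Y)$, with the reverse following by symmetry; the ``in particular'' assertion will then fall out by applying the main equality with $\rho_X'$ chosen as a compatible metric on $X$ and invoking Proposition~\ref{alternative}. The strategy is to transfer, for each $F \in \cF(\Gamma)$, an $(\varepsilon', \theta', \rho_{X, F_0 F}'|\rho_{Y, F_0 F})$-embedding on a slightly enlarged window $F_0 F$ into an $(\varepsilon, \theta, \rho_{X, F}|\rho_{Y, F})$-embedding on $F$, using dynamical generation of $\rho_X'$ to compare $\rho_X$ with translates of $\rho_X'$, and using uniform continuity of the $\Gamma$-action on $Y$ to reconcile the two $\rho_Y$-conditions that appear.

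First I would fix $\varepsilon > 0$ and extract the needed comparison: since $\rho_X'$ is dynamically generating, the compatible metric $\widetilde{\rho_X'}$ from \eqref{compatible induced} generates the topology of $X$, so the continuous pseudometric $\rho_X$ is uniformly continuous with respect to $\widetilde{\rho_X'}$. Truncating the defining series then produces a finite set $F_0 \in \cF(\Gamma)$ with $e_\Gamma \in F_0$ and some $\varepsilon' > 0$ such that $\rho_{X, F_0}'(x, x') < \varepsilon'$ implies $\rho_X(x, x') < \varepsilon$. For any prescribed $\theta' > 0$, compactness of $Y$ and continuity of each $s \in F_0$ as a self-map of $Y$ provide a common modulus $\theta > 0$ satisfying $\rho_Y(y, y') \leq \theta \Rightarrow \rho_Y(sy, sy') \leq \theta'$ for every $s \in F_0$.

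The core step is to verify that for every $F \in \cF(\Gamma)$, any $(\varepsilon', \theta', \rho_{X, F_0 F}'|\rho_{Y, F_0 F})$-embedding of $X$ into a compact metrizable space is automatically an $(\varepsilon, \theta, \rho_{X, F}|\rho_{Y, F})$-embedding. If such a map sends $x, x'$ to the same point and $\rho_{Y, F}(\pi(x), \pi(x')) \leq \theta$, then for every $s \in F$ and $t \in F_0$ the modulus gives $\rho_Y(ts\pi(x), ts\pi(x')) \leq \theta'$, hence $\rho_{Y, F_0 F}(\pi(x), \pi(x')) \leq \theta'$; the embedding hypothesis then forces $\rho_{X, F_0 F}'(x, x') < \varepsilon'$, and since $F_0 s \subseteq F_0 F$ for every $s \in F$, this yields $\rho_{X, F_0}'(sx, sx') < \varepsilon'$, hence $\rho_X(sx, sx') < \varepsilon$, so $\rho_{X, F}(x, x') < \varepsilon$. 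Consequently
\[
\Wdim_\varepsilon(\rho_{X, F}|\rho_{Y, F}, \theta) \leq \Wdim_{\varepsilon'}(\rho_{X, F_0 F}'|\rho_{Y, F_0 F}, \theta').
\]

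Dividing by $|F|$ and using $|F_0 F|/|F| \to 1$ as $F$ becomes left-invariant (amenability), together with Lemma~\ref{OW}, this turns into $\mdim_\varepsilon(\rho_X|\rho_Y, \theta) \leq \mdim_{\varepsilon'}(\rho_X'|\rho_Y, \theta')$. Sending $\theta' \to 0^+$ (which drives $\theta \to 0^+$) and then $\varepsilon \to 0^+$ (with $\varepsilon' \to 0^+$) delivers $\mdim(\rho_X|\rho_Y) \leq \mdim(\rho_X'|\rho_Y)$. The main technical obstacle I anticipate, and the reason the relative case does not drop out of the corresponding absolute statement, is the mismatch between the two filter conditions $\rho_{Y, F}(\pi(x), \pi(x')) \leq \theta$ and $\rho_{Y, F_0 F}(\pi(x), \pi(x')) \leq \theta'$: since $F \subseteq F_0 F$, the former is a priori weaker than the latter, and repairing this asymmetry is precisely the role of uniform continuity of the finitely many maps $\{s : s \in F_0\}$ acting on $Y$.
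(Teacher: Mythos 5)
You have correctly isolated the crux of the problem --- the mismatch between the hypothesis $\rho_{Y,F}(\pi(x),\pi(x'))\leq\theta$ available on the small window and the hypothesis $\rho_{Y,F_0F}(\pi(x),\pi(x'))\leq\theta'$ needed to invoke the embedding on the enlarged window --- but the tool you use to repair it does not exist under the stated hypotheses. The proposition assumes only that $\rho_Y$ is a continuous \emph{pseudometric} on $Y$; it need not be compatible, nor even dynamically generating. For such a $\rho_Y$ there is in general no $\theta>0$ with $\rho_Y(y,y')\leq\theta\Rightarrow\rho_Y(sy,sy')\leq\theta'$: take $Y=\{0,1\}^{\bZ}$ with the shift, $\rho_Y(y,y')=|y_0-y_0'|$, and $s$ the generator; then $y$ the zero sequence and $y'$ the sequence with a single $1$ in coordinate $1$ satisfy $\rho_Y(y,y')=0$ while $\rho_Y(sy,sy')=1$. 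Uniform continuity of the maps $s\colon Y\to Y$ gives you control of $\widetilde{\rho_Y}(sy,sy')$ in terms of $\widetilde{\rho_Y}(y,y')$ for a compatible metric, but smallness of the pseudometric $\rho_Y(y,y')$ carries no topological information about the pair $(y,y')$, so it cannot control $\rho_Y(sy,sy')$. Your argument therefore only proves the proposition when $\rho_Y$ is a compatible metric (the ``in particular'' case), not the general statement.

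The paper closes this gap by a different device: rather than trying to \emph{deduce} the condition $\rho_{Y,KF\setminus F}(\pi(x),\pi(x'))\leq\theta$ from the given one, it \emph{enforces} it, by augmenting the embedding with an auxiliary $(\theta,\rho_{Y,KF\setminus F})$-embedding $g\colon Y\to Q$ and using the map $x\mapsto(f(x),g(\pi(x)))$. Then $g(\pi(x))=g(\pi(x'))$ forces $\rho_{Y,KF\setminus F}(\pi(x),\pi(x'))<\theta$, which together with $\rho_{Y,F}(\pi(x),\pi(x'))\leq\theta$ yields the hypothesis on the full window $KF$, with no continuity assumption on $\rho_Y$ whatsoever. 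The price is the extra factor $\dim Q\leq\Wdim_\theta(\rho_{Y,KF\setminus F})\leq|KF\setminus F|\,\Wdim_\theta(\rho_Y)$, which vanishes after dividing by $|F|$ and letting $F$ become invariant --- exactly the same bookkeeping you already perform for the $|F_0F|/|F|\to1$ term. If you insert this auxiliary factor into your construction (and run both windows with the single threshold $\theta$, as the paper does), the rest of your argument --- the choice of $F_0,\varepsilon'$ from \eqref{compatible induced}, the inclusion $F_0s\subseteq F_0F$, and the passage to the limit via Lemma~\ref{OW} --- goes through.
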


\begin{proof}
    By symmetry it suffices to show $\mdim_\varepsilon(\rho_X'|\rho_Y) \leq \mdim(\rho_X|\rho_Y)$ for every $\varepsilon > 0$.
Given that $\rho_X$ is dynamically generating, there exist $K \in \cF(\Gamma)$ containing $e_\Gamma$ and $\varepsilon' > 0$ such that whenever $\rho_{X, K}(x, x') <\varepsilon'$ one has $\rho_X'(x, x') < \varepsilon$.
Let $f \colon X \to P$ be an $(\varepsilon', \theta, \rho_{X, KF}|\rho_{Y, KF})$-embedding and $g \colon Y \to Q$  a $(\theta, \rho_{Y, KF\setminus F})$-embedding. Define a continuous map $\Phi \colon X \to P\times Q$ sending $x$ to $(f(x), g\circ \pi(x))$. From the choices of $f$ and $g$ one can readily deduce that $\Phi$ is an $(\varepsilon, \theta, \rho_{X, F}'|\rho_{Y, F})$-embedding. Therefore,
\begin{align*}
   &\Wdim_\varepsilon(\rho_{X, F}'|\rho_{Y, F}, \theta) \\
  & \leq \Wdim_{\varepsilon'}(\rho_{X, KF}|\rho_{Y, KF}, \theta) + \Wdim_\theta(\rho_{Y, KF\setminus F})\\
  & \leq \Wdim_{\varepsilon'}(\rho_{X, F}|\rho_{Y, F}, \theta) + |KF\setminus F|(\Wdim_{\varepsilon'}(\rho_X|\rho_Y, \theta) + \Wdim_\theta(\rho_Y)).
\end{align*}

Dividing both sides by $|F|$ and let $F$ get more and more left-invariant, it concludes that
$$\mdim_\varepsilon(\rho'_X|\rho_Y, \theta) \leq \mdim_{\varepsilon'}(\rho_X|\rho_Y, \theta).$$
Letting $\theta$ tend to zero, we obtain
$$\mdim_\varepsilon(\rho_X'|\rho_Y) \leq \mdim_{\varepsilon'}(\rho_X|\rho_Y) \leq \mdim(\rho_X|\rho_Y).$$
 
\end{proof}

\begin{proposition} \label{amenable reduction}
Fix a continuous pseudometric $\rho_X$ on $X$. Let $\rho_Y$ and $\rho_Y'$ be two continuous pseudometrics on $X$  which are dynamically generating. Then, for any factor map $\pi \colon X \to Y$, we have
$$\mdim(\rho_X|\rho_Y)=\mdim(\rho_X|\rho_Y').$$
In particular, we have $\mdim(X|Y)=\mdim(\rho_X|\rho_Y)$ if $\rho_X$ is dynamically generating.
\end{proposition}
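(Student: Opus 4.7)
The plan is to mirror the argument of the preceding proposition, now exploiting the dynamical generation of the pseudometrics on $Y$ rather than on $X$. By the symmetric shape of the claim, it suffices to prove $\mdim_\varepsilon(\rho_X|\rho_Y)\leq \mdim_\varepsilon(\rho_X|\rho_Y')$ for every $\varepsilon>0$.

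First I would convert the dynamical generation of $\rho_Y$ into a shifted uniform control of $\rho_Y'$ by $\rho_Y$. Fix $\theta'>0$. The compatible metric $\widetilde{\rho_Y}$ from \eqref{compatible induced} induces the topology of $Y$, so the continuous pseudometric $\rho_Y'$ is uniformly continuous with respect to $\widetilde{\rho_Y}$. Truncating the defining series at a sufficiently large index produces $K'\in\cF(\Gamma)$ containing $e_\Gamma$ and $\theta_0>0$ such that $\rho_{Y,K'}(y,y')<\theta_0$ implies $\rho_Y'(y,y')<\theta'$. Since $K's\subseteq K'F$ for every $s\in F$, this propagates to the shifted implication that $\rho_{Y,K'F}(y,y')<\theta_0$ forces $\rho_{Y',F}(y,y')<\theta'$ for any $F\in\cF(\Gamma)$.

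Next, for each $F\in\cF(\Gamma)$ I pick $f\colon X\to P$ realising $\Wdim_\varepsilon(\rho_{X,F}|\rho_{Y',F},\theta')$ and $g\colon Y\to Q$ realising $\Wdim_{\theta_0/2}(\rho_{Y,K'F\setminus F})$, and form $\Phi:=(f,g\circ\pi)\colon X\to P\times Q$. Then $\Phi$ is an $(\varepsilon,\theta_0/2,\rho_{X,F}|\rho_{Y,F})$-embedding: given $\Phi(x)=\Phi(x')$ with $\rho_{Y,F}(\pi(x),\pi(x'))\leq\theta_0/2$, the embedding property of $g$ forces $\rho_{Y,K'F\setminus F}(\pi(x),\pi(x'))<\theta_0/2$, and taking the maximum with $\rho_{Y,F}$ gives $\rho_{Y,K'F}(\pi(x),\pi(x'))<\theta_0$; the preceding step then delivers $\rho_{Y',F}(\pi(x),\pi(x'))<\theta'$, and $f(x)=f(x')$ yields $\rho_{X,F}(x,x')<\varepsilon$. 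This produces the estimate
\[
\Wdim_\varepsilon(\rho_{X,F}|\rho_{Y,F},\theta_0/2)\leq \Wdim_\varepsilon(\rho_{X,F}|\rho_{Y',F},\theta')+|K'F\setminus F|\,\Wdim_{\theta_0/2}(\rho_Y).
\]
Dividing by $|F|$, letting $F$ become asymptotically invariant (so the boundary term dies), and finally taking $\theta'\to 0$ yields $\mdim_\varepsilon(\rho_X|\rho_Y)\leq \mdim_\varepsilon(\rho_X|\rho_Y')$; supremum over $\varepsilon$ together with symmetry closes the main equality.

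For the \emph{in particular} clause I would pick a compatible metric $\rho_Y^0$ on $Y$, which is automatically dynamically generating; the equality just proved then gives $\mdim(\rho_X|\rho_Y)=\mdim(\rho_X|\rho_Y^0)$, and the preceding proposition (applied with its compatible second argument $\rho_Y^0$ and dynamically generating $\rho_X$) identifies the latter with $\mdim(X|Y)$. The only real subtlety lies in the asymmetric direction in which dynamical generation of $\rho_Y$ is used---it produces $\rho_{Y,K'F}<\theta_0\Rightarrow\rho_{Y',F}<\theta'$ but not the reverse implication---which is precisely why the auxiliary embedding $g$ on $Y$ must be introduced to cover the ``boundary collar'' $K'F\setminus F$; everything else reduces cleanly to the Ornstein--Weiss-type limits already set up in Lemma \ref{OW}.
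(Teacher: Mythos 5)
Your argument is correct and is essentially the paper's own proof: the same symmetry reduction, the same use of dynamical generation of $\rho_Y$ to obtain $K'$ and $\theta_0$ with $\rho_{Y,K'F}<\theta_0\Rightarrow\rho_{Y',F}<\theta'$, the same auxiliary embedding $g$ on the collar $K'F\setminus F$ composed with $\pi$, and the same Ornstein--Weiss limit killing the boundary term. The only differences are cosmetic (the $\theta_0/2$ bookkeeping for strict inequalities and the explicit uniform-continuity justification via $\widetilde{\rho_Y}$, both of which the paper leaves implicit).
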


\begin{proof}
    Fix $\varepsilon, \theta > 0$. By symmetry it suffices to show $$\mdim_\varepsilon(\rho_X|\rho_Y) \leq \mdim_\varepsilon(\rho_X|\rho'_Y, \theta).$$
Since $\rho_Y$ is dynamically generating, there exist $K \in \cF(\Gamma)$ and $\theta' > 0$ such that whenever $\rho_{Y, K}(y, y') \leq \theta'$ one obtain $\rho_Y'(y, y') \leq \theta$.

Let $f \colon X \to P$ be an $(\varepsilon, \theta, \rho_{X, F}|\rho'_{Y, F})$-embedding and $g \colon Y \to Q$  a $(\theta', \rho_{Y, KF\setminus F}$-embedding. Then it is readily checked that the map $X \to P\times Q$ sending $x$ to $(f(x), g\circ \pi(x))$ is an $(\varepsilon, \theta', \rho_{X, F}|\rho_{Y, F})$-embedding. Therefore, we have
\begin{align*}
    \Wdim_\varepsilon(\rho_{X, F}|\rho_{Y, F}, \theta') 
    & \leq \Wdim_\varepsilon(\rho_{X, F}|\rho'_{Y, F}, \theta) + \Wdim_{\theta'}(\rho_{Y, KF\setminus F})\\
    & \leq \Wdim_\varepsilon(\rho_{X, F}|\rho'_{Y, F}, \theta) + | KF\setminus F|\Wdim_{\theta'}(\rho_{Y}).\\
\end{align*}
Dividing both sides by $|F|$ and let $F$ get more and more left-invariant, we conclude
$$\mdim_\varepsilon(\rho_X|\rho_Y) \leq \mdim_\varepsilon(\rho_X|\rho_Y, \theta') \leq \mdim_\varepsilon(\rho_X|\rho'_Y, \theta).$$
    
\end{proof}

\section{Conditional sofic mean dimension}
\numberwithin{equation}{section}
\setcounter{equation}{0}

In this section, we introduce the concepts of conditional mean dimension and its fiberwise variant for actions of a sofic group and establish their fundamental properties. Notably,  we demonstrate two reduction formulas and establish that these concepts extends their counterparts within the context of amenable group actions. Throughout the rest of this paper, we consider a fixed factor map $\pi \colon X \to Y$ between two continuous actions of a sofic group $\Gamma$, equipped with continuous pseudometrics $\rho_X$ and $\rho_Y$ on $X$ and $Y$ respectively.

\begin{definition}
For fixed $\varepsilon, \theta > 0, F \in \cF(\Gamma)$, consider a map $\sigma \colon \Gamma \to \Sym(d)$ and a subset $\mathcal{K} \subseteq X^{[d]}$. We say a continuous map $f \colon \mathcal{K} \to P$ for a compact metrizable space $P$ is an {\it $(\varepsilon, \theta, \rho_{X, \infty}|\rho_{Y, \infty})$-embedding} if, for every pair $\varphi, \varphi' \in \mathcal{K}$ satisfying  $f(\varphi)=f(\varphi')$ and $\rho_{Y, \infty}(\pi\circ \varphi, \pi\circ \varphi') \leq \theta$, one has 
$$\rho_{X, \infty}(\varphi, \varphi') < \varepsilon.$$
We call $f$ is an {\it $(\varepsilon|Y, \rho_{X, \infty})$-embedding} if $\rho_{X, \infty}(\varphi, \varphi') < \varepsilon$ holds for every  $\varphi, \varphi' \in \mathcal{K}$ satisfying  $f(\varphi)=f(\varphi')$ and $\pi\circ \varphi =\pi\circ \varphi'$. 
\end{definition}

Denote by $\Wdim_\varepsilon(\mathcal{K}, \theta, \rho_{X, \infty}|\rho_{Y, \infty})$ the minimal (covering) dimension $\dim P$ of a compact metrizable space  $P$ that admits an $(\varepsilon, \theta, \rho_{X, \infty}|\rho_{Y, \infty})$-embedding from $\mathcal{K}$ to $P$. Similarly, denote by $\Wdim_\varepsilon(\mathcal{K}, \rho_{X, \infty}|Y)$ the minimal (covering) dimension $\dim Q$ of a compact metrizable space  $Q$ that admits an $(\varepsilon, \theta, \rho_{X, \infty}|Y)$-embedding from $\mathcal{K}$ to $Q$. If $Y$ is a singleton, we  simplify $\Wdim_\varepsilon(\mathcal{K}, \rho_{X, \infty}|Y)$ as $\Wdim_\varepsilon(\mathcal{K}, \rho_{X, \infty})$.

Let $\Sigma=\{\sigma_i \colon \Gamma \to \Sym(d_i)\}_{i=1}^\infty$ be a sofic approximation of $\Gamma$.
\begin{definition} \label{top def of mdim}
 For every $\varepsilon, \theta > 0, F \in \cF(\Gamma)$, we define
$$\mdim_\Sigma^\varepsilon(\rho_X|\rho_Y, \theta, F, \delta):=\limsup_{i\to \infty}\frac{1}{d_i}\Wdim_\varepsilon(\Map(\rho_X, F, \delta, \sigma_i), \theta, \rho_{X, \infty}|\rho_{Y, \infty}).$$
If  $\Map(\rho, F, \delta, \sigma_i)$ is empty for all sufficiently large $i$, we set 
$\mdim_\Sigma^\varepsilon(\rho_X|\rho_Y, \theta, F, \delta)=-\infty.$
Set
$$\mdim_\Sigma^\varepsilon(\rho_X|\rho_Y, \theta):=\inf_{F \in \cF(\Gamma)} \inf_{\delta > 0}  \mdim_\Sigma^\varepsilon(\rho_X|\rho_Y, \theta, F, \delta),$$
$$\mdim_\Sigma^\varepsilon(\rho_X|\rho_Y):=\inf_{\theta > 0} \mdim_\Sigma^\varepsilon(\rho_X|\rho_Y, \theta),$$
and 
$$\mdim_\Sigma(\rho_X|\rho_Y):=\sup_{\varepsilon > 0} \mdim_\Sigma^\varepsilon(\rho_X|\rho_Y).$$

 When $\rho_X$ and $\rho_Y$ are compatible metrics, we define the {\it conditional sofic mean dimension $\mdims(X|Y)$ of $\Gamma \curvearrowright X$ relative to $\Gamma \curvearrowright Y$}  as  the value $\mdim_\Sigma(\rho_X|\rho_Y)$.  By Lemma \ref{Map pseudo} and an argument of compactness, one sees that $\mdims(X|Y)$ is independent of the choice of compatible metrics $\rho_X$ and $\rho_Y$.
 When $Y$ is trivial as a singleton, $\mdims(X|Y)$ recovers as the {\it sofic mean dimension $\mdims(X)$ of $\Gamma \curvearrowright X$} (see \cite[Definition 2.4]{Li13} \cite[Definition 2.12]{H17}).  To emphasize the metric we may write  $\mdims(\rho_X)$ for $\mdims(X)$.

 Meanwhile,  we set
 $$\mdims(\rho_X|Y):= \sup_{\varepsilon > 0} \inf_{F \in \cF(\Gamma)} \inf_{\delta > 0} \limsup_{i\to \infty}\frac{1}{d_i}\Wdim_\varepsilon(\Map(\rho_X, F, \delta, \sigma_i), \rho_{X, \infty}|Y).$$
  When $\varepsilon, F$ and  $\delta$ are fixed, we write $\mdim_\Sigma^\varepsilon(\rho_X|Y, F, \delta)$ for the above limit supremum. If  $\Map(\rho, F, \delta, \sigma_i)$ is empty for all sufficiently large $i$, the limit supremum is set to be $-\infty$. In a similar situation as above, we have the notion $\mdim_\Sigma^\varepsilon(\rho_X|\rho_Y)$. 
\end{definition}

\begin{remark}
    One can also express the conditional sofic mean dimension using finite open covers in the spirit of \cite[Definition 2.3]{L22}.
In Proposition \ref{second reduction} we shall prove that $\mdims(X|Y)$ can be computed in terms of $\mdims(\rho_X|\rho_Y)$ for any dynamically generating continuous pseudometrics $\rho_X$ and $\rho_Y$ on $X$ and $Y$ respectively.\end{remark}

Meanwhile, we have the sofic mean dimension $\mdim_\Sigma(Y|X)$ of $\Gamma \curvearrowright Y$ relative to $\Gamma \curvearrowright X$  which appeared first in \cite[Definition 8.6]{LL19}.
\begin{definition}
For every $\varepsilon > 0, F \in \cF(\Gamma)$, we set
$$\mdim_\Sigma^\varepsilon(\rho_Y|\rho_X, F, \delta):=\limsup_{i\to \infty}\frac{1}{d_i}\Wdim_\varepsilon(\pi^{d_i}(\Map(\rho_X, F, \delta, \sigma_i)),\rho_{Y, \infty}).$$
If  $\Map(\rho, F, \delta, \sigma_i)$ is empty for all sufficiently large $i$, we set 
$\mdim_\Sigma^\varepsilon(\rho_Y|\rho_X, F, \delta)=-\infty.$
Set
$$\mdim_\Sigma^\varepsilon(\rho_Y|\rho_X):=\inf_{F \in \cF(\Gamma)} \inf_{\delta > 0} \mdim_\Sigma^\varepsilon(\rho_Y|\rho_X, F, \delta),$$
and
$$\mdim_\Sigma(\rho_Y|\rho_X):=\sup_{\varepsilon > 0} \mdim_\Sigma^\varepsilon(\rho_Y|\rho_X).$$

 When $\rho_X$ and $\rho_Y$ are compatible metrics the  {\it sofic mean dimension $\mdim_\Sigma(Y|X)$ of $\Gamma \curvearrowright Y$ relative to $\Gamma \curvearrowright X$} is defined as the value $\mdim_\Sigma(\rho_Y|\rho_X)$. By Lemma \ref{Map pseudo} and an argument of compactness,  $\mdim_\Sigma(\rho_Y|\rho_X)$ is independent of the choice of compatible metrics $\rho_X$ and $\rho_Y$. From the definition, clearly we have
 $$\mdims(\rho_X|Y) \leq \mdims(\rho_X|\rho_Y).$$

Using the technique in the proof of \cite[Lemma 3.7]{Li13},  we now prove two reduction formulas for the conditional sofic mean dimension as appeared in the amenable case.
\end{definition}

\begin{proposition}  \label{dynamical reduction}
 If $\rho_X$ and $\rho_X'$ are two continuous pseudometrics on $X$ which are dynamically generating and $\rho_Y$ is a continuous pseudometric on $Y$, then for any factor map $\pi \colon X \to Y$, we have
$$\mdims(\rho_X|\rho_Y)=\mdims(\rho_X'|\rho_Y).$$
In particular, we have $\mdims(X|Y)=\mdims(\rho_X|\rho_Y)$ if $\rho_Y$ is a compatible metric.
\end{proposition}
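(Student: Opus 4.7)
By symmetry it suffices to show $\mdim_\Sigma^\varepsilon(\rho_X'|\rho_Y) \leq \mdim_\Sigma(\rho_X|\rho_Y)$ for every $\varepsilon > 0$, mirroring the amenable proof strategy via the sofic tools of Lemma \ref{Map pseudo} and Lemma \ref{Map lowerbound}, in the spirit of \cite[Lemma 3.7]{Li13}. Using dynamical generation of $\rho_X$, I first produce $K \in \cF(\Gamma)$ containing $e_\Gamma$ and $\varepsilon' > 0$ such that $\rho_{X, K}(x, x') < \varepsilon'$ implies $\rho_X'(x, x') < \varepsilon$, so that pointwise $\rho_X$-estimates on the orbit $Kx$ control $\rho_X'$.

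Fix $\theta > 0$, a large $F^* \in \cF(\Gamma)$ containing $K$, and a small $\delta^* > 0$. Applying Lemma \ref{Map pseudo} with $\rho_X'$ in the role of the dynamically generating pseudometric, there exist $F \in \cF(\Gamma)$ and $\delta > 0$ such that $\Map(\rho_X', F, \delta, \sigma) \subseteq \Map(\rho_X, F^*, \delta^*, \sigma)$ for every good enough $\sigma \colon \Gamma \to \Sym(d)$. Choose $\varepsilon^* > 0$ with $\varepsilon^* + 2\sqrt{\delta^*} < \varepsilon'$, and let $f \colon \Map(\rho_X, F^*, \delta^*, \sigma_i) \to P_i$ be an $(\varepsilon^*, \theta, \rho_{X, \infty}|\rho_{Y, \infty})$-embedding realizing the minimal dimension. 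Restricting $f$ to $\Map(\rho_X', F, \delta, \sigma_i)$, I aim to show the restriction is an $(\varepsilon, \theta, \rho_{X, \infty}'|\rho_{Y, \infty})$-embedding.

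For the verification, take $\varphi, \varphi' \in \Map(\rho_X', F, \delta, \sigma_i)$ with $f(\varphi) = f(\varphi')$ and $\rho_{Y, \infty}(\pi\circ\varphi, \pi\circ\varphi') \leq \theta$. Then $\rho_{X, \infty}(\varphi, \varphi') < \varepsilon^*$, and since each $\sigma_s$ is a permutation of $[d_i]$, we also have $\rho_X(\varphi(\sigma_s(v)), \varphi'(\sigma_s(v))) < \varepsilon^*$ for every $v$ and every $s \in K$. Combining with Lemma \ref{Map lowerbound} applied to both $\varphi, \varphi'$ and each $s \in K \subseteq F^*$, the triangle inequality gives $\rho_X(s\varphi(v), s\varphi'(v)) < \varepsilon^* + 2\sqrt{\delta^*} < \varepsilon'$ for all $s \in K$ and every $v$ outside a bad set of density at most $2|K|\delta^*$; the choice of $K, \varepsilon'$ then yields $\rho_X'(\varphi(v), \varphi'(v)) < \varepsilon$ on this good set.

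The main obstacle will be upgrading this pointwise bound from ``most $v$'' to ``every $v$'', which is required because $\rho_{X, \infty}'$ is a true supremum over coordinates. Following the technique of \cite[Lemma 3.7]{Li13}, I expect this to be overcome either by refining the Map-set inclusion so that approximate equivariance holds at every coordinate when $\delta$ is small enough, or by augmenting $f$ with an auxiliary finite-dimensional component whose $\Wdim$-contribution is sublinear in $d_i$ and hence vanishes after dividing and taking $\limsup_{i \to \infty}$. Once the restriction is confirmed to be the desired embedding, dividing by $d_i$, taking $\limsup_{i\to\infty}$, and then infimizing over $F^*, \delta^*$ and $\theta$ yields $\mdim_\Sigma^\varepsilon(\rho_X'|\rho_Y) \leq \mdim_\Sigma(\rho_X|\rho_Y)$. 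Taking the supremum over $\varepsilon > 0$ completes the main equality, and the ``in particular'' clause follows by applying it with $\rho_X'$ replaced by any compatible metric on $X$ (which is automatically dynamically generating) together with Definition \ref{top def of mdim}.
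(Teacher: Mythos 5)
Your setup is correct and matches the paper's proof closely: you use dynamical generation of $\rho_X$ to get $K$ and $\varepsilon'$, you invoke Lemma \ref{Map pseudo} to obtain the inclusion $\Map(\rho'_X, F', \delta', \sigma) \subseteq \Map(\rho_X, F, \delta, \sigma)$, and you apply Lemma \ref{Map lowerbound} plus the triangle inequality to show that, at coordinates $v$ where the approximate equivariance over $K$ is good, $f(\varphi)=f(\varphi')$ and $\rho_{Y,\infty}(\pi\varphi,\pi\varphi')\leq\theta$ together force $\rho'_X(\varphi(v),\varphi'(v))<\varepsilon$. You also correctly identify the crux of the matter: the bad set of coordinates has small \emph{density} but need not be empty, and $\rho'_{X,\infty}$ is a sup over all coordinates, so the restriction of $f$ alone need not be an embedding.

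That is exactly where the proof lives, and you have not filled it in. Of the two fixes you float, the first (refine the $\Map$-inclusion so approximate equivariance holds at \emph{every} coordinate) cannot work: the constraint defining $\Map(\rho_X, F, \delta, \sigma)$ is an $\ell^2$-average bound, so no matter how small $\delta$ is, maps in it can still fail equivariance badly on a sparse set of coordinates. The second fix (augment $f$ by an auxiliary component) is the right direction, but the details matter and your description is off in two ways. First, the auxiliary component's dimension is not sublinear in $d$; in the paper it lands in a closed set $Z$ of dimension $\leq |\sU|\,|K|\,\delta\, d < \beta d$, which is linear in $d$ with coefficient $\beta$ --- the point is that $\beta$ is a free parameter sent to $0$ at the end, not that the contribution is $o(d)$. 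Second, and more importantly, the auxiliary map is not just ``anything small'': the paper takes a finite open cover $\sU$ of $X$ with $\rho'_X$-diameter $<\varepsilon$, a subordinate partition of unity $\{\zeta_U\}_{U\in\sU}$, and defines
$$\overrightarrow{h}(\varphi)_v=\overrightarrow{\zeta}(\varphi(v))\cdot\max_{s\in K}\max\bigl(\rho_X(s\varphi(v),\varphi(\sigma_s(v)))-\sqrt{\delta},\,0\bigr).$$
The scaling factor forces $\overrightarrow{h}(\varphi)_v=\overrightarrow{0}$ exactly when equivariance over $K$ is good at $v$ (so your triangle-inequality argument applies there); when it is nonzero, $\overrightarrow{h}(\varphi)_v=\overrightarrow{h}(\varphi')_v$ forces $\zeta_U(\varphi(v))>0$ and $\zeta_U(\varphi'(v))>0$ for a common $U\in\sU$, hence $\rho'_X(\varphi(v),\varphi'(v))<\varepsilon$ directly from the diameter bound. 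This ``tagging'' of bad coordinates with the location of $\varphi(v)$ in the cover is the essential idea, not merely that some low-dimensional component is appended; without it, knowing only $\overrightarrow{h}(\varphi)_v=\overrightarrow{h}(\varphi')_v$ on the bad set gives you nothing. You need to supply this construction (or an equivalent one) for the proof to close.
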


\begin{proof}
Fix $\beta >0$. We will show $\mdim_\Sigma^\varepsilon(\rho'_X|\rho_Y) \leq \mdims(\rho_X|\rho_Y)+ 2\beta$ for every $\varepsilon > 0$. By symmetry, the conclusion then follows.

Since $\rho_X$ is dynamically generating, there exist  $K \in  \cF(\Gamma)$ and $\varepsilon' > 0$ such that for every $x, x' \in X$ satisfying that  $ \rho_{X, K}(x, x') < \varepsilon'$, then one has  $\rho'_X(x, x') < \varepsilon$.
Let $\sU$ be a finite open cover of $X$ with ${\rm diam}(U, \rho'_X) < \varepsilon$ for every $U \in \sU$. Take  $F \in  \cF(\Gamma)$ containing $K$, $\theta > 0$, and $\delta > 0$ with $\sqrt{\delta} < \varepsilon'/3$ and $\delta |\sU||K| < \beta$ such that 
\begin{align}\label{Fdelta}
\mdim_\Sigma^{\varepsilon'/3}(\rho_X|\rho_Y, \theta, F, \delta) \leq \mdim_\Sigma^{\varepsilon'/3}(\rho_X|\rho_Y) +\beta.
\end{align}

Since $\rho'_X$ is dynamically generating, by Lemma \ref{Map pseudo},  there exist  $F' \in  \cF(\Gamma)$ and $\delta' > 0$ such that as a map $\sigma \colon \Gamma \to \Sym(d)$ is good enough, we have
\begin{equation}\label{Mapcompare}
\Map(\rho'_X, F', \delta', \sigma) \subseteq \Map(\rho_X, F ,\delta, \sigma).
\end{equation} 

For every $\varphi \in \Map(\rho_X, F, \delta, \sigma)$, by Lemma \ref{Map lowerbound}, the set 
\begin{equation}\label{cardi}
\{v \in [d]: \rho(s\varphi(v), \varphi(\sigma_s(v))) \leq \sqrt{\delta} {\rm \ for \ every \ } s \in K\}
\end{equation}
 has cardinality at least $(1-|K|\delta)d$. 
 Take a partition of unit $\{\zeta_U \}_{U \in \sU}$ of $X$ subordinate to $\sU$, i.e.
 \begin{itemize}
\item[i)] each map $\zeta_U \colon X \to [0, 1]$ is continuous with {\rm supp} $(\zeta_U) \subseteq U$; \\
\item[ii)] $\sum_{U \in \sU} \zeta_U(x) = 1$ for every $x \in X$.
\end{itemize}
Consequently, we obtain an $(\varepsilon, \rho'_X)$-embedding $ \overrightarrow{\zeta} \colon X \to [0, 1]^\sU$ sending $x$ to $(\zeta_U(x))_{U \in \sU}$. 

Next we define a continuous map $\overrightarrow{h} \colon \Map(\rho_X, F, \delta, \sigma) \to ([0, 1]^\sU)^d$  by
$$(\overrightarrow{h}(\varphi))_v=\overrightarrow{\zeta}(\varphi(v))\max_{s \in K} \left(\max \left(\rho_X(s\varphi(v), \varphi(\sigma_s(v)))-\sqrt{\delta}, 0\right)\right)$$
for every $v \in [d]$ and $\varphi \in \Map(\rho_X, F, \delta, \sigma)$. Denote by $\overrightarrow{0}$ the element of $ [0, 1]^\sU$ taking value zero at all coordinates.
From (\ref{cardi}), we have that the image of $\overrightarrow{h}$ falls into the closed set
$$Z:=\{\omega \colon [d] \to [0, 1]^\sU:  |\{v \in [d]: \omega(v)=\overrightarrow{0}\}| \geq (1-|K|\delta)d   \}.$$
Since $\delta |\sU||K| < \beta$, by \cite[Corollary 1.2.6]{C15}, we have $\dim (Z) \leq |\sU| |K|\delta d < \beta d$.

Now let  $\Phi \colon \Map(\rho_X, F, \delta, \sigma) \to P$ be an $(\varepsilon'/3, \theta, \rho_{X, \infty}|\rho_{Y, \infty})$-embedding for some compact metrizable space $P$ with $\dim P = \Wdim_{\varepsilon'/3}(\Map(\rho_X, F, \delta, \sigma), \theta, \rho_{X, \infty}|\rho_{Y, \infty})$. This induces a continuous map  $\Psi \colon \Map(\rho'_X, F', \delta', \sigma) \to P\times Z$ by
$$\Psi(\varphi)=(\Phi(\varphi), \overrightarrow{h}(\varphi)).$$
By (\ref{Mapcompare}) and the definition of $\overrightarrow{h}$, the map $\Psi$ is well-defined.\\

{\bf Claim:} $\Psi$ is an $(\varepsilon, \theta, \rho'_{X, \infty}|\rho_{Y, \infty})$-embedding.\\

Therefore, by  the choice of $\Phi$, we have
\begin{align*}
&\Wdim_\varepsilon(\Map(\rho'_X, F', \delta', \sigma), \theta, \rho'_{X, \infty}|\rho_{Y, \infty}) \\
& \leq \dim(P\times Z)\\
                                      & \leq \Wdim_{\varepsilon'/3}(\Map(\rho_X, F, \delta, \sigma), \theta, \rho_{X, \infty}|\rho_{Y, \infty})+\beta d
\end{align*}
for all good enough maps $\sigma \colon \Gamma \to \Sym(d)$.
By (\ref{Fdelta}) it follows that
\begin{align*}
\mdim_\Sigma^\varepsilon(\rho'_X|\rho_Y, \theta, F', \delta') &\leq \mdim_\Sigma^{\varepsilon'/3}(\rho_X|\rho_Y, \theta, F, \delta) +\beta \\
       &\leq \mdim_\Sigma^{\varepsilon'/3}(\rho_X|\rho_Y) +2\beta\\
       &\leq \mdim_\Sigma(\rho_X|\rho_Y) +2\beta
\end{align*}
and hence
$$\mdim_\Sigma^\varepsilon(\rho'_X|\rho_Y) \leq \mdim_\Sigma^\varepsilon(\rho'_X|\rho_Y, \theta, F', \delta') \leq \mdims(\rho_X|\rho_Y) +2\beta$$
as desired.\\

{\it Proof of Claim.} 
Suppose that $\varphi, \varphi' \in \Map(\rho'_X, F', \delta', \sigma)$ satisfy that $\Psi(\varphi)=\Psi(\varphi')$ and $\rho_{Y, \infty}(\pi\circ \varphi, \pi \circ \varphi') \leq \theta$. We need to show $\rho'_X(\varphi(v), \varphi'(v)) < \varepsilon$ for every $v \in [d]$.
Let us consider $v \in [d]$ in two cases.\\

{\bf Case 1.} $\overrightarrow{h}(\varphi)_v=\overrightarrow{0} \in [0, 1]^\sU$;

Since $\{\zeta_U\}_{U \in \sU}$ is a partition of unit, there exist $U, U' \in \sU$ such that $\zeta_U(\varphi(v)) \neq 0$ and $\zeta_{U'}(\varphi'(v)) \neq 0$. It forces that for every $s \in K$,
$$\rho_X(s\varphi(v), \varphi(\sigma_s(v))) \leq \sqrt{\delta} {\rm \ and \ } \rho_X(s\varphi'(v), \varphi'(\sigma_s(v)))  \leq \sqrt{\delta}.$$
Since $\Phi$ is an $(\varepsilon'/3, \theta,  \rho_{X, \infty}|\rho_{Y, \infty})$-embedding, for every $s \in K$, we have
$$\rho_X(\varphi(\sigma_s(v)), \varphi'(\sigma_s(v))) < \varepsilon'/3.$$
Note that $\sqrt{\delta} < \varepsilon'/3$. Thus for every $s \in K$ one has
\begin{align*}
\rho_X(s\varphi(v), s\varphi'(v)) & \leq \rho_X(s\varphi(v), \varphi(\sigma_s(v))) +  \rho_X(\varphi(\sigma_s(v)), \varphi'(\sigma_s(v)) )\\
                          &\ \ \  +\rho_X(\varphi'(\sigma_s(v)), s\varphi'(v)) \\
                          & \leq \sqrt{\delta} + \varepsilon'/3 +\sqrt{\delta} < \varepsilon'.
\end{align*}
By the choice of $\varepsilon'$, we conclude that $\rho'_X(\varphi(v), \varphi'(v)) < \varepsilon$.\\

{\bf Case 2.}  $\overrightarrow{h}(\varphi)_v \neq \overrightarrow{0}$.

In this case there exists $U \in \sU$ such that $\overrightarrow{h}(\varphi)_{v, U} > 0$. Since $\overrightarrow{h}(\varphi)_v =\overrightarrow{h}(\varphi')_v \neq \overrightarrow{0}$, according to the definition of $\overrightarrow{h}$, it implies that $\zeta_U(\varphi(v)) > 0$ and $\zeta_U(\varphi'(v)) > 0$.
Thus $\varphi(v), \varphi'(v) \in {\rm supp} (\zeta_U)  \subseteq U$.  Since ${\rm diam} (U, \rho'_X) < \varepsilon$, we have
$$\rho'_X(\varphi(v), \varphi'(v)) \leq {\rm diam} (U, \rho'_X) < \varepsilon.$$
This completes the proof of Claim.

\end{proof}

\begin{proposition}  \label{second reduction}
Fix a dynamically generating continuous pseudomentric $\rho_X$ on $X$. Let $\rho_Y$ and $\rho_Y'$ be two continuous pseudometrics on $Y$ which are dynamically generating. Then for any factor map $\pi \colon X \to Y$, we have
$$\mdims(\rho_X|\rho_Y)=\mdims(\rho_X|\rho_Y').$$
In particular, we have $\mdims(X|Y)=\mdims(\rho_X|\rho_Y)$.
\end{proposition}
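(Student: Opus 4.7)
The plan is to mimic the strategy of Proposition \ref{amenable reduction} within the sofic framework, borrowing the auxiliary-map trick from the proof of Proposition \ref{dynamical reduction} to handle the coordinates where approximate equivariance fails. By symmetry it is enough to establish $\mdim_\Sigma^\varepsilon(\rho_X|\rho_Y) \leq \mdim_\Sigma^\varepsilon(\rho_X|\rho_Y')$ for each $\varepsilon > 0$; fixing $\theta > 0$ and $\beta > 0$, this will reduce in turn to producing $\theta', F', \delta'$ with $\mdim_\Sigma^\varepsilon(\rho_X|\rho_Y, \theta', F', \delta') \leq \mdim_\Sigma^\varepsilon(\rho_X|\rho_Y', \theta) + 2\beta$.

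First, since $\rho_Y$ is dynamically generating, select $K \in \cF(\Gamma)$ and $\theta'' > 0$ such that $\rho_{Y,K}(y,y') \leq \theta''$ forces $\rho_Y'(y,y') \leq \theta$. Fix a finite open cover $\mathcal V$ of $Y$ with each $V$ of $\rho_Y'$-diameter less than $\theta$, together with a subordinate partition of unity $\{\eta_V\}_{V \in \mathcal V}$; this yields a $(\theta, \rho_Y')$-embedding $\overrightarrow{\eta} := (\eta_V)_V \colon Y \to [0,1]^{\mathcal V}$. Choose $\delta_1 > 0$ with $2\sqrt{\delta_1} \leq \theta''/2$ and $|\mathcal V| \cdot |K| \cdot \delta_1 < \beta$, and set $\theta' := \theta''/2$. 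Lemma \ref{Map into} yields $F_0 \in \cF(\Gamma)$ and $\delta_0 > 0$ with $\pi^d(\Map(\rho_X, F_0, \delta_0, \sigma)) \subseteq \Map(\rho_Y, K, \delta_1, \sigma)$ for every good enough $\sigma$. Take $F \in \cF(\Gamma)$ and $\delta > 0$ realizing $\mdim_\Sigma^\varepsilon(\rho_X|\rho_Y', \theta, F, \delta) \leq \mdim_\Sigma^\varepsilon(\rho_X|\rho_Y', \theta) + \beta$ and an $(\varepsilon, \theta, \rho_{X,\infty}|\rho_{Y,\infty}')$-embedding $\Phi \colon \Map(\rho_X, F, \delta, \sigma) \to P$ realizing the corresponding Width.

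The centerpiece is the continuous auxiliary map defined on $\Map(\rho_X, F', \delta', \sigma)$ for $F' \supseteq F \cup F_0$ and $\delta' \leq \min(\delta, \delta_0)$ via
\[
g_v(\varphi) := \max_{s \in K} \max\bigl(0,\ \rho_Y(s \pi \varphi(v), \pi \varphi(\sigma_s(v))) - \sqrt{\delta_1}\bigr), \qquad \overrightarrow{h}(\varphi)_v := g_v(\varphi) \cdot \overrightarrow{\eta}(\pi \varphi(v)).
\]
By Lemma \ref{Map lowerbound} applied to $\pi \circ \varphi \in \Map(\rho_Y, K, \delta_1, \sigma)$, the image of $\overrightarrow{h}$ lands in
\[
Z := \bigl\{\omega \colon [d] \to [0, {\rm diam}(Y,\rho_Y)]^{\mathcal V} : |\{v : \omega(v) = \overrightarrow{0}\}| \geq (1 - |K|\delta_1)\, d\bigr\},
\]
a closed set of covering dimension at most $|\mathcal V| \cdot |K| \delta_1 \cdot d < \beta d$ by \cite[Corollary 1.2.6]{C15}. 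Setting $\Psi(\varphi) := (\Phi(\varphi), \overrightarrow{h}(\varphi))$, granted that $\Psi$ is an $(\varepsilon, \theta', \rho_{X,\infty}|\rho_{Y,\infty})$-embedding, one has $\Wdim_\varepsilon(\Map(\rho_X, F', \delta', \sigma), \theta', \rho_{X,\infty}|\rho_{Y,\infty}) \leq \dim(P) + \dim(Z)$, and dividing by $d$, taking $\limsup$, taking infima over $F', \delta', \theta', \theta$ in sequence, and letting $\beta \to 0$ completes the argument.

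I expect verifying the embedding property of $\Psi$ to be the main obstacle. The idea is a dichotomy at each coordinate $v \in [d]$ for $\varphi, \varphi' \in \Map(\rho_X, F', \delta', \sigma)$ satisfying $\Psi(\varphi) = \Psi(\varphi')$ and $\rho_{Y,\infty}(\pi \varphi, \pi \varphi') \leq \theta'$. When $\overrightarrow{h}(\varphi)_v = \overrightarrow{0}$, the defects $g_v(\varphi)$ and $g_v(\varphi')$ both vanish, and the triangle inequality gives $\rho_Y(s \pi \varphi(v), s \pi \varphi'(v)) \leq 2\sqrt{\delta_1} + \theta' \leq \theta''$ for every $s \in K$, hence $\rho_Y'(\pi \varphi(v), \pi \varphi'(v)) \leq \theta$. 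When $\overrightarrow{h}(\varphi)_v \neq \overrightarrow{0}$, summing the coordinates of $\overrightarrow{h}$ yields $g_v(\varphi) = g_v(\varphi') > 0$, so $\overrightarrow{\eta}(\pi \varphi(v)) = \overrightarrow{\eta}(\pi \varphi'(v))$, placing $\pi \varphi(v)$ and $\pi \varphi'(v)$ in a common element of $\mathcal V$ of $\rho_Y'$-diameter less than $\theta$. Combining the two cases yields $\rho_{Y,\infty}'(\pi \varphi, \pi \varphi') \leq \theta$, whereupon $\Phi(\varphi) = \Phi(\varphi')$ activates the embedding property of $\Phi$ to deliver $\rho_{X,\infty}(\varphi, \varphi') < \varepsilon$. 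The delicate point is the simultaneous balancing of $\theta''$, $\delta_1$ and $\theta'$ so that both the dimension bound on $Z$ and the triangle-inequality estimate close up.
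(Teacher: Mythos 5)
Your proposal is correct and follows essentially the same route as the paper's proof: the same symmetry reduction, the same use of dynamical generation of $\rho_Y$ to pass from $\rho_{Y,K}$ to $\rho_Y'$, the same defect-weighted partition-of-unity map $\overrightarrow{h}$ landing in a set $Z$ of small dimension via \cite[Corollary 1.2.6]{C15}, the product embedding $\Psi=(\Phi,\overrightarrow{h})$, and the same two-case verification. The only differences are bookkeeping choices (you decouple the defect threshold $\delta_1$ from the $\Map$-parameter $\delta$ and use $2\sqrt{\delta_1}+\theta'\leq\theta''$ in place of the paper's $\sqrt{\delta}<\theta'/3$), which do not change the argument.
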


\begin{proof}
Fix $\beta, \theta >0$. We will show $\mdim_\Sigma^\varepsilon(\rho_X|\rho_Y) \leq \mdim_\Sigma^\varepsilon(\rho_X|\rho_Y', \theta)+ 2\beta$ for every $\varepsilon > 0$. By symmetry, the conclusion then follows.

Since $\rho_Y$ is dynamically generating, there exist  $K \in \cF(\Gamma)$ and $\theta' > 0$ such that for every $y, y' \in X$ satisfying that  $ \rho_{Y, K}(y, y') < \theta'$, then one has  $\rho'_Y(y, y') < \theta$.
Let $\sV$ be a finite open cover of $Y$ with ${\rm diam}(V, \rho'_{Y}) < \theta$ for every $V \in \sV$. Take a finite set $F \subseteq \Gamma$ containing $K$,  and $\delta > 0$ with $\sqrt{\delta} < \theta'/3$ and $\delta |\sV||K| < \beta$ such that 
\begin{align}\label{FdeltaY}
\mdim_\Sigma^{\varepsilon}(\rho_X|\rho'_Y, \theta, F, \delta) \leq \mdim_\Sigma^{\varepsilon}(\rho_X|\rho'_Y, \theta) +\beta.
\end{align}

Since $\rho_X$ is dynamically generating, by Lemma \ref{Map into},  there exist  $F' \in  \cF(\Gamma)$ containing $F$ and $\delta > \delta' > 0$ such that as a map $\sigma \colon \Gamma \to \Sym(d)$ is a good enough, we have
\begin{equation}\label{MapcompareY}
\pi^d(\Map(\rho_X, F', \delta', \sigma)) \subseteq \Map(\rho_Y, F ,\delta, \sigma).
\end{equation} 

For every $\psi \in \Map(\rho_Y, F, \delta, \sigma)$, by Lemma \ref{Map lowerbound}, the set 
\begin{equation*}\label{cardiY}
\{v \in [d]: \rho_Y(s\psi(v), \psi(\sigma_s(v))) \leq \sqrt{\delta} {\rm \ for \ every \ } s \in K\}
\end{equation*}
 has cardinality at least $(1-|K|\delta)d$. 
 Taking a partition of unit $\{\zeta_V \}_{V \in \sV}$ of $Y$ subordinate to $\sV$, we obtain a $(\theta, \rho'_Y)$-embedding $ \overrightarrow{\zeta} \colon Y \to [0, 1]^\sV$ sending $y$ to $(\zeta_V(y))_{V \in \sV}$. 

Now we consider a continuous map $\overrightarrow{h} \colon \Map(\rho_Y, F, \delta, \sigma) \to ([0, 1]^\sV)^d$ defined by
$$(\overrightarrow{h}(\psi))_v=\overrightarrow{\zeta}(\psi(v))\max_{s \in K} \left(\max \left(\rho_Y(s\psi(v), \psi(\sigma_s(v)))-\sqrt{\delta}, 0\right)\right)$$
for every $v \in [d]$ and $\psi \in \Map(\rho_Y, F, \delta, \sigma)$. Denote by $\overrightarrow{0}$ the element of $ [0, 1]^\sV$ taking value zero at all coordinates.
Then the image of $\overrightarrow{h}$ falls into the closed set
$$Z:=\{\omega \colon [d] \to [0, 1]^\sV:  |\{v \in [d]: \omega(v)=\overrightarrow{0}\}| \geq (1-|K|\delta)d   \}.$$
Since $\delta |\sV||K| < \beta$, by \cite[Corollary 1.2.6]{C15}, we have
$\dim (Z) \leq |\sV| |K|\delta d < \beta d$.

Let  $\Phi \colon \Map(\rho_X, F, \delta, \sigma) \to P$ be an $(\varepsilon, \theta, \rho_{X, \infty}|\rho'_{Y, \infty})$-embedding for some compact metrizable space $P$ with $\dim P =\Wdim_{\varepsilon}(\Map(\rho_X, F, \delta, \sigma), \theta, \rho_{X, \infty}|\rho'_{Y, \infty})$. Set $\Psi \colon \Map(\rho_X, F', \delta', \sigma) \to P\times Z$ by
$$\Psi(\varphi)=(\Phi(\varphi), \overrightarrow{h}(\pi \circ \varphi))$$
for every $\varphi \in \Map(\rho_X, F', \delta', \sigma)$.
By (\ref{MapcompareY}) and the definition of $\overrightarrow{h}$, the map $\Psi$ is well-defined and continuous.\\

{\bf Claim:} $\Psi$ is an $(\varepsilon, \theta'/3, \rho_{X, \infty}|\rho_{Y, \infty})$-embedding.\\

Therefore, by the choice of $\Phi$, we have
\begin{align*}
&\Wdim_\varepsilon(\Map(\rho_X, F', \delta', \sigma), \theta'/3, \rho_{X, \infty}|\rho_{Y, \infty}) \\
& \leq \dim(P\times Z)\\
& \leq \Wdim_{\varepsilon}(\Map(\rho_X, F, \delta, \sigma), \theta, \rho_{X, \infty}|\rho'_{Y, \infty})+\beta d
\end{align*}
for all good enough maps $\sigma \colon \Gamma \to \Sym(d)$.
By (\ref{FdeltaY}) it follows that
\begin{align*}
\mdim_\Sigma^\varepsilon(\rho_X|\rho_Y, \theta'/3, F', \delta') &\leq \mdim_\Sigma^{\varepsilon}(\rho_X|\rho'_Y, \theta, F, \delta) +\beta \\
       &\leq \mdim_\Sigma^{\varepsilon}(\rho_X|\rho'_Y, \theta) +2\beta
\end{align*}
and hence
$$\mdim_\Sigma^\varepsilon(\rho_X|\rho_Y) \leq \mdim_\Sigma^\varepsilon(\rho_X|\rho_Y, \theta'/3, F', \delta') \leq \mdim_\Sigma^\varepsilon(\rho_X|\rho'_Y, \theta) +2\beta$$
as desired.\\

{\it Proof of Claim.} 
Suppose that $\varphi, \varphi' \in \Map(\rho_X, F', \delta', \sigma)$ satisfy that $\Psi(\varphi)=\Psi(\varphi')$ and $\rho_{Y, \infty}(\pi\circ \varphi, \pi \circ \varphi') \leq \theta'/3$. We need to show $\rho_{X, \infty}(\varphi, \varphi') < \varepsilon$. Since $\Phi$ is an $(\varepsilon, \theta, \rho_{X, \infty}|\rho'_{Y, \infty})$-embedding, it suffices to show $\rho'_{Y, \infty}(\pi\circ \varphi, \pi \circ \varphi') \leq \theta$.
Let us consider $v \in [d]$ in the following two cases.\\

{\bf Case 1.} $\overrightarrow{h}(\pi\varphi)_v=\overrightarrow{0} \in [0, 1]^\sV$;

Since $\{\zeta_V\}_{V \in \sV}$ is a partition of unit, there exist $V, V' \in \sV$ such that $\zeta_V(\pi\varphi(v)) > 0$ and $\zeta_{V'}(\pi\varphi'(v)) > 0$. From the definition of $\overrightarrow{h}$,  it forces that for every $s \in K$,
$$\rho_Y(s\pi\varphi(v), \pi\varphi(\sigma_s(v))) \leq \sqrt{\delta} {\rm \ and \ } \rho_Y(s\pi\varphi'(v), \pi\varphi'(\sigma_s(v))) \leq \sqrt{\delta}$$
Given that $\sqrt{\delta} < \theta'/3$,  for every $s \in K$ one has
\begin{align*}
\rho_Y(s\pi\varphi(v), s\pi\varphi'(v)) & \leq \rho_Y(s\pi\varphi(v), \pi\varphi(\sigma_s(v))) +  \rho_Y(\pi\varphi(\sigma_s(v)), \pi\varphi'(\sigma_s(v)) )\\
                          &\ \ \  +\rho_Y(\pi\varphi'(\sigma_s(v)), s\pi\varphi'(v)) \\
                          & \leq \sqrt{\delta} + \theta'/3 +\sqrt{\delta} < \theta'.
\end{align*}
By the choice of $\theta'$, it follows that $\rho'_Y(\pi\varphi(v), \pi\varphi'(v)) \leq \theta$.\\

{\bf Case 2.}  $\overrightarrow{h}(\pi\varphi)_v \neq \overrightarrow{0}$.

In this case there exists $V \in \sU$ such that $\overrightarrow{h}(\pi\varphi)_{v, V} > 0$. Since $\overrightarrow{h}(\pi\varphi)_v =\overrightarrow{h}(\pi\varphi')_v \neq \overrightarrow{0}$, according to the definition of $\overrightarrow{h}$, it implies that $\zeta_V(\pi\varphi(v)) > 0$ and $\zeta_V(\pi\varphi'(v)) > 0$.
Thus both $\pi\varphi(v)$ and $\pi\varphi'(v)$ reside within  ${\rm supp} (\zeta_V)  \subseteq V$.  Given that ${\rm diam} (V, \rho'_Y) < \theta$, we have
$$\rho'_Y(\pi\varphi(v), \pi\varphi'(v)) \leq {\rm diam} (V, \rho'_Y) < \theta.$$
This completes the proof of Claim.

\end{proof}

By adopting the argument of \cite[Theorem 3.1]{Li13}, we prove that the sofic conditional mean dimension generalizes the notion of conditional mean dimension. The proof breaks down in Lemmas \ref{mdimS larger} and \ref{mdimS smaller}.
\begin{theorem} \label{conditional extension}
Let $\pi \colon X \to Y$ be a factor map under the actions of an infinite amenable group $\Gamma$.
Then
$\mdim_\Sigma(X|Y) = \mdim(X|Y)$.
\end{theorem}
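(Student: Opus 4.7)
My plan is to split the equality into the two directions $\mdims(X|Y) \leq \mdim(X|Y)$ and $\mdim(X|Y) \leq \mdims(X|Y)$ announced as Lemmas \ref{mdimS larger} and \ref{mdimS smaller}, mirroring Li's strategy for the absolute case \cite[Theorem 3.1]{Li13}. Via Proposition \ref{alternative} and Proposition \ref{second reduction}, both sides reduce to $\mdim(\rho_X|\rho_Y)$ and $\mdims(\rho_X|\rho_Y)$ for any fixed compatible metrics $\rho_X, \rho_Y$, so it suffices to compare the $(\varepsilon, \theta)$-scale quantities $\mdim_\varepsilon(\rho_X|\rho_Y, \theta)$ and $\mdim_\Sigma^\varepsilon(\rho_X|\rho_Y, \theta)$.

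For the direction $\mdims \leq \mdim$, I would fix $\varepsilon, \theta > 0$ and a small $\tau > 0$, then apply Lemma \ref{amenable generalize} to obtain nearly-invariant $F_1, \dots, F_\ell \in \cF(\Gamma)$ with the property that any good enough sofic approximation $\sigma \colon \Gamma \to \Sym(d)$ decomposes at least $(1-\tau)d$ vertices of $[d]$ into disjoint Rokhlin tiles $\{\sigma(F_k)\cC_k\}_{k=1}^\ell$. By invariance of each $F_k$, choose $(\varepsilon, \theta, \rho_{X, F_k}|\rho_{Y, F_k})$-embeddings $f_k \colon X \to P_k$ with $\dim P_k \leq (\mdim_\varepsilon(\rho_X|\rho_Y, \theta) + \tau)|F_k|$ and define
\[
\Phi(\varphi) = \bigl((f_k(\varphi(c)))_{1 \leq k \leq \ell,\, c \in \cC_k},\, \overrightarrow{h}(\varphi)\bigr) \in \Bigl(\prod_{k=1}^\ell P_k^{\cC_k}\Bigr) \times Z,
\]
where $\overrightarrow{h}$ is a partition-of-unity penalty map in the spirit of the proofs of Propositions \ref{dynamical reduction} and \ref{second reduction} that records the action of $\varphi$ on the leftover vertices with $\dim Z$ controllably small. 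The key verification is that $\Phi$ is an $(\varepsilon, \theta', \rho_{X, \infty}|\rho_{Y, \infty})$-embedding for an appropriate $\theta' < \theta$: on each tile $\sigma(F_k)c$, approximate equivariance transfers the $\rho_{Y, \infty}$-closeness hypothesis into $\rho_{Y, F_k}(\pi\varphi(c), \pi\varphi'(c)) \leq \theta$, whence $f_k(\varphi(c)) = f_k(\varphi'(c))$ forces $\rho_{X, F_k}(\varphi(c), \varphi'(c)) < \varepsilon$, which propagates along the whole tile. Summing dimensions, dividing by $d$, and letting $\tau \to 0$ yields the desired inequality.

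For the direction $\mdim \leq \mdims$, I would fix $\varepsilon > 0$ and a nearly-invariant $F \in \cF(\Gamma)$, apply Lemma \ref{amenable generalize} with $F_1 = F$ to obtain Rokhlin subsets $\cC_1$ with $|F||\cC_1| \geq (1-\tau)d$, pick a base point $x_0 \in X$, and construct the continuous map
\[
\Psi \colon X^{\cC_1} \to \Map(\rho_X, F, \delta, \sigma), \quad \Psi((x_c))(v) = \begin{cases} sx_c & \text{if } v = \sigma_s(c),\, s \in F,\, c \in \cC_1, \\ x_0 & \text{otherwise.} \end{cases}
\]
Approximate equivariance of $\sigma$ on $F$ ensures that $\Psi$ lands inside $\Map$, and a direct check shows that composing any $(\varepsilon, \theta, \rho_{X, \infty}|\rho_{Y, \infty})$-embedding $g \colon \Map \to Q$ with $\Psi$ produces an $(\varepsilon, \theta, \rho_{X, F, \max}|\rho_{Y, F, \max})$-embedding of $X^{\cC_1}$ into $Q$, where $\rho_{X, F, \max}((x_c),(x'_c)) = \max_c \rho_{X, F}(x_c, x'_c)$. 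Combining this with the product Width dimension inequality
\[
\Wdim_\varepsilon(X^{\cC_1}, \rho_{X, F, \max}|\rho_{Y, F, \max}, \theta) \geq |\cC_1| \cdot \Wdim_\varepsilon(\rho_{X, F}|\rho_{Y, F}, \theta),
\]
dividing by $d$, using $|F||\cC_1|/d \geq 1-\tau$, and letting $F$ become more invariant recovers $\mdim_\varepsilon(\rho_X|\rho_Y, \theta) \leq \mdim_\Sigma^\varepsilon(\rho_X|\rho_Y, \theta)$.

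The main obstacle is the product Width dimension inequality in the conditional setting: even its absolute version $\Wdim_\varepsilon(X^n, \rho_{\max}) \geq n\Wdim_\varepsilon(X, \rho)$ is a nontrivial consequence of classical covering dimension theory, and the conditional analogue requires careful adaptation to preserve the $\rho_Y$-constraint on the embeddings. A secondary bookkeeping challenge is the precise interplay of the parameters $\delta, \theta, \theta', \varepsilon$ through the $\rho_Y$-closeness transfer on each tile, which follows the pattern already established in the proofs of Propositions \ref{dynamical reduction} and \ref{second reduction}.
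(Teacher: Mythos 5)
Your direction $\mdims(X|Y) \leq \mdim(X|Y)$ is essentially the paper's Lemma~\ref{mdimS smaller}, and the overall skeleton is right; the one imprecision is that the paper's embedding map $\Phi$ has \emph{three} components, namely $\overrightarrow{h}(\varphi)$, $(f_k(\varphi(c)))_{k, c}$, and a separate factor $(g(\varphi(v)))_{v\in\cZ}$ for the remainder vertices $\cZ=[d]\setminus\bigsqcup_k\sigma(F_k)\cC_k$ via an $(\varepsilon,\rho_X)$-embedding $g$; the penalty map $\overrightarrow{h}$ in the paper detects only the vertices where approximate equivariance fails, not the untiled remainder, so both $g$ and $\overrightarrow{h}$ are needed (and the $f_k$ must be $(\varepsilon/3,\theta,\cdot)$-embeddings, not $(\varepsilon,\theta,\cdot)$-embeddings, to absorb the $\kappa$-error in the triangle inequality).

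The serious problem is in your direction $\mdim(X|Y) \leq \mdims(X|Y)$. You map a whole product $X^{\cC_1}$ into $\Map(\rho_X,F,\delta,\sigma)$ and then invoke a ``product Width dimension inequality''
\[
\Wdim_\varepsilon\bigl(X^{\cC_1},\rho_{X,F,\max}\,\big|\,\rho_{Y,F,\max},\theta\bigr)\;\geq\;|\cC_1|\cdot\Wdim_\varepsilon\bigl(\rho_{X,F}\,\big|\,\rho_{Y,F},\theta\bigr).
\]
You correctly flag this as the main obstacle, but it is not merely a technical hurdle: even the absolute version $\Wdim_\varepsilon(X^n,\rho_{\max})\geq n\Wdim_\varepsilon(X,\rho)$ is \emph{false} in general, because topological dimension is not additive under products; there are compacta $X$ with $\dim(X\times X)<2\dim X$, and that failure propagates to $\Wdim_\varepsilon$ at small scales. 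So this step cannot be repaired by bookkeeping. The paper (Lemma~\ref{mdimS larger}) avoids the issue entirely by exploiting the hypothesis that $\Gamma$ is \emph{infinite}: one chooses injections $\psi_k\colon\cC_k\to\Gamma$ so that $\Psi\colon\bigsqcup_k F_k\times\cC_k\to\Gamma$, $(s,c)\mapsto s\psi_k(c)$, is injective, sets $\tilde F=\Psi(\bigsqcup_k F_k\times\cC_k)$ (a single nearly invariant finite subset of $\Gamma$ with $|\tilde F|\geq(1-\tau)d$), and maps a \emph{single} copy of $X$ into $\Map(\rho_X,F,\delta,\sigma)$ via $x\mapsto\varphi_x$ with $\varphi_x(\sigma_s(c))=s\psi_k(c)x$. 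Composing with a conditional $\Wdim$-embedding of $\Map$ then yields an $(\varepsilon,\theta,\rho_{X,\tilde F}|\rho_{Y,\tilde F})$-embedding of $X$, and the lower bound
\[
\Wdim_\varepsilon\bigl(\rho_{X,\tilde F}\,\big|\,\rho_{Y,\tilde F},\theta\bigr)\;\geq\;|\tilde F|\bigl(\mdim_\varepsilon(\rho_X|\rho_Y,\theta)-\beta\bigr)
\]
comes directly from the definition of $\mdim_\varepsilon$ as a limit over nearly invariant sets, with no product inequality required. You should replace your $X^{\cC_1}$ construction with this single-copy $\tilde F$ argument, and you will also see why the hypothesis that $\Gamma$ is infinite appears in the statement.
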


\begin{lemma} \label{mdimS larger}
Let $\pi \colon X \to Y$ be a factor map under the actions of an infinite amenable group $\Gamma$.
Let $\rho_X, \rho_Y$ be continuous pseudometrics on $X$ and $Y$ respectively. Then for every $\varepsilon >0$ we have $\mdim_{\Sigma}^\varepsilon(\rho_X|\rho_Y) \geq \mdim_\varepsilon(\rho_X|\rho_Y)$. In particular,
$\mdim_\Sigma(X|Y) \geq \mdim(X|Y)$.
\end{lemma}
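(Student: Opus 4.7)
I will establish the stronger inequality $\mdim_\Sigma^\varepsilon(\rho_X|\rho_Y, \theta) \geq \mdim_\varepsilon(\rho_X|\rho_Y, \theta)$ for each $\theta > 0$; taking the infimum over $\theta$ on both sides then yields the lemma, since $f \geq g$ pointwise implies $\inf f \geq \inf g$. Fix $\theta, \tau > 0$, $F \in \cF(\Gamma)$, and $\delta > 0$. Apply Lemma \ref{amenable generalize} with tolerance $\tau$, choosing the parameters $(K, \delta')$ large enough that the resulting tile shapes $F_1, \ldots, F_\ell$ are, on the one hand, invariant enough relative to $F, \delta$ for the orbit-placement map below to land in $\Map(\rho_X, F, \delta, \sigma)$, and on the other hand, so invariant that
\[
\Wdim_\varepsilon(\rho_{X, F_j}|\rho_{Y, F_j}, \theta)/|F_j| \geq \mdim_\varepsilon(\rho_X|\rho_Y, \theta) - \tau
\]
for every $j$ (possible by Lemma \ref{OW} applied to $\varphi_\theta$). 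For every good enough $\sigma \colon \Gamma \to \Sym(d)$, Lemma \ref{amenable generalize} supplies centers $\cC_j \subseteq [d]$ with the stated bijectivity and $|\bigsqcup_j \sigma(F_j)\cC_j| \geq (1-\tau)d$.

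Fix a basepoint $x_* \in X$ and construct the continuous \emph{orbit-placement map}
\[
\Phi \colon \prod_{j=1}^\ell X^{\cC_j} \to X^{[d]}, \qquad \Phi\bigl((x_c^{(j)})_{j,c}\bigr)(v) = \begin{cases} s \cdot x_c^{(j)}, & v = \sigma_s(c),\ s \in F_j,\ c \in \cC_j, \\ x_*, & v \notin \bigsqcup_j \sigma(F_j)\cC_j. \end{cases}
\]
The bijectivity of $F_j \times \cC_j \to \sigma(F_j)\cC_j$ makes $\Phi$ well-defined on the disjoint union of tiles, and the almost-invariance of each $F_j$ relative to $F$ forces $\Phi$ into $\Map(\rho_X, F, \delta, \sigma)$ (errors confined to tile boundaries and the uncovered part, which is $\leq \tau d$ in total, thus negligible for the $\rho_{X, 2}$ constraint by choosing parameters suitably). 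A direct computation then yields
\[
\rho_{X, \infty}(\Phi(x), \Phi(y)) = \max_{j, c} \rho_{X, F_j}\bigl(x_c^{(j)}, y_c^{(j)}\bigr), \quad \rho_{Y, \infty}(\pi \circ \Phi(x), \pi \circ \Phi(y)) = \max_{j, c} \rho_{Y, F_j}\bigl(\pi x_c^{(j)}, \pi y_c^{(j)}\bigr).
\]
Consequently, any $(\varepsilon, \theta, \rho_{X, \infty}|\rho_{Y, \infty})$-embedding $f \colon \Map(\rho_X, F, \delta, \sigma) \to Q$ pulls back under $\Phi$ to an $(\varepsilon, \theta, \max_{j, c}\rho_{X, F_j}|\max_{j, c}\rho_{Y, F_j})$-embedding of $\prod_j X^{\cC_j}$.

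Invoking the relative form of Gromov's product inequality for $\Wdim$ under $\ell^\infty$-type pseudometrics, one obtains
\[
\dim Q \geq \sum_{j=1}^\ell |\cC_j|\,\Wdim_\varepsilon(\rho_{X, F_j}|\rho_{Y, F_j}, \theta) \geq (\mdim_\varepsilon(\rho_X|\rho_Y, \theta) - \tau)\sum_j |\cC_j||F_j| \geq (1-\tau)\bigl(\mdim_\varepsilon(\rho_X|\rho_Y, \theta) - \tau\bigr)d.
\]
Taking $f$ to realize $\Wdim_\varepsilon(\Map, \theta, \rho_{X, \infty}|\rho_{Y, \infty})$, dividing by $d$, passing to $\limsup_i$ along $\Sigma$, taking infima over $F, \delta$, and finally letting $\tau \to 0$ delivers $\mdim_\Sigma^\varepsilon(\rho_X|\rho_Y, \theta) \geq \mdim_\varepsilon(\rho_X|\rho_Y, \theta)$. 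The principal obstacle is the product inequality invoked in the last step: the absolute version is Gromov's classical product inequality for width dimension on $\ell^\infty$-products, proved via Borsuk--Ulam-type cohomological tools; its conditional extension requires tracking the $\theta$-fiber condition on all product factors simultaneously, which is a technical but natural adaptation of the absolute argument.
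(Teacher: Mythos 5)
Your construction of the orbit-placement map and the verification that it lands in $\Map(\rho_X,F,\delta,\sigma)$ are fine, but the final step is a genuine gap: the ``relative form of Gromov's product inequality'' you invoke, namely $\Wdim_\varepsilon\bigl(\prod_j X^{\cC_j},\max_{j,c}\rho_{X,F_j}\bigr)\geq\sum_j|\cC_j|\,\Wdim_\varepsilon(\rho_{X,F_j}|\rho_{Y,F_j},\theta)$, is not an available tool, and even its absolute version is false for general compact metric spaces. Superadditivity of $\Wdim_\varepsilon$ under $\ell^\infty$-products fails: by Boltyanskii's example there is a compactum $B$ with $\dim(B\times B)=3<2\dim B$, so for small $\varepsilon$ one has $\Wdim_\varepsilon(B\times B,\max(\rho,\rho))\leq 3<2\Wdim_\varepsilon(B,\rho)$ (this is the same phenomenon behind the failure of additivity of mean dimension under products). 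What Gromov actually proves by Lebesgue/Brouwer-type arguments is the exact computation $\Wdim_\varepsilon([0,1]^n,\ell^\infty)=n$, not a product inequality for arbitrary factors. So the inequality your argument hinges on cannot be cited, and as stated it is false; the conditional refinement does not rescue it.

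The paper avoids products entirely, and this is where the hypothesis that $\Gamma$ is \emph{infinite} (which your argument never uses --- a warning sign) enters. Since $\Gamma$ is infinite one can choose maps $\psi_k\colon\cC_k\to\Gamma$ so that $(s,c)\mapsto s\psi_k(c)$ is injective on $\bigsqcup_k F_k\times\cC_k$; the image $\tilde F\subseteq\Gamma$ is then a \emph{single} approximately invariant finite set with $|\tilde F|\geq(1-\tau)d$. One embeds a single copy of $X$ into the model space by $x\mapsto\varphi_x$ with $\varphi_x(\sigma_s(c))=s\psi_k(c)x$, and pulling back an $(\varepsilon,\theta,\rho_{X,\infty}|\rho_{Y,\infty})$-embedding $\Phi$ of $\Map(\rho_X,F,\delta,\sigma)$ yields an $(\varepsilon,\theta,\rho_{X,\tilde F}|\rho_{Y,\tilde F})$-embedding of $X$. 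The lower bound $\dim P\geq\Wdim_\varepsilon(\rho_{X,\tilde F}|\rho_{Y,\tilde F},\theta)\geq(\mdim_\varepsilon(\rho_X|\rho_Y,\theta)-\beta)|\tilde F|\geq(1-\tau)(\cdots)d$ then follows directly from the Ornstein--Weiss limit applied to the one set $\tilde F$, with no recombination of tile contributions needed. To repair your proof you would have to replace the product-inequality step by this single-orbit device (or something equivalent); as written, the argument does not go through.
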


\begin{proof}
Without loss of generality we may assume that $\mdim_\varepsilon(\rho_X|\rho_Y) > 0$.  Fix $0< \beta < \mdim_\varepsilon(\rho_X|\rho_Y)$. We aim to prove 
$$\mdim_\Sigma^\varepsilon(\rho_X|\rho_Y, \theta) \geq \mdim_\varepsilon(\rho_X|\rho_Y) -2\beta$$
as $\theta > 0$ satisfies that $ \mdim_\varepsilon(\rho_X|\rho_Y, \theta) -\beta >0$.
Let  $F  \in \cF(\Gamma)$ and $\delta > 0$. It suffices to show that as $\sigma$ is good enough, one has
$$\Wdim_\varepsilon(\Map(\rho_X, F, \delta, \sigma), \theta, \rho_{X, \infty}|\rho_{Y, \infty}) \geq (\mdim_\varepsilon(\rho_X|\rho_Y, \theta) - 2 \beta)d.$$

Take a finite set $K \subseteq \Gamma$ containing $F$ and $\varepsilon_0 > 0$ such that for any  $F' \in  \cF(\Gamma)$ with $|KF'\setminus F'| < \varepsilon_0|F'|$, one has
\begin{equation} \label{lowerF}
\Wdim_\varepsilon(\rho_{X, F'}|\rho_{Y, F'}, \theta) \geq (\mdim_\varepsilon(\rho_X|\rho_Y, \theta) - \beta)|F'|.
\end{equation}
Since $ \mdim_\varepsilon(\rho_X|\rho_Y, \theta) -\beta >0$, we can find $ \tau > 0$ satisfying $\sqrt{\tau} {\rm diam}(X, \rho_X) < \delta/2$ and 
\begin{equation} \label{tau}
(1 - \tau)(\mdim_\varepsilon(\rho_X|\rho_Y, \theta) -\beta) \geq \mdim_\varepsilon(\rho_X|\rho_Y, \theta) -2\beta.
\end{equation} 

Applying Lemma \ref{amenable generalize}, there exist $\ell \in \mathbb{N}$ and finite $F_1, \dots, F_\ell \subseteq \Gamma$ with $|KF_k \setminus F_k| < \min ( \varepsilon_0, \tau) |F_k|$ for every $k=1, \dots, \ell$ such that for any good enough sofic approximation map $\sigma \colon \Gamma \to \Sym(d)$ and any $\mathcal{W} \subseteq [d]$ with $|\mathcal{W}| \geq (1-\tau/2)d$, there exist $\mathcal{C}_1, \dots, \mathcal{C}_\ell \subseteq \cW$ satisfying the following: 
\begin{itemize}
\item[i)] the map $F_k \times \cC_k \to \sigma(F_k)\cC_k$ sending $(s, c)$ to $\sigma_s(c)$ is bijective for every $k =1, \dots, \ell$;\\
\item[ii)] the sets $\{\sigma(F_k)\cC_k\}_{k=1}^\ell$ are pairwise disjoint with $|\bigsqcup_{k=1}^\ell \sigma(F_k)\cC_k| \geq (1-\tau)d$.
\end{itemize}

Let $\sigma \colon \Gamma \to \Sym(d)$ be a map. Consider the subset
$$\cW =\{v \in  [d]:   \sigma_t\sigma_s(v)=\sigma_{ts}(v) {\rm \ for \ every \ } t \in F, s \in \cup_{k=1}^\ell F_k\}.$$
 By the soficity of $\Gamma$, as $\sigma$ is a good enough sofic approximation, we have $|\cW| \geq (1-\tau/2)d$ and there exist $\cC_1, \dots, \cC_\ell \subseteq \cW$ as above.  
 
 Since $\Gamma$ is infinite, there exist maps $\psi_k  \colon \cC_k \to \Gamma$ for $k = 1, \dots, \ell$ such that the map $\Psi \colon \bigsqcup_{k=1}^\ell F_k \times \cC_k \to \Gamma$ sending $(s, c) \in F_k \times \cC_k $ to $s\psi_k(c)$ is injective. Denote by $\tilde{F}$ the image of $\Psi$. Since $|KF_k \setminus F_k| < \varepsilon_0 |F_k|$ for every $k=1, \dots, \ell$, we have that 
 \begin{equation} \label{tildeF}
 |K\tilde{F} \setminus \tilde{F}| < \varepsilon_0 |\tilde{F}| {\rm \ and  \ } |\tilde{F}| =|\bigsqcup_{k=1}^\ell \sigma(F_k)\cC_k| \geq (1-\tau)d.
 \end{equation} 

Fix $x_0 \in X$. For every $x \in X$ define  a map $\varphi_x \colon [d] \to X$ by $\varphi_x(v) =x_0$ if $v \in [d] \setminus \bigsqcup_{k=1}^\ell \sigma(F_k)\cC_k$, and 
$$\varphi_x(v)=s\psi_k(c)x$$
if $v=\sigma_s(c)$ for some $k =1, \dots, \ell$, $s \in F_k$, and $c \in \cC_k$.
In light of \cite[Lemma 3.4]{Li13}, one has $\varphi_x \in \Map(\rho_X, F, \delta, \sigma)$.

Let $\Phi  \colon \Map(\rho_X, F, \delta, \sigma) \to P$ be an $(\varepsilon, \theta, \rho_{X, \infty}|\rho_{Y, \infty})$-embedding. Since the map $X \to \Map(\rho_X, F, \delta, \sigma)$ sending $x$ to $\varphi_x$ is continuous,  the map $f \colon X \to P$ sending $x$ to $\Phi(\varphi_x)$ is also continuous.\\

{\bf Claim:} $f$ is an $(\varepsilon, \theta, \rho_{X, \tilde{F}}|\rho_{Y, \tilde{F}})$-embedding.\\

Therefore,  combining (\ref{lowerF}), (\ref{tau}) with (\ref{tildeF}), we obtain
\begin{align*}
&\Wdim_\varepsilon(\Map(\rho_X, F, \delta, \sigma), \theta, \rho_{X, \infty}|\rho_{Y, \infty}) \\
&\geq \Wdim_\varepsilon(\rho_{X, \tilde{F}}|\rho_{X, \tilde{F}}, \theta) \\
                                & \geq |\tilde{F}| (\mdim_\varepsilon(\rho_X|\rho_Y, \theta) -\beta) \\
                                & \geq (1-\tau)d(\mdim_\varepsilon(\rho_X|\rho_Y, \theta) -\beta) \\
                                &\geq d(\mdim_\varepsilon(\rho_X|\rho_Y, \theta) -2\beta)
\end{align*}
as desired.\\

{\it Proof of Claim:}
Suppose that $f(x)=f(x')$ and $\rho_{Y, \tilde{F}}(\pi(x), \pi(x')) \leq \theta$. We need to show  $\rho_X(tx, tx') < \varepsilon$ for every $t \in \tilde{F}$.

Since $\rho_{Y, \tilde{F}}(\pi(x), \pi(x')) \leq \theta$, by the construction of $\varphi_x$ and $\varphi_{x'}$, we have 
$$\rho_{Y, \infty}(\pi\circ \varphi_x, \pi \circ \varphi_{x'}) \leq \theta.$$
Since $\Phi  \colon \Map(\rho_X, F, \delta, \sigma) \to P$ is an $(\varepsilon, \theta,  \rho_{X, \infty}|\rho_{Y, \infty})$-embedding, from $f(x)=f(x')$,  we have 
$$\rho_{X, \infty}(\varphi_x, \varphi_{x'}) < \varepsilon.$$

For every $t \in \tilde{F}$ write $t=s\psi_k(c)$ for some  $k=1,\dots, \ell, s \in F_k$, and $c \in \cC_k$. Setting $v=\sigma_s(c)$, we have $\varphi_x(v)=tx$ and $\varphi_{x'}(v)=tx'$. Thus
$$\rho_X(tx, tx') =\rho_X(\varphi_x(v), \varphi_{x'}(v)) < \varepsilon$$
as desired.

\end{proof}

\begin{lemma} \label{mdimS smaller}
Let $\pi \colon X \to Y$ be a factor map under the actions of an amenable group $\Gamma$.
Let $\rho_X, \rho_Y$ be compatible metrics on $X$ and $Y$ respectively. Then for every $\varepsilon > 0$, we have 
$$\mdim_\Sigma^\varepsilon(\rho_X|\rho_Y) \leq \mdim_{\varepsilon/3}(\rho_X|\rho_Y).$$
In particular, $\mdim_\Sigma(\rho_X|\rho_Y) \leq \mdim(\rho_X|\rho_Y)$ and hence
$\mdim_\Sigma(X|Y) \leq \mdim(X|Y)$.
\end{lemma}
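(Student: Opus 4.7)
The plan is to show that for every $\theta' > 0$ and $\beta > 0$ one can produce $\theta > 0$, $F \in \cF(\Gamma)$, and $\delta > 0$ with
$$\mdim_\Sigma^\varepsilon(\rho_X|\rho_Y, \theta, F, \delta) \leq \mdim_{\varepsilon/3}(\rho_X|\rho_Y, \theta') + \beta,$$
from which the lemma follows by taking infima over $\theta, \theta'$ and letting $\beta \to 0$. The construction conditionalizes the template of \cite[Theorem 3.1]{Li13}, combining the amenable quasi-tiling of Lemma \ref{amenable generalize} with the partition-of-unity obstruction appearing in the proof of Proposition \ref{dynamical reduction}.

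First I would prepare: a finite open cover $\sU$ of $X$ of $\rho_X$-diameter $<\varepsilon$ with subordinate partition of unity $\{\zeta_U\}_{U \in \sU}$, yielding an $(\varepsilon, \rho_X)$-embedding $\overrightarrow{\zeta} \colon X \to [0,1]^\sU$; $K \in \cF(\Gamma)$ and $\varepsilon_0 > 0$ from Lemma \ref{OW} so that every $(K,\varepsilon_0)$-invariant $F'$ satisfies $\Wdim_{\varepsilon/3}(\rho_{X,F'}|\rho_{Y,F'},\theta') \leq (\mdim_{\varepsilon/3}(\rho_X|\rho_Y,\theta') + \beta/2)|F'|$; and $\tau > 0$ with $2|\sU|\tau < \beta/2$. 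Apply Lemma \ref{amenable generalize} to obtain $(K,\varepsilon_0)$-invariant $F_1,\dots,F_\ell$ admitting quasi-tilings of all good enough sofic approximations, and for each $k$ select an $(\varepsilon/3, \theta', \rho_{X,F_k}|\rho_{Y,F_k})$-embedding $f_k \colon X \to P_k$ with $\dim P_k = \Wdim_{\varepsilon/3}(\rho_{X,F_k}|\rho_{Y,F_k},\theta')$. Using the uniform continuity of $\pi$, pick $\theta \in (0,\theta')$ and $\delta > 0$ so small that $\sqrt{\delta} < \varepsilon/3$ and $\rho_X(x,x') \leq \sqrt{\delta}$ implies $\rho_Y(\pi(x),\pi(x')) < (\theta'-\theta)/2$; enlarge $F$ to contain $K \cup \bigcup_k F_k$ and shrink $\delta$ so that $|F|\delta < \tau$.

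Now for any good enough $\sigma \colon \Gamma \to \Sym(d)$, set $\cW = \{v \in [d] : \sigma_s\sigma_t(v) = \sigma_{st}(v) \text{ for every } s,t \in F\}$ and use Lemma \ref{amenable generalize} to extract pairwise disjoint tiles $\{\sigma(F_k)\cC_k\}_{k=1}^\ell$ inside $\cW$ covering at least $(1-\tau)d$ points. For each $\varphi \in \Map(\rho_X, F, \delta, \sigma)$, Lemma \ref{Map lowerbound} gives at least $(1-|F|\delta)d$ indices $v$ that are ``good'' in the sense that $\rho_X(s\varphi(v), \varphi(\sigma_s(v))) \leq \sqrt{\delta}$ for every $s \in F$. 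Define $\overrightarrow{h}(\varphi) \in ([0,1]^\sU)^{[d]}$ to vanish precisely when $v$ lies in some tile and $\varphi$ is good at $v$, namely $\overrightarrow{h}(\varphi)(v) = \overrightarrow{\zeta}(\varphi(v)) \cdot m(\varphi,v)$ with $m(\varphi,v) = \max_{s \in F}\max(\rho_X(s\varphi(v), \varphi(\sigma_s(v))) - \sqrt{\delta}, 0)$ when $v \in \bigsqcup_k \sigma(F_k)\cC_k$ and $m(\varphi,v) = 1$ otherwise. The image lands inside the closed set $Z = \{\omega \colon |\{v : \omega(v) = \overrightarrow{0}\}| \geq (1 - 2\tau)d\}$, which by \cite[Corollary 1.2.6]{C15} satisfies $\dim Z \leq 2\tau|\sU|d < (\beta/2)d$. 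I then assemble
$$\Phi(\varphi) = \big((f_k(\varphi(c)))_{k,\,c \in \cC_k},\; \overrightarrow{h}(\varphi)\big) \in \prod_{k=1}^\ell P_k^{\cC_k} \times Z.$$

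To verify $\Phi$ is an $(\varepsilon, \theta, \rho_{X,\infty}|\rho_{Y,\infty})$-embedding, suppose $\Phi(\varphi) = \Phi(\varphi')$ and $\rho_{Y,\infty}(\pi\circ \varphi, \pi\circ \varphi') \leq \theta$, and fix $v \in [d]$. If $\overrightarrow{h}(\varphi)(v) \neq \overrightarrow{0}$, the Case 2 argument of Proposition \ref{dynamical reduction} places $\varphi(v), \varphi'(v)$ in a common $U \in \sU$ and gives $\rho_X(\varphi(v), \varphi'(v)) < \varepsilon$. Otherwise $v = \sigma_s(c)$ for some $c \in \cC_k$ with both maps good at $v$; equivariance of $\pi$ plus the choice of $\delta$ yields $\rho_Y(s\pi\varphi(c), s\pi\varphi'(c)) \leq (\theta'-\theta)/2 + \theta + (\theta'-\theta)/2 = \theta'$ for every $s \in F_k$, whence $\rho_{Y,F_k}(\pi\varphi(c), \pi\varphi'(c)) \leq \theta'$, and the embedding property of $f_k$ together with $f_k(\varphi(c)) = f_k(\varphi'(c))$ forces $\rho_X(s\varphi(c), s\varphi'(c)) < \varepsilon/3$, so the triangle inequality gives $\rho_X(\varphi(v), \varphi'(v)) \leq 2\sqrt{\delta} + \varepsilon/3 < \varepsilon$. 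The dimension count then reads $\sum_k |\cC_k|\dim P_k + \dim Z \leq (\mdim_{\varepsilon/3}(\rho_X|\rho_Y, \theta') + \beta)d$, using $\sum_k |\cC_k||F_k| \leq d$ and the $(K,\varepsilon_0)$-invariance of each $F_k$. The main obstacle is the synchronized quantifier bookkeeping, especially calibrating $\delta$ against the modulus of continuity of $\pi$ at scale $\theta' - \theta$ while ensuring the single space $Z$ simultaneously absorbs the quasi-tiling remainder and every in-tile index at which the approximate-orbit property fails.
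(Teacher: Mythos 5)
Your overall architecture matches the paper's (quasi-tiling via Lemma \ref{amenable generalize}, fiberwise embeddings $f_k$ evaluated at the tile bases $c \in \cC_k$, a partition-of-unity obstruction map absorbed into a low-dimensional set $Z$), and your idea of letting $Z$ also swallow the quasi-tiling remainder by setting $m(\varphi,v)=1$ off the tiles is a legitimate simplification of the paper's separate embedding $g\colon X\to Q$ on $\cZ$. But there is a genuine gap in Case 2. Your obstruction $\overrightarrow{h}$ detects ``goodness at $v$,'' i.e.\ $\rho_X(s\varphi(v),\varphi(\sigma_s(v)))\le\sqrt{\delta}$ for $s\in F$, which relates $\varphi(v)$ to points \emph{downstream} of $v$. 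What Case 2 actually requires, for $v=\sigma_s(c)$ with $c\in\cC_k$ and $s\in F_k$, is the \emph{upstream} relation $\rho_X(s\varphi(c),\varphi(v))\le\sqrt{\delta}$ (goodness at the base point $c$ for the element $s$): this is what feeds both the final triangle inequality $\rho_X(\varphi(v),\varphi'(v))\le 2\sqrt{\delta}+\varepsilon/3$ and your estimate $\rho_Y(\pi(s\varphi(c)),\pi(\varphi(\sigma_s(c))))\le(\theta'-\theta)/2$. Knowing that $\varphi$ and $\varphi'$ are good at $v$ gives neither. This is precisely why the paper works with $\Map(\rho_X,F^{-1},\delta,\sigma)$, defines $\cW$ by $\sigma_{s^{-1}}\sigma_s(v)=v$ so that $\sigma_{s^{-1}}(v)=c$, and builds $\overrightarrow{h}$ from the quantities $\rho_X(s^{-1}\varphi(v),\varphi(\sigma_{s^{-1}}(v)))$ at a threshold $\kappa$ chosen so that applying $s$ converts $\rho_X(s^{-1}\varphi(v),\varphi(c))\le\kappa$ into $\rho_X(\varphi(v),s\varphi(c))<\varepsilon/3$ (a raw $\sqrt{\delta}$ bound would not survive the action of $s$, since $\rho_X$ is not assumed invariant).

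The gap is repairable within your framework: redefine $m(\varphi,v)$ at $v=\sigma_s(c)$ to measure $\max\bigl(\rho_X(s\varphi(c),\varphi(v))-\sqrt{\delta},\,0\bigr)$ (well defined and continuous since the tiling makes the pair $(s,c)$ unique), and note that the $\rho_{X,2}$ bound still forces at most $|F|\delta d$ such $v$ to be bad, so the image stays in $Z$. For the $Y$-side you can also drop the approximate-equivariance transfer entirely and use the paper's shortcut: choose $\theta'$ by equicontinuity of the $Y$-action over $\bigcup_k F_k$ so that $\rho_Y(\pi\varphi(c),\pi\varphi'(c))\le\theta'$ (read off directly from $\rho_{Y,\infty}(\pi\circ\varphi,\pi\circ\varphi')\le\theta'$ at the coordinate $c$) already yields $\rho_{Y,F_k}(\pi\varphi(c),\pi\varphi'(c))\le\theta$. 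As written, however, your Case 2 does not close.
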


\begin{proof}
Fix $\beta, \varepsilon, \theta > 0$. Our goal is to show
\begin{equation} \label{mdimapprox}
 \mdim_\Sigma^\varepsilon(\rho_X|\rho_Y) \leq \mdim_\varepsilon(\rho_X|\rho_Y, \theta) + 3\beta.    
\end{equation}

Take a finite open cover $\cU$ of $X$ satisfying that ${\rm diam}(U, \rho_X) < \varepsilon$ for each $U \in \cU$. Pick $\varepsilon_0 > 0$ and  $K \in  \cF(\Gamma)$ such that for any $F' \in \cF(\Gamma)$ with $|KF' \setminus F'| < \varepsilon_0 |F'|$ we have
$$\Wdim_{\varepsilon/3}(\rho_{X, F'}|\rho_{Y, F'}, \theta) \leq (\mdim_{\varepsilon/3}(\rho_X|\rho_Y, \theta) +\beta)|F'|.$$

Take $\tau > 0$ with  $\tau\Wdim_\varepsilon(\rho_X) \leq \beta$. 
Applying Lemma \ref{amenable generalize}, there exist $\ell \in \mathbb{N}$ and finite $F_1, \dots, F_\ell \subseteq \Gamma$ with $|KF_k \setminus F_k| < \varepsilon_0 |F_k|$ for every $k=1, \dots, \ell$ such that for any good enough sofic approximation $\sigma \colon \Gamma \to \Sym(d)$ and any $\mathcal{W} \subseteq [d]$ with $|\mathcal{W}| \geq (1-\tau/2)d$, there exist $\mathcal{C}_1, \dots, \mathcal{C}_\ell \subseteq \cW$ satisfying the following: 
\begin{itemize}
\item[i)] the map $F_k \times \cC_k \to \sigma(F_k)\cC_k$ sending $(s, c)$ to $\sigma_s(c)$ is bijective for every $k =1, \dots, \ell$;\\
\item[ii)] the sets $\{\sigma(F_k)\cC_k\}_{k=1}^\ell$ are pairwise disjoint with $|\bigsqcup_{k=1}^\ell \sigma(F_k)\cC_k| \geq (1-\tau)d$.
\end{itemize}

 Set $F=\cup_{k=1}^\ell F_k$. Take $\kappa > 0$ such that for any $x, x' \in X$ with $\rho_X(x, x') \leq \kappa$ one has  for every $k=1, \dots, \ell$,
  $$\rho_{X, F_k}(x, x') < \varepsilon/3.$$
Simultaneously, choose $\theta' > 0$ such that for any $y, y' \in Y$ with $\rho_Y(y, y') \leq \theta'$, one has  for every $k=1, \dots, \ell$,
$$\rho_{Y, F_k}(y, y') \leq \theta.$$

Take $\delta > 0$ such that $\delta < \kappa^2$ and $\delta |\cU||F| < \beta$. It then reduces to show that as $\sigma$ is  good enough, the following holds:
\begin{equation} \label{reduction}
\Wdim_\varepsilon(\Map(\rho_X, F^{-1}, \delta, \sigma), \theta', \rho_{X, \infty}|\rho_{Y, \infty}) \leq (\mdim_{\varepsilon/3}(\rho_X|\rho_Y, \theta) + 3\beta)d.
\end{equation}

Put $\cW =\{v \in  [d]:   \sigma_{s^{-1}}\sigma_s(v)=\sigma_{e_\Gamma}(v)=v {\rm \ for \ every \ } s \in F\}$. By the soficity of $\Gamma$, as $\sigma$ is good enough, we have $|\cW| \geq (1-\tau/2)$ and there exist $\cC_1, \dots, \cC_\ell \subseteq \cW$ as above.  Setting $\cZ=[d] \setminus \bigsqcup_{k=1}^\ell \sigma(F_k)\cC_k$, it follows that  $|\cZ| \leq \tau d$. For every $\varphi \in \Map(\rho_X, F^{-1}, \delta, \sigma)$, according to Lemma \ref{Map lowerbound}, the set 
$$\{v \in [d]: \rho(s\varphi(v), \varphi(\sigma_s(v))) \leq \sqrt{\delta} {\rm \ for \ every \ } s \in F^{-1}\}$$
 has cardinality at least $(1-|F|\delta)d$. 
 Take a partition of unit $\{\zeta_U \}_{U \in \sU}$ of $X$ subordinate to $\sU$.
As a consequence, we obtain an $(\varepsilon, \rho_X)$-embedding $ \overrightarrow{\zeta} \colon X \to [0, 1]^\sU$ sending $x$ to $(\zeta_U(x))_{U \in \sU}$. 

Now  consider a continuous map $\overrightarrow{h} \colon \Map(\rho_X, F^{-1}, \delta, \sigma) \to ([0, 1]^\sU)^d$ defined by
$$(\overrightarrow{h}(\varphi))_v=\overrightarrow{\zeta}(\varphi(v))\max_{s \in F^{-1}} \left(\max \left(\rho_X(s\varphi(v), \varphi(\sigma_s(v)))-\kappa, 0\right)\right)$$
for every $v \in [d]$ and $\varphi \in \Map(\rho_X, F^{-1}, \delta, \sigma)$. 
Then the image of $\overrightarrow{h}$ falls into the closed set
$$Z:=\{\omega \colon [d] \to [0, 1]^\sU:  |\{v \in [d]: \omega(v)=\overrightarrow{0}\}| \geq (1-|F|\delta)d   \}.$$
Given that $\delta |\sU||F| < \beta$, by \cite[Corollary 1.2.6]{C15}, we have
$\dim (Z) \leq |\sU| |F|\delta d < \beta d$.

For each $k=1, \dots, \ell$,  choose an $(\varepsilon/3, \theta, \rho_{X, F_k}|\rho_{Y, F_k})$-embedding $f_k \colon X \to P_k$ and an $(\varepsilon, \rho_X)$-embedding  $g \colon X \to Q$ satisfying that \begin{equation}\label{WdimFk}
\dim (P_k)=\Wdim_{\varepsilon/3}(\rho_{X, F_k}|\rho_{Y, F_k}, \theta)  \rm{\ \ and \ \ } \dim (Q)=\Wdim_\varepsilon(\rho_X).
\end{equation}

Next we define a continuous map $\Phi \colon \Map(\rho_X, F^{-1}, \delta, \sigma) \to Z \times (\Pi_{k=1}^\ell P_k^{\cC_k})\times Q^\cZ$ by 
$$\Phi(\varphi):=(\overrightarrow{h}(\varphi), (f_k(\varphi(c)))_{1 \leq k \leq \ell, c \in \cC_k}, (g(\varphi(v)))_{v \in \cZ}).$$

{\bf Claim:} $\Phi$ is an $(\varepsilon, \theta', \rho_{X, \infty}|\rho_{Y, \infty})$-embedding.\\

Therefore, by combining (\ref{mdimapprox}) with (\ref{WdimFk}),  we have
\begin{align*}
&\Wdim_\varepsilon(\Map(\rho_X, F^{-1}, \delta, \sigma), \theta', \rho_{X, \infty}|\rho_{Y, \infty})\\
&\leq  \dim \left(Z \times (\Pi_{k=1}^\ell P_k^{\cC_k})\times Q^\cZ
\right)\\
     &\leq \dim(Z) +\sum_{k=1}^\ell |\cC_k|\dim(P_k) +|\cZ|\dim(Q) \\
    & \leq \beta d + \sum_{k=1}^\ell |\cC_k|\Wdim_{\varepsilon/3}(\rho_{X, F_k}|\rho_{Y, F_k}, \theta) +\tau d\Wdim_\varepsilon(\rho_X) \\
    & \leq \beta d + \sum_{k=1}^\ell |\cC_k||F_k|(\mdim_{\varepsilon/3}(\rho_X|\rho_Y, \theta)+\beta) +\beta d \\
    &\leq \beta d +d(\mdim_{\varepsilon/3}(\rho_X|\rho_Y, \theta)+\beta) +\beta d.
\end{align*}

Hence  the inequality (\ref{reduction}) follows.\\

{\it Proof of Claim:}
Suppose that $\varphi, \varphi' \in \Map(\rho_X, F^{-1}, \delta, \sigma)$ satisfy that $\Phi(\varphi)=\Phi(\varphi')$ and $\rho_{Y, \infty}(\pi\circ \varphi, \pi\circ \varphi') \leq \theta'$, we need to verify that  for every $v \in [d]$,
\begin{equation} \label{verify}
\rho_X(\varphi(v), \varphi'(v)) < \varepsilon
\end{equation}
 For $v \in \cZ$, since $g(\varphi(v))=g(\varphi'(v))$,  by the definition of $g$, it follows that (\ref{verify}) holds.
For $v \notin \cZ$, we divide the discussion into the following two cases.\\

{\bf Case 1.}  $\overrightarrow{h}(\varphi)_v \neq \overrightarrow{0} \in [0, 1]^\sU$;\\

By the construction of $\overrightarrow{\zeta}$, we have $\overrightarrow{h}(\varphi)_{v, U} \neq 0$ for some $U \in \sU$.
It follows that $\varphi(v), \varphi'(v) \in U$. Since ${\rm diam} (U, \rho_X) < \varepsilon$, the inequality (\ref{verify}) is verified.\\

{\bf Case 2.}  $\overrightarrow{h}(\varphi)_v = \overrightarrow{0}$.\\

In this case, we write $v=\sigma_s(c)$ for the unique $1 \leq k \leq \ell$, $s \in F_k$, and $c \in \cC_k$.
Given that $\rho_Y(\pi(\varphi(c)), \pi(\varphi'(c))) \leq \theta'$, by the choice of $\theta'$, we deduce that
$$\rho_{Y, F_k}(\pi(\varphi(c)), \pi(\varphi'(c))) \leq \theta.$$
Since $f_k$ is an $(\varepsilon/3, \theta, \rho_{X, F_k}|\rho_{Y, F_k})$-embedding, from $f_k(\varphi(c))=f_k(\varphi'(c))$, we obtain
$\rho_X(s\varphi(c), s\varphi'(c)) < \varepsilon/3.$
On the other hand, since  $\{\zeta_U\}_{U \in \sU}$ is a partition of unit, there exists $U \in \sU$ such that $\zeta_U(\varphi(v)) \neq 0$. From the design of $\overrightarrow{h}$ it forces that
$$\rho_X(s^{-1}\varphi(v), \varphi(\sigma_{s^{-1}}(v))) \leq \kappa {\rm \ and \ } \rho_X(s^{-1}\varphi'(v), \varphi'(\sigma_{s^{-1}}(v))) \leq \kappa.$$
From $\cC_k \subseteq \cW$, we have 
 $$\sigma_{s^{-1}}(v)=\sigma_{s^{-1}}(v)\sigma_s(c)=\sigma_{e_\Gamma}(c)=c.$$
Thus
$\rho_X(s^{-1}\varphi(v), \varphi(c))  \leq \kappa$. 
By the choice of $\kappa$, we obtain $\rho_X(\varphi(v), s\varphi(c)) < \varepsilon/3$. Simultaneously, $\rho_X(\varphi'(v), s\varphi'(c)) < \varepsilon/3$ also holds.
Therefore,
\begin{align*}
\rho_X(\varphi(v), \varphi'(v)) &\leq \rho_X(\varphi(v), s\varphi(c)) +\rho_X(s\varphi(c), s\varphi'(c)) +\rho_X(s\varphi'(c), \varphi'(v)) \\
                              & < \varepsilon/3 +\varepsilon/3 + \varepsilon/3 =\varepsilon.
\end{align*}

\end{proof}

In the context of sofic group actions, we can establish an estimate for mean dimension of an extension system as follows:

\begin{theorem} \label{general upperbound}
For any factor map $\pi \colon X \to Y$, we have
$$\mdim_\Sigma(X) \leq \mdim_\Sigma(X|Y) +\mdims(Y|X).$$
\end{theorem}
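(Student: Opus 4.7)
The plan is to mimic, at the level of microstate spaces, the amenable-case argument that appears in the corollary following Proposition \ref{alternative}: given a conditional embedding of the $X$-microstates together with an ordinary embedding of the associated $Y$-microstates, I concatenate them into an absolute embedding of the $X$-microstates. First, I fix compatible metrics $\rho_X$ and $\rho_Y$ on $X$ and $Y$, which is legitimate because Propositions \ref{dynamical reduction} and \ref{second reduction} let all three quantities in the target inequality be computed from such metrics.

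Fix $\varepsilon, \theta > 0$, $F \in \cF(\Gamma)$, $\delta > 0$, and a sofic map $\sigma \colon \Gamma \to \Sym(d)$, and write $\cK := \Map(\rho_X, F, \delta, \sigma)$. I would choose a compact metrizable space $P$ together with an $(\varepsilon, \theta, \rho_{X, \infty}|\rho_{Y, \infty})$-embedding $f \colon \cK \to P$ realizing $\dim P = \Wdim_\varepsilon(\cK, \theta, \rho_{X, \infty}|\rho_{Y, \infty})$, and a compact metrizable space $Q$ with a $(\theta, \rho_{Y, \infty})$-embedding $g \colon \pi^d(\cK) \to Q$ realizing $\dim Q = \Wdim_\theta(\pi^d(\cK), \rho_{Y, \infty})$. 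Defining $\Phi \colon \cK \to P \times Q$ by $\Phi(\varphi) = (f(\varphi), g(\pi \circ \varphi))$, I verify that $\Phi$ is an $(\varepsilon, \rho_{X, \infty})$-embedding: if $\Phi(\varphi) = \Phi(\varphi')$, then $g(\pi \circ \varphi) = g(\pi \circ \varphi')$ forces $\rho_{Y, \infty}(\pi \circ \varphi, \pi \circ \varphi') < \theta$, so the conditional embedding property of $f$ combined with $f(\varphi) = f(\varphi')$ yields $\rho_{X, \infty}(\varphi, \varphi') < \varepsilon$. Hence $\Wdim_\varepsilon(\cK, \rho_{X, \infty}) \leq \dim P + \dim Q$.

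Dividing by $d$ and taking $\limsup$ along $\Sigma$ (using subadditivity of $\limsup$), I obtain for each $(F, \delta)$
\begin{equation*}
\mdim_\Sigma^\varepsilon(\rho_X, F, \delta) \leq \mdim_\Sigma^\varepsilon(\rho_X|\rho_Y, \theta, F, \delta) + \mdim_\Sigma^\theta(\rho_Y|\rho_X, F, \delta).
\end{equation*}
Each summand on the right is nonincreasing in $F$ under inclusion and in $\delta$, since $\cK$ shrinks and $\Wdim$ is monotone under inclusion of the ambient set. By choosing $F_1 \cup F_2$ and $\min(\delta_1, \delta_2)$ from approximate minimizers of each summand, one sees that the infimum over $(F, \delta)$ of the sum equals the sum of the two separate infima. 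Infimizing in $(F, \delta)$, then applying $\mdim_\Sigma^\theta(\rho_Y|\rho_X) \leq \mdim_\Sigma(\rho_Y|\rho_X)$, infimizing in $\theta$, and finally supremizing in $\varepsilon$ delivers the desired bound $\mdim_\Sigma(\rho_X) \leq \mdim_\Sigma(\rho_X|\rho_Y) + \mdim_\Sigma(\rho_Y|\rho_X)$. The construction itself is routine; the only delicate point is the order of limits, as $\theta$ must be chosen after $\varepsilon$ but before $(F, \delta)$ so that the separation of the infimum is justified by the monotonicity just noted.
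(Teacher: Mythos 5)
Your proposal is correct and follows essentially the same route as the paper's proof: fix compatible metrics, concatenate a conditional $(\varepsilon,\theta)$-embedding of $\Map(\rho_X,F,\delta,\sigma)$ with a $(\theta,\rho_{Y,\infty})$-embedding of its image under $\pi^d$, and pass to the limits in the order $(F,\delta)$, then $\theta$, then $\varepsilon$. Your extra remark justifying that the infimum over $(F,\delta)$ splits across the sum via monotonicity is a point the paper's proof leaves implicit, and it is handled correctly.
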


\begin{proof}
Let $\rho_X$ and $\rho_Y$ be compatible metrics on $X$ and $Y$ respectively. Fix $\varepsilon, \theta, \delta >0$ and  $F \in \cF(\Gamma)$. 
For every map $\sigma \colon \Gamma \to \Sym(d)$ choose an $(\varepsilon, \theta, \rho_{X, \infty}|\rho_{Y, \infty})$-embedding $\Phi \colon \Map(\rho_X, F, \delta, \sigma) \to P$, as well as a $(\theta, \rho_{Y, \infty})$-embedding $\Psi \colon \pi^d(\Map(\rho_X, F, \delta, \sigma)) \to Q$. Then the continuous map 
$$ \Map(\rho_X, F, \delta, \sigma) \to P\times Q$$
sending $\varphi$ to $(\Phi(\varphi), \Psi(\pi \circ \varphi))$ is an $(\varepsilon, \rho_{X, \infty})$-embedding. It implies that
\begin{align*}
\Wdim_\varepsilon(\Map(\rho_X, F, \delta, \sigma), \rho_{X, \infty}) &\leq \Wdim_\varepsilon(\Map(\rho_X, F, \delta, \sigma), \rho_{X, \infty}|\rho_{Y, \infty}, \theta)\\
& \ \ \ +\Wdim_\theta(\pi^d(\Map(\rho_X, F, \delta, \sigma)), \rho_{Y, \infty})
\end{align*}
and hence 
$$\mdim_{\Sigma}^\varepsilon(\rho_X, F, \delta) \leq \mdim_{\Sigma}^\varepsilon(\rho_X|\rho_Y, \theta, F, \delta) +\mdim_{\Sigma}^\theta(\rho_Y|\rho_X, F, \delta).$$
Taking the infimum over $\delta > 0$ and $F \in \cF(\Gamma)$, we obtain
\begin{align*}
\mdim_{\Sigma}^\varepsilon(\rho_X) & \leq \mdim_{\Sigma}^\varepsilon(\rho_X|\rho_Y, \theta) +\mdim_{\Sigma}^\theta(\rho_Y|\rho_X).
\end{align*}
Letting $\theta $ tend to zero we obtain
$$\mdim_{\Sigma}^\varepsilon(\rho_X) \leq \mdim_{\Sigma}^\varepsilon(\rho_X|\rho_Y) +\mdims(\rho_Y|\rho_X).$$
As $\varepsilon > 0$ is arbitrary, the desired estimation then follows.
\end{proof}

To end this section, we generalize the relative mean dimension introduced by Tsukamoto \cite{T22} into the sofic context.  Recall that as $\Gamma$ is an amenable group and $\rho_X$ is a compatible metric on $X$, the {\it relative mean dimension of a factor map $\pi \colon X \to Y$} is defined as
$$\mdim(\pi):=\sup_{\varepsilon > 0} \lim_F \frac{\Wdim_\varepsilon(\pi^{-1}(y), \rho_{X, F})}{|F|}, $$
where the existence of the above limit is guaranteed by Lemma \ref{OW}.  

\begin{definition} \label{relative smdim}
  Let $\Gamma$ be a sofic group. For any factor map $\pi \colon X \to Y$, we define the {\it relative sofic mean dimension}  of $\pi$  as
    $$\mdims(\pi)=\sup_{\varepsilon > 0}  \inf_{F \in \cF(\Gamma)} \inf_{\delta > 0} \limsup_{i \to \infty} \frac{\sup_{\psi \in Y^{d_i}}\Wdim_\varepsilon(\Map(\rho_X, F, \delta, \sigma_i)\cap ((\pi^{d_i})^{-1}(\psi)), \rho_{X, \infty})}{d_i}$$
       When $\varepsilon, F, \delta$ are fixed, we write $\mdim_\Sigma^\varepsilon(\pi, F, \delta)$ for the above limit supremum. In a similar situation, we have the notions $\mdim_\Sigma^\varepsilon(\pi)$ etc. If  $\Map(\rho_X, F, \delta, \sigma_i)$ is empty for all sufficiently large $i$, the limit supremum is set to be $-\infty$.

       Again the relative sofic mean dimension is independent of the choice of compatible metric $\rho_X$. Moreover, it can be computed using dynamically generating continuous pseudometrics following from the preceding discussion in Proposition \ref{dynamical reduction}.
\end{definition}

Intuitively, $\mdims(\pi)$ describes the mean dimension of fibers individually, in contrast to $\mdims(X|Y)$. Observe that $\mdims(\pi)$ serves as a natural lower bound of $\mdims(X|Y)$ as illustrated below.  
\begin{proposition}
    For any factor map $\pi \colon X \to Y$, one has $\mdims(\pi) \leq \mdims(X|Y)$.
\end{proposition}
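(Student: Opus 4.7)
The plan is to compare the two quantities at the level of Width dimensions and then pass through the three layers of infima/suprema defining them. Fix compatible metrics $\rho_X$ on $X$ and $\rho_Y$ on $Y$. By Proposition \ref{second reduction} together with the remark following Definition \ref{relative smdim}, both $\mdims(X|Y)$ and $\mdims(\pi)$ can be computed with these $\rho_X$ and $\rho_Y$. It therefore suffices to show that for every $\varepsilon>0$ one has $\mdim_\Sigma^\varepsilon(\pi)\leq \mdim_\Sigma^\varepsilon(\rho_X|\rho_Y)$, after which taking the supremum over $\varepsilon>0$ yields the proposition.

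The main step is the following pointwise comparison. Fix $\varepsilon,\theta>0$, $F\in \cF(\Gamma)$, $\delta>0$, and a map $\sigma\colon \Gamma\to \Sym(d)$. Let $\Phi\colon \Map(\rho_X,F,\delta,\sigma)\to P$ be an $(\varepsilon,\theta,\rho_{X,\infty}|\rho_{Y,\infty})$-embedding realising $\dim P = \Wdim_\varepsilon(\Map(\rho_X,F,\delta,\sigma),\theta,\rho_{X,\infty}|\rho_{Y,\infty})$. For any $\psi\in Y^{d}$ and any $\varphi,\varphi'\in \Map(\rho_X,F,\delta,\sigma)\cap(\pi^{d})^{-1}(\psi)$ one has $\pi\circ\varphi=\pi\circ\varphi'=\psi$, hence $\rho_{Y,\infty}(\pi\circ\varphi,\pi\circ\varphi')=0\leq \theta$. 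Therefore, whenever $\Phi(\varphi)=\Phi(\varphi')$, the defining property of $\Phi$ forces $\rho_{X,\infty}(\varphi,\varphi')<\varepsilon$. In other words, the restriction of $\Phi$ to $\Map(\rho_X,F,\delta,\sigma)\cap (\pi^{d})^{-1}(\psi)$ is an $(\varepsilon,\rho_{X,\infty})$-embedding into $P$, giving
\begin{equation*}
\Wdim_\varepsilon\bigl(\Map(\rho_X,F,\delta,\sigma)\cap(\pi^{d})^{-1}(\psi),\rho_{X,\infty}\bigr)\leq \dim P.
\end{equation*}
Since this bound is uniform in $\psi\in Y^{d}$, taking the supremum over $\psi$ yields
\begin{equation*}
\sup_{\psi\in Y^{d}} \Wdim_\varepsilon\bigl(\Map(\rho_X,F,\delta,\sigma)\cap(\pi^{d})^{-1}(\psi),\rho_{X,\infty}\bigr)\leq \Wdim_\varepsilon\bigl(\Map(\rho_X,F,\delta,\sigma),\theta,\rho_{X,\infty}|\rho_{Y,\infty}\bigr).
\end{equation*}

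Specialising to $\sigma=\sigma_i$, dividing by $d_i$, and taking the limit supremum in $i$ gives $\mdim_\Sigma^\varepsilon(\pi,F,\delta)\leq \mdim_\Sigma^\varepsilon(\rho_X|\rho_Y,\theta,F,\delta)$ (including the case when $\Map$ is eventually empty, where both sides are $-\infty$). Taking infimum over $F\in \cF(\Gamma)$ and $\delta>0$ and then over $\theta>0$ on the right yields $\mdim_\Sigma^\varepsilon(\pi)\leq \mdim_\Sigma^\varepsilon(\rho_X|\rho_Y)$, and a final supremum over $\varepsilon$ completes the proof. No serious obstacle is expected; the only subtlety is to observe that the target space $P$ need not be changed when passing to the restriction, so the minimality defining $\Wdim_\varepsilon$ on the fiber is automatically majorised by $\dim P$.
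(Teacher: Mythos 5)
Your proof is correct and is essentially the same argument the paper uses, just written out in full rather than routed through the intermediate quantity $\mdims(\rho_X|Y)$: the paper first notes $\mdims(\pi) \leq \mdims(\rho_X|Y)$ (restricting an $(\varepsilon,\rho_{X,\infty}|Y)$-embedding to a genuine fiber gives an $(\varepsilon,\rho_{X,\infty})$-embedding) and then invokes the already-recorded inequality $\mdims(\rho_X|Y)\leq \mdims(\rho_X|\rho_Y)$, whereas you perform the two restrictions in one step by observing directly that $\pi\circ\varphi=\pi\circ\varphi'$ forces $\rho_{Y,\infty}(\pi\circ\varphi,\pi\circ\varphi')=0\leq\theta$. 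The opening appeal to Proposition \ref{second reduction} is harmless but unnecessary, since both quantities are already defined via compatible metrics.
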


\begin{proof}
  Let $\rho_X$ be a compatible metric on $X$.  From the definition of relative sofic mean dimension, one can easily deduce that 
    $\mdims(\pi) \leq \mdims(\rho_X|Y) $
    and hence the conclusion follows.
\end{proof}

The proof of the following generalizing result uses a similar technique as in proving Theorem \ref{conditional extension}, whose details we omit.
\begin{theorem}
    If $\Gamma$ is an infinite amenable group, then $\mdims(\pi)=\mdim(\pi)$.
\end{theorem}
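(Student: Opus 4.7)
\medskip

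\noindent\textbf{Proof proposal.} The plan is to mirror the two-sided argument used to prove Theorem \ref{conditional extension}, splitting the equality into separate bounds $\mdims(\pi) \geq \mdim(\pi)$ and $\mdims(\pi) \leq \mdim(\pi)$ in the spirit of Lemmas \ref{mdimS larger} and \ref{mdimS smaller}. Fix compatible metrics $\rho_X$, $\rho_Y$ and $\varepsilon > 0$. A crucial preliminary step is to observe that the function $F \mapsto \sup_{y \in Y} \Wdim_\varepsilon(\pi^{-1}(y), \rho_{X, F})$ is right invariant and subadditive, so Lemma \ref{OW} produces a \emph{uniform} Ornstein--Weiss bound: for each $\beta > 0$ there exist $K \in \cF(\Gamma)$ and $\varepsilon_0 > 0$ such that
$$\sup_{y \in Y} \Wdim_\varepsilon(\pi^{-1}(y), \rho_{X, F'}) \leq (\mdim(\pi) + \beta)|F'|$$
whenever $|KF' \setminus F'| < \varepsilon_0 |F'|$.

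For the lower bound, I would reuse verbatim the quasi-tiling construction in the proof of Lemma \ref{mdimS larger}. The key observation is that for each fixed $y \in Y$ and each $x \in \pi^{-1}(y)$, the constructed map $\varphi_x \in \Map(\rho_X, F, \delta, \sigma)$ defined by $\varphi_x(\sigma_s(c)) = s\psi_k(c)x$ satisfies $\pi \circ \varphi_x = \varphi_y^\sharp$ for a single element $\varphi_y^\sharp \in Y^d$ depending only on $y$, because $\pi$ is $\Gamma$-equivariant. Hence $x \mapsto \varphi_x$ maps $\pi^{-1}(y)$ into $\Map(\rho_X, F, \delta, \sigma) \cap (\pi^d)^{-1}(\varphi_y^\sharp)$, and composing with any $(\varepsilon, \rho_{X, \infty})$-embedding of this fiber yields an $(\varepsilon, \rho_{X, \tilde F})$-embedding of $\pi^{-1}(y)$. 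Taking the supremum over $y$ and passing to the limit along $\Sigma$ with $F$ becoming more invariant produces $\mdims(\pi) \geq \mdim(\pi)$ after routine epsilonics.

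For the upper bound, I would adapt the construction from Lemma \ref{mdimS smaller}, replacing the global $(\varepsilon/3, \theta, \rho_{X, F_k}|\rho_{Y, F_k})$-embeddings $f_k \colon X \to P_k$ by genuinely fiberwise embeddings. Given $\psi \in Y^d$, for each $k = 1, \dots, \ell$ and $c \in \cC_k$ pick a $(\varepsilon/3, \rho_{X, F_k})$-embedding $f_{k, c} \colon \pi^{-1}(\psi(c)) \to P_{k, c}$ realizing $\dim P_{k, c} = \Wdim_{\varepsilon/3}(\pi^{-1}(\psi(c)), \rho_{X, F_k})$; the preliminary uniform bound guarantees $\dim P_{k, c} \leq (\mdim(\pi) + \beta)|F_k|$ independently of $c$ and of $\psi$. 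Since any $\varphi \in \Map(\rho_X, F^{-1}, \delta, \sigma) \cap (\pi^d)^{-1}(\psi)$ satisfies $\varphi(c) \in \pi^{-1}(\psi(c))$, the product map
$$\Phi(\varphi) := \bigl(\overrightarrow{h}(\varphi),\ (f_{k, c}(\varphi(c)))_{1 \leq k \leq \ell, c \in \cC_k},\ (g(\varphi(v)))_{v \in \cZ}\bigr)$$
is well-defined on $\Map \cap (\pi^d)^{-1}(\psi)$, with $\overrightarrow{h}$, $g$, and the auxiliary space $Z$ precisely as in Lemma \ref{mdimS smaller}. The verification that $\Phi$ is an $(\varepsilon, \rho_{X, \infty})$-embedding is a direct simplification of the argument there: the two-case analysis on whether $\overrightarrow{h}(\varphi)_v$ is zero still applies, but now no $\theta$-slack is needed in the $\rho_{Y, F_k}$ coordinate because $\pi\circ \varphi = \pi\circ \varphi' = \psi$ exactly. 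Summing dimensions yields the desired bound uniformly in $\psi$.

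The main obstacle is securing this uniformity in $\psi \in Y^d$: the image spaces $P_{k, c}$ depend on $\psi(c)$, so a pointwise fiber bound would not suffice, whereas the Ornstein--Weiss bound applied to $\sup_y \Wdim_\varepsilon(\pi^{-1}(y), \cdot)$ does. I should also remark that the infinite-amenability assumption enters only in the lower bound, precisely to produce the injection $\bigsqcup_k F_k \times \cC_k \to \Gamma$ sending $(s, c)$ to $s\psi_k(c)$ that underlies the construction of $\varphi_x$; the upper bound works for any amenable $\Gamma$.
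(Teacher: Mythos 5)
Your proposal is correct and follows exactly the route the paper intends: the paper omits the details and simply refers to the technique of Theorem \ref{conditional extension}, and your adaptation of Lemmas \ref{mdimS larger} and \ref{mdimS smaller} to the fiberwise setting (the uniform Ornstein--Weiss bound for $F \mapsto \sup_{y}\Wdim_\varepsilon(\pi^{-1}(y),\rho_{X,F})$, the observation that $\pi\circ\varphi_x$ depends only on $\pi(x)$, and the replacement of the relative embeddings by genuine fiberwise ones) is precisely the right way to fill them in. Your remarks on where infinite amenability enters and why the supremum over $y$ must sit inside the Ornstein--Weiss limit are also accurate.
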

\section{Conditional sofic mean dimension of $G$-extensions}
\numberwithin{equation}{section}
\setcounter{equation}{0}
In this section, under some technical assumptions restricted from the definitions of sofic invariants, we do some computations on the conditional sofic mean dimension and relative sofic mean dimension. In particular, we obtain the results for factor maps of $G$-extensions.

Let us start with a simple case of diagonal actions on product spaces.
\begin{proposition} \label{product case}
Let $\Gamma$ continuously act on two compact metrizable spaces $Y$ and $Z$. Consider the diagonal action of $\Gamma$ on $X:=Y\times Z$ and the projection map $\pi \colon X \to Y$ as the factor map. Then
$\mdims(X|Y)\leq \mdims(Z)$; If $\mdims(Y)\geq 0$, we have $\mdims(X|Y)=\mdims(Z)$.
\end{proposition}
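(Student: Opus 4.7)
The plan is to equip $X=Y\times Z$ with the compatible metric
\[
\rho_X((y,z),(y',z')) := \max\bigl(\rho_Y(y,y'),\, \rho_Z(z,z')\bigr),
\]
for which a direct computation yields $\rho_{X,2}^2 \leq \rho_{Y,2}^2 + \rho_{Z,2}^2$ and hence the product inclusion
\[
\Map(\rho_Y, F, \tfrac{\delta}{\sqrt 2}, \sigma) \times \Map(\rho_Z, F, \tfrac{\delta}{\sqrt 2}, \sigma) \subseteq \Map(\rho_X, F, \delta, \sigma),
\]
while the coordinate projections send $\Map(\rho_X, F, \delta, \sigma)$ into $\Map(\rho_Y, F, \delta, \sigma)$ and $\Map(\rho_Z, F, \delta, \sigma)$, and $\rho_{X,\infty}=\max(\rho_{Y,\infty},\rho_{Z,\infty})$. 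These structural facts drive both estimates.

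For $\mdims(X|Y)\leq \mdims(Z)$, I would fix $\varepsilon>0$ and $0<\theta<\varepsilon$. Given a minimal $(\varepsilon,\rho_{Z,\infty})$-embedding $f\colon \Map(\rho_Z,F,\delta,\sigma)\to P$, define $\tilde f(\varphi):=f(\pi_Z\circ\varphi)$ on $\Map(\rho_X,F,\delta,\sigma)$. If $\tilde f(\varphi)=\tilde f(\varphi')$ and $\rho_{Y,\infty}(\pi_Y\varphi,\pi_Y\varphi')\leq\theta$, then $\rho_{Z,\infty}(\pi_Z\varphi,\pi_Z\varphi')<\varepsilon$ by the choice of $f$, so $\rho_{X,\infty}(\varphi,\varphi')=\max(\rho_{Y,\infty},\rho_{Z,\infty})<\varepsilon$ thanks to $\theta<\varepsilon$. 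Thus $\tilde f$ is an $(\varepsilon,\theta,\rho_{X,\infty}|\rho_{Y,\infty})$-embedding; passing to $\limsup$ in $i$ followed by infima over $\theta,F,\delta$ and supremum over $\varepsilon$ yields the inequality.

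For the reverse inequality assuming $\mdims(Y)\geq 0$, fix $\varepsilon,\theta,F,\delta$. The hypothesis makes $J:=\{i:\Map(\rho_Y,F,\delta/\sqrt 2,\sigma_i)\neq\emptyset\}$ infinite, so we may pick $\psi_i\in\Map(\rho_Y,F,\delta/\sqrt 2,\sigma_i)$ for each $i\in J$. The injection $\iota_i(\varphi_Z):=(\psi_i,\varphi_Z)$ sends $\Map(\rho_Z,F,\delta/\sqrt 2,\sigma_i)$ into $\Map(\rho_X,F,\delta,\sigma_i)$ by the product inclusion. For any $(\varepsilon,\theta,\rho_{X,\infty}|\rho_{Y,\infty})$-embedding $\Phi$ of $\Map(\rho_X,F,\delta,\sigma_i)$, the composition $\Phi\circ\iota_i$ is an $(\varepsilon,\rho_{Z,\infty})$-embedding of the source, since the $Y$-coordinate is constantly $\psi_i$ so the $\rho_{Y,\infty}$-distance between any two points in the image of $\iota_i$ is $0\leq\theta$ automatically. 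This delivers the pointwise estimate $\Wdim_\varepsilon(\Map(\rho_Z,F,\delta/\sqrt 2,\sigma_i),\rho_{Z,\infty})\leq \Wdim_\varepsilon(\Map(\rho_X,F,\delta,\sigma_i),\theta,\rho_{X,\infty}|\rho_{Y,\infty})$ along $J$; taking limsup and then infima yields $\mdims(X|Y)\geq\mdims(Z)$.

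The main obstacle is the asymmetry of limsup: the pointwise comparison in the second half holds only along the cofinal subset $J$, whereas $\mdims(Z)$ is defined via $\limsup$ over all $i$. One handles this by noting that $J$ is infinite so $\limsup_{i\in J}$ on the $X$-side is a legitimate lower bound for the full $\limsup$, and by appealing to Lemma \ref{Map into} to confirm that $X$-Map non-emptiness forces $Y$-Map non-emptiness at suitably adjusted parameters, so that indices outside $J$ contribute vacuously on the $X$-side and the infimum over $(F,\delta)$ in $\mdims(Z)$ is genuinely captured along $J$.
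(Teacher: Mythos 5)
Your two constructions are exactly the paper's: the max metric on $Y\times Z$, the map $\varphi\mapsto \Phi(\pi'\circ\varphi)$ (with $\pi'\colon X\to Z$ the other projection), which is an $(\varepsilon,\theta,\rho_{X,\infty}|\rho_{Y,\infty})$-embedding once $\theta<\varepsilon$, for the upper bound, and the constant-first-coordinate injection $\varphi\mapsto(\psi_0,\varphi)$ for the lower bound; the $\delta/\sqrt2$ versus $\delta/2$ calibration is immaterial. So the core of the argument is correct and coincides with the paper's proof.

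The one place you diverge is the closing paragraph, and there your fix is aimed at the wrong side of the inequality. In comparing $\limsup_i a_i$ with $\limsup_i b_i$ when the pointwise bound $a_i\le b_i$ is only known for $i\in J$, the danger is that the $Z$-side quantity $a_i=\Wdim_\varepsilon(\Map(\rho_Z,F,\delta/\sqrt2,\sigma_i),\rho_{Z,\infty})/d_i$ might realize its limsup along indices outside $J$. It is not the $X$-side that needs attention ($\limsup_{i\in J}b_i\le\limsup_i b_i$ is automatic, as you note), and invoking Lemma \ref{Map into} to conclude that emptiness of $Y$-models forces emptiness of $X$-models only makes the right-hand side smaller, which cannot help. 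Your assertion that ``the infimum over $(F,\delta)$ in $\mdims(Z)$ is genuinely captured along $J$'' is precisely what would require proof and is left unjustified. The paper sidesteps this by reading the hypothesis $\mdims(Y)\ge 0$ as supplying $\psi_0\in\Map(\rho_Y,F,\delta/2,\sigma)$ for every good enough $\sigma$ in the sofic approximation sequence, i.e.\ as making $J$ cofinite, so that the pointwise comparison holds for all large $i$ and the limsups compare directly. If you extract only ``$J$ is infinite'' from the hypothesis, the gap you yourself flagged remains open in your write-up; you should either adopt the paper's stronger reading of the nonemptiness convention or supply an argument for why the $Z$-side limsup can be computed along $J$.
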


\begin{proof}
Fix two compatible metrics $\rho_Y, \rho_Z$ on $Y$ and $Z$ respectively. Consider the compatible metric $\rho_X$ on $X$ defined by
$$\rho_X((y, z), (y', z')):=\max(\rho_Y(y, y'), \rho_Z(z, z')).$$
Fix $\varepsilon, \theta, \delta > 0$ and $F \in \cF(\Gamma)$. To show $\mdims(X|Y)\leq \mdims(Z)$, it suffices to show as $\theta < \varepsilon$, 
\begin{equation} \label{product upper bound}
    \Wdim_\varepsilon(\Map(\rho_X, F, \delta, \sigma), \theta, \rho_{X, \infty}|\rho_{Y, \infty}) \leq \Wdim_\varepsilon(\Map(\rho_Z, F, \delta, \sigma), \rho_{X, \infty})
\end{equation}
holds for all such $\delta, F$ and any map $\sigma \colon \Gamma  \to \Sym(d)$.

Let $\Phi \colon \Map(\rho_Z, F, \delta, \sigma) \to P$ be an $(\varepsilon, \rho_{Z, \infty})$-embedding. Denote by $\pi'$ the projection map from $X=Y \times Z$ to $Z$. Note that $\rho_Z ( \pi'(x), \pi'(x')) \leq \rho_X(x, x')$ for all $x, x' \in X$. It follows that $\pi'
\circ \varphi \in \Map(\rho_Z, F, \delta, \sigma)$ for every $\varphi \in \Map(\rho_X, F, \delta, \sigma)$. Thus the map 
$\Psi \colon \Map(\rho_X, F, \delta, \sigma) \to P$ sending $\varphi$ to $\Phi(\pi' \circ \varphi)$ is well defined and continuous.

To establish the inequality \eqref{product upper bound}, it is sufficient to verify that $\Psi$ is an $(\varepsilon, \theta, \rho_{X, \infty}|\rho_{Y, \infty})$-embedding. Assume $\Psi(\varphi)=\Psi(\varphi')$ and $\rho_{Y, \infty}(\pi\circ \varphi, \pi \circ \varphi') \leq \theta$ for some $\varphi, \varphi' \in \Map(\rho_X, F, \delta, \sigma)$. From $\Psi(\varphi)=\Psi(\varphi')$, we obtain $\rho_{Z, \infty}(\pi'
\circ \varphi, \pi' \circ \varphi') < \varepsilon$. Given that $\theta < \varepsilon$, we have
$$\rho_{X, \infty}(\varphi, \varphi')=\max(\rho_{Y, \infty}(\pi \circ \varphi, \pi \circ \varphi'), \rho_{Z, \infty}(\pi'
\circ \varphi, \pi' \circ \varphi')) < \varepsilon$$
and hence $\Psi$ is an $(\varepsilon, \theta, \rho_{X, \infty}|\rho_{Y, \infty})$-embedding.

Now suppose $\mdims(Y)\geq 0$. It remains to show $\mdims(X|Y)\geq \mdims(Z)$. Fix $\varepsilon, \theta, \delta > 0$ and $F \in \cF(\Gamma)$. It suffices to show
\begin{align} \label{product lower bound}
\Wdim_\varepsilon(\Map(\rho_Z, F, \delta/2, \sigma), \rho_{Z, \infty}) \leq \Wdim_\varepsilon(\Map(\rho_X, F, \delta, \sigma), \theta, \rho_{X, \infty}|\rho_{Y, \infty})
\end{align}
as $\sigma$ is good enough.

Since $\mdims(Y) \geq 0$, as $\sigma$ is good enough, there exists $\psi_0 \in \Map(\rho_Y, F, \delta/2, \sigma)$ and we shall fix such an element. Let $\Phi \colon \Map(\rho_X, F, \delta, \sigma) \to P$ be an $(\varepsilon, \theta, \rho_{X, \infty}|\rho_{Y, \infty})$-embedding. Define a continuous map $\Psi \colon \Map(\rho_Z, F, \delta/2, \sigma) \to P$ sending $\varphi$ to $\Phi\circ \Xi (\varphi)$, where the map $\Xi \colon Z^{[d]} \to X^{[d]}=Y^{[d]}\times Z^{[d]}$ sends $\varphi$ to $(\psi_0, \varphi)$. To ensure that $\Psi$ is well defined, we first verify that $\Xi(\varphi) \in \Map(\rho_X, F, \delta, \sigma)$. Note that  for any $x, x' \in X$, we have
$$\rho_X^2(x, x') \leq \rho_Y^2(\pi(x), \pi(x'))+\rho_Z^2( \pi'(x), \pi'(x')).$$
Thus for every $\varphi \in \Map(\rho_Z, F, \delta/2, \sigma)$ and $s \in F$, we have
\begin{align*}
\rho_{X, 2}^2(s\Xi(\varphi), \Xi(\varphi)\circ \sigma_s) &\leq \rho_{Y, 2}^2(s\psi_0, \psi_0 \circ \sigma_s) + \rho_{Z, 2}^2(s\varphi, \varphi \circ \sigma_s) \\
& \leq (\delta/2)^2 + (\delta/2)^2 < \delta^2.
\end{align*}
It follows that $\Xi(\varphi) \in \Map(\rho_X, F, \delta, \sigma)$.

To show the inequality \eqref{product lower bound}, it suffices to verify that $\Psi$ is an $(\varepsilon, \rho_{Z, \infty})$-embedding. Let $\Psi(\varphi)=\Psi(\varphi')$ for some $\varphi, \varphi' \in \Map(\rho_Z, F, \delta/2, \sigma)$. We aim to show 
$$\rho_{Z, \infty}(\varphi, \varphi') < \varepsilon.$$ From $\pi \circ \Xi(\varphi)=\psi_0=\pi \circ\Xi(\varphi')$ we obtain 
$\rho_{Y, \infty}(\pi \circ \Xi(\varphi), \pi \circ \Xi(\varphi')) \leq \theta$. Since $\Phi$ is an $(\varepsilon, \theta, \rho_{X, \infty}|\rho_{Y, \infty})$-embedding and $\Psi(\varphi)=\Psi(\varphi')$, we have
$$\rho_{Z, \infty}(\varphi, \varphi')=\rho_{X, \infty}(\Xi(\varphi), \Xi(\varphi')) < \varepsilon$$
as desired.

\end{proof}

\begin{remark}
Note that the requirement in Proposition \ref{product case} that $\mdims(Y)\geq 0$ is necessary since there is a possibility, in general, that  $\Map(\rho_Y, F, \delta, \sigma)$ could be empty which forces that $\Map(\rho_X, F, \delta, \sigma)$ is empty as well.  However, $\Map(\rho_Z, F, \delta, \sigma)$ could be nonempty.
\end{remark}

Now we consider the broader scope of $G$-extensions (see Definition \ref{G-extension} and examples therein). It turns out that the relative mean dimension of a $G$-extension map is easier to be obtained.

\begin{proposition} \label{relative  computation}
Let  $\pi \colon X \to Y$ be a $G$-extension.  If $\mdims(X) \geq 0$, then $\mdims(\pi)=\mdims(G)$. 
\end{proposition}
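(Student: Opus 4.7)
The plan is to parametrize the fibers of $\pi^d$ via the $G$-coordinate and compare the two sofic mean dimensions through this identification. Fix compatible metrics $\rho_X$ on $X$ and $\rho_G$ on $G$. By Definition \ref{G-extension} the map $G \to \pi^{-1}(\pi(x))$, $g \mapsto xg$, is a homeomorphism varying continuously in $x$, and the $G$-action on the compact space $X$ is continuous; standard compactness arguments therefore produce moduli $\eta, \mu, \nu \colon (0, \infty) \to (0, \infty)$, independent of base points, such that for all $x, y \in X$ and $g, g', h \in G$: (i) $\rho_G(g, g') \le \eta(\varepsilon)$ implies $\rho_X(xg, xg') \le \varepsilon$; (ii) $\rho_X(x, y) \le \mu(\varepsilon)$ implies $\rho_X(xh, yh) \le \varepsilon$; (iii) $\rho_X(xg, xg') \le \nu(\varepsilon)$ implies $\rho_G(g, g') \le \varepsilon$. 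The last packages the ``inverse'' of the $G$-parametrization of a fiber and uses freeness (property (2)) together with a compactness-contradiction argument.

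For $\mdims(\pi) \le \mdims(G)$, fix $\varepsilon, \delta > 0$, $F \in \cF(\Gamma)$, and a sofic approximation $\sigma \colon \Gamma \to \Sym(d)$. For each $\psi \in Y^{[d]}$ with nonempty fiber $\cM_\psi := \Map(\rho_X, F, \delta, \sigma) \cap (\pi^d)^{-1}(\psi)$, pick a base point $\varphi_0 \in \cM_\psi$ and write each $\varphi \in \cM_\psi$ uniquely as $\varphi(v) = \varphi_0(v) g_\varphi(v)$; the assignment $\varphi \mapsto g_\varphi$ is continuous. Applying Lemma \ref{Map lowerbound} to both $\varphi$ and $\varphi_0$ gives pointwise control of $\rho_X(s\varphi(v), \varphi(\sigma_s(v)))$ and $\rho_X(s\varphi_0(v), \varphi_0(\sigma_s(v)))$ by $\sqrt{\delta}$ on a set of $v$ of cardinality at least $(1 - 2\delta)d$. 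Combining with (ii) (to remove the discrepancy $s\varphi_0(v)$ versus $\varphi_0(\sigma_s(v))$) and with (iii) (to convert the resulting $\rho_X$-estimate into a $\rho_G$-estimate on $g_\varphi$), and absorbing the remaining $2\delta d$ coordinates via $\mathrm{diam}(G, \rho_G)$, one obtains $g_\varphi \in \Map(\rho_G, F, \delta', \sigma)$ for some $\delta' = \delta'(\delta) \to 0$ as $\delta \to 0$. By (i), any $(\eta(\varepsilon), \rho_{G, \infty})$-embedding of $\Map(\rho_G, F, \delta', \sigma)$ pulls back along $\varphi \mapsto g_\varphi$ to an $(\varepsilon, \rho_{X, \infty})$-embedding of $\cM_\psi$, uniformly in $\psi$. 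Dividing by $d$ and taking the relevant limits yields $\mdim_\Sigma^\varepsilon(\pi) \le \mdim_\Sigma^{\eta(\varepsilon)}(G)$, hence $\mdims(\pi) \le \mdims(G)$.

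For the reverse inequality, the assumption $\mdims(X) \ge 0$ is essential: it guarantees, for every $(F, \delta_0)$ and all sufficiently large $i$, that $\Map(\rho_X, F, \delta_0, \sigma_i) \ne \varnothing$. Fix any $\varphi_0$ therein and set $\psi := \pi^{d_i} \circ \varphi_0$. For $g \in \Map(\rho_G, F, \delta_1, \sigma_i)$ with $\delta_1$ small (again controlled via Lemma \ref{Map lowerbound} together with (i)--(ii)), the map $\varphi_g(v) := \varphi_0(v) g(v)$ lies in $\cM_\psi \subseteq \Map(\rho_X, F, \delta, \sigma_i)$; the correspondence $g \mapsto \varphi_g$ is injective by (2) and continuous, and any $(\nu(\varepsilon_1), \rho_{X, \infty})$-embedding of $\cM_\psi$ restricts under $\varphi \mapsto g$ to an $(\varepsilon_1, \rho_{G, \infty})$-embedding of $\Map(\rho_G, F, \delta_1, \sigma_i)$ by (iii). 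Taking $\sup_\psi$ and passing through $\limsup_i$ and $\inf_{F, \delta_1}$ gives $\mdim_\Sigma^{\varepsilon_1}(G) \le \mdim_\Sigma^{\nu(\varepsilon_1)}(\pi)$, and hence $\mdims(G) \le \mdims(\pi)$. The principal obstacle is the interplay between the $L^2$-averaging in the definition of $\Map(\cdot, F, \delta, \sigma)$ and the pointwise moduli of continuity of the $G$-action, compounded by the merely approximate equivariance of $\varphi_0$; the standard remedy, used repeatedly in Section 4, is to promote $L^2$ to pointwise control off a cofinite subset via Lemma \ref{Map lowerbound}, apply the moduli pointwise there, and dominate the small bad set by the diameters of $X$ and $G$, with the parameter choices made in a carefully ordered sequence.
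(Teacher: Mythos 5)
Your proposal is correct and follows essentially the same route as the paper, which splits the statement into Lemmas \ref{lower G-bound} and \ref{upper G-bound}: in both directions you use the fiber parametrization $\varphi(v)=\varphi_0(v)g_\varphi(v)$ anchored at a base point $\varphi_0$, promote the $L^2$-bound to pointwise control off a small set via Lemma \ref{Map lowerbound}, convert between $\rho_X$- and $\rho_G$-estimates by uniform-continuity moduli (your (i)--(iii) are just a factored version of the paper's combined modulus), absorb the exceptional coordinates by $\mathrm{diam}(G,\rho_G)$, and pull back Widim-embeddings; the role of $\mdims(X)\geq 0$ in producing a nonempty model space for the lower bound is also identical.
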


The proof of Proposition \ref{relative computation} is completed in the following two lemmas.
\begin{lemma} \label{lower G-bound}
Let $\pi \colon X \to Y$ be a $G$-extension map. If $\mdims(X) \geq 0$, then $\mdims(\pi)\geq \mdims(G)$. In particular, $\mdims(X|Y)\geq \mdims(G)$.
\end{lemma}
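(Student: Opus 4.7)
The plan is to realize $G$-microstates as fiber variations of a fixed $X$-microstate, exploiting the axioms of a $G$-extension. Fix compatible metrics $\rho_X$ on $X$ and $\rho_G$ on $G$. Two uniform continuity facts from compactness will drive the argument. The first, coming from continuity of the multiplication $X\times G\to X$, gives: for every $\alpha > 0$ there exists $\beta > 0$ with $\rho_X(xg, x'g') < \alpha$ whenever $\rho_X(x, x') < \beta$ and $\rho_G(g, g') < \beta$. The second exploits the injectivity in Definition \ref{G-extension}(2): for every $\varepsilon_0 > 0$ there exists $\varepsilon > 0$ such that $\rho_X(xg, xg') < \varepsilon$ forces $\rho_G(g, g') < \varepsilon_0$ uniformly in $x \in X$ and $g, g' \in G$; indeed, if this failed, a compactness argument on $X\times G\times G$ would produce a limit $(x_\infty, g_\infty, g_\infty')$ with $x_\infty g_\infty = x_\infty g_\infty'$ and $g_\infty \neq g_\infty'$, contradicting Definition \ref{G-extension}(2).

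Fix $\varepsilon_0 > 0$, let $\varepsilon$ be as in the second fact, and take arbitrary $F \in \cF(\Gamma)$ and $\delta' > 0$. I would use the first continuity fact together with Lemma \ref{Map lowerbound} to find $F' \supseteq F$ and $\delta'' > 0$ such that for every good enough sofic map $\sigma \colon \Gamma \to \Sym(d)$, every $\varphi_0 \in \Map(\rho_X, F', \delta'', \sigma)$, and every $\tau \in \Map(\rho_G, F', \delta'', \sigma)$, the pointwise product $(\varphi_0 \cdot \tau)(v) := \varphi_0(v)\tau(v)$ belongs to $\Map(\rho_X, F, \delta', \sigma)$. The verification invokes the equivariance $s(xg) = (sx)(sg)$ from Definition \ref{G-extension}(3) and splits $[d]$ into the large subset of $v$ where both $\rho_X(s\varphi_0(v), \varphi_0(\sigma_s(v))) \leq \sqrt{\delta''}$ and $\rho_G(s\tau(v), \tau(\sigma_s(v))) \leq \sqrt{\delta''}$ (provided by Lemma \ref{Map lowerbound}), on which the first continuity fact controls the product pointwise, plus a small complementary set where the diameter of $X$ absorbs the error. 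Property (3) also yields $\pi \circ (\varphi_0 \cdot \tau) = \pi \circ \varphi_0$, so the continuous map $\Xi \colon \tau \mapsto \varphi_0 \cdot \tau$ lands inside the single $\pi^d$-fiber over $\pi \circ \varphi_0$.

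The assumption $\mdims(X) \geq 0$ guarantees that $\Map(\rho_X, F', \delta'', \sigma_i)$ is nonempty for infinitely many $i$; pick $\varphi_0^{(i)}$ inside it for each such $i$. Let $\Phi_i$ be an $(\varepsilon, \rho_{X, \infty})$-embedding of $\Map(\rho_X, F, \delta', \sigma_i) \cap (\pi^{d_i})^{-1}(\pi \circ \varphi_0^{(i)})$ into a compact metrizable space of minimal dimension equal to the $\Wdim_\varepsilon$. Then $\Phi_i \circ \Xi$ is an $(\varepsilon_0, \rho_{G, \infty})$-embedding of $\Map(\rho_G, F', \delta'', \sigma_i)$: equality $\Phi_i(\Xi(\tau)) = \Phi_i(\Xi(\tau'))$ gives $\rho_X(\varphi_0^{(i)}(v)\tau(v), \varphi_0^{(i)}(v)\tau'(v)) < \varepsilon$ for every $v$, whence $\rho_G(\tau(v), \tau'(v)) < \varepsilon_0$ for every $v$ by the second continuity fact. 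Hence the supremum over fibers $\psi$ of $\Wdim_\varepsilon$ dominates $\Wdim_{\varepsilon_0}(\Map(\rho_G, F', \delta'', \sigma_i), \rho_{G, \infty})$. Normalizing by $d_i^{-1}$, taking $\limsup_i$, then $\inf_{F, \delta'}$, then $\sup_{\varepsilon_0}$, yields $\mdims(\pi) \geq \mdims(G)$. The ``in particular'' clause follows from the inequality $\mdims(\pi) \leq \mdims(X|Y)$ established just above this lemma.

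I expect the main obstacle to be the clean verification that $\varphi_0 \cdot \tau \in \Map(\rho_X, F, \delta', \sigma)$. Balancing the pointwise conclusions of Lemma \ref{Map lowerbound} against the $L^2$ condition defining $\Map$ requires choosing $\delta''$ small enough that the uniform modulus of continuity of multiplication on the good subset and the contribution of the small exceptional complement together leave the final $L^2$ deviation below $\delta'$.
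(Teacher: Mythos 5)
Your argument is correct and is essentially the paper's proof: both fix a base microstate $\varphi_0$ (available since $\mdims(X)\geq 0$), push $G$-microstates into the single fiber of $\pi^{d}$ over $\pi\circ\varphi_0$ via $\tau\mapsto\varphi_0\cdot\tau$, and pull back an $(\varepsilon,\rho_{X,\infty})$-embedding using exactly your two uniform-continuity facts (continuity of $X\times G\to X$ for well-definedness, uniform injectivity of $g\mapsto xg$ for the separation transfer). The only cosmetic difference is that you carry out the good-set/bad-set verification that $\varphi_0\cdot\tau\in\Map(\rho_X,F,\delta',\sigma)$ explicitly via Lemma \ref{Map lowerbound}, where the paper cites the analogous computation from Luo's thesis.
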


\begin{proof}
Let $\rho_X, \rho_Y, \rho_G$ be compatible metrics on $X, Y, G$ respectively.  
 Fix $\varepsilon, \theta, \delta > 0$ and $F \in \cF(\Gamma)$. By compactness, there exists $\varepsilon' > 0$ depending only on $\varepsilon$ such that
$$\rho_G(g, g') < \varepsilon'$$  whenever $\rho_X(x_0g, x_0g') < \varepsilon$ for some $x_0 \in X$ and $g, g' \in G$.
To show $\mdims(G) \leq \mdims(X|Y)$, it reduces to show there exists $\delta' > 0$  such that
$$\Wdim_\varepsilon(\Map(\rho_G, F, \delta', \sigma), \rho_{G, \infty}) \leq \max_{\psi \in Y^d}\Wdim_{\varepsilon'}(\Map(\rho_X, F, \delta, \sigma)\cap (\pi^d)^{-1}(\psi), \rho_{X, \infty})$$
for all  good enough map $\sigma \colon \Gamma \to \Sym(d)$.
Since the map $X\times G \to X$ sending $(x, g)$ to $xg$ is uniformly continuous, there exists $\delta' > 0$ such that
$$\rho_X(xg, x'g') \leq \delta/2$$
whenever $x, x' \in X, g, g' \in G$ satisfy $\rho_X(x, x') \leq \sqrt{\delta'}$ and $\rho_G(g, g') \leq \sqrt{\delta'}$. 
We shall further require $\delta'$ is less than $\delta^2(8|F|{\rm diam}(X, \rho_X)^2)^{-1}$.

Since $\mdims(X) \geq 0$, as $\sigma$ is good enough, there exists $\varphi_0 \in \Map(\rho_X, F, \delta', \sigma)$ and we write $\psi_0=\pi\circ \varphi_0$. Let $\Phi \colon \Map(\rho_X, F, \delta, \sigma)\cap (\pi^d)^{-1}(\psi_0) \to P$ be an $(\varepsilon',  \rho_{X, \infty})$-embedding. This map then naturally induces a continuous map 
$$\Psi \colon \Map(\rho_G, F, \delta', \sigma)\cap (\pi^d)^{-1}(\psi_0) \to P$$
sending $\varphi$ to $\Phi(\widetilde{\varphi})$, where the map $\widetilde{\varphi}$ is defined via $$\widetilde{\varphi}(v)=\varphi_0(v)\varphi(v).$$
for every $v \in [d]$. By the same argument of \cite[Lemma 5.7]{Lu17} and the assumption on $\delta'$, one can verify that $\widetilde{\varphi} \in \Map(\rho_X, F, \delta, \sigma)$ and hence $\Psi$ is well defined.

The leftover is to verify that $\Psi$ is an $(\varepsilon, \rho_{G, \infty})$-embedding. Let $\Psi(\varphi)=\Psi(\varphi')$ for some $\varphi, \varphi' \in \Map(\rho_G, F, \delta',\sigma)$.  Since $\Phi$ is an $(\varepsilon', \rho_{X, \infty})$-embedding, it concludes that $\rho_{X, \infty}(\widetilde{\varphi}, \widetilde{\varphi'}) < \varepsilon'$. By our assumption on $\varepsilon'$, it implies that $\rho_{X, \infty}(\varphi, \varphi') < \varepsilon$.
\end{proof}

\begin{lemma} \label{upper G-bound}
    For any $G$-extension $\pi \colon X \to Y$, we have $\mdims(\pi) \leq \mdims(G)$.
\end{lemma}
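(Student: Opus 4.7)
The plan is to invert the construction used in Lemma \ref{lower G-bound}: instead of extending $G$-valued maps to $X$-valued ones by multiplication with a reference, we project $X$-valued maps onto $G$-valued ones by ``division'' with a reference. Fix compatible metrics $\rho_X, \rho_Y, \rho_G$ on $X, Y, G$ respectively. For a given $\varepsilon > 0$, by uniform continuity of the action map $X \times G \to X$ on the compact product, choose $\varepsilon' > 0$ so that $\rho_G(g, g') < \varepsilon'$ implies $\rho_X(x_0 g, x_0 g') < \varepsilon$ uniformly over $x_0 \in X$.

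The central tool is a continuous \emph{difference map} $\alpha$ on $X \times_\pi X := \{(x_0, x) \in X \times X : \pi(x_0) = \pi(x)\}$, valued in $G$, sending $(x_0, x)$ to the unique $g \in G$ with $x_0 g = x$. By conditions (1) and (2) of Definition \ref{G-extension}, the map $(x_0, g) \mapsto (x_0, x_0 g)$ is a continuous bijection from $X \times G$ onto $X \times_\pi X$, hence a homeomorphism between compact Hausdorff spaces; composing its inverse with the projection onto $G$ produces $\alpha$, which is thereby continuous and, by compactness, uniformly continuous. Condition (3) of Definition \ref{G-extension} further yields $\Gamma$-equivariance $s\alpha(x_0, x) = \alpha(sx_0, sx)$.

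Fix $F \in \cF(\Gamma)$ and $\delta' > 0$. Using uniform continuity of $\alpha$ in tandem with Lemma \ref{Map lowerbound}, pick $\delta > 0$ small enough (depending on $F$, $\delta'$, ${\rm diam}(G, \rho_G)$, and the modulus of uniform continuity of $\alpha$) so that whenever $\varphi_0, \varphi \in \Map(\rho_X, F, \delta, \sigma)$ satisfy $\pi^d \circ \varphi_0 = \pi^d \circ \varphi$, the induced map
$$\Psi_{\varphi_0}(\varphi)(v) := \alpha(\varphi_0(v), \varphi(v)), \quad v \in [d],$$
lies in $\Map(\rho_G, F, \delta', \sigma)$. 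The verification partitions $[d]$ into the ``good'' indices where both $\varphi$ and $\varphi_0$ satisfy $\rho_X(s\varphi(v), \varphi(\sigma_s v)) \leq \sqrt{\delta}$ and $\rho_X(s\varphi_0(v), \varphi_0(\sigma_s v)) \leq \sqrt{\delta}$ for every $s \in F$, a subset of size $\geq (1 - 2|F|\delta)d$ by Lemma \ref{Map lowerbound}; on this set, uniform continuity of $\alpha$ combined with $s\Psi_{\varphi_0}(\varphi)(v) = \alpha(s\varphi_0(v), s\varphi(v))$ forces $\rho_G(s\Psi_{\varphi_0}(\varphi)(v), \Psi_{\varphi_0}(\varphi)(\sigma_s v))$ to be small, while the complement contributes at most $2|F|\delta \cdot {\rm diam}(G, \rho_G)^2$ to the squared $\rho_{G, 2}$-distance.

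For any $\psi \in Y^d$ with $\Map(\rho_X, F, \delta, \sigma) \cap (\pi^d)^{-1}(\psi)$ nonempty, fix a reference $\varphi_0$ in this fiber (empty fibers contribute $-\infty$ and are harmless). The continuous restriction of $\Psi_{\varphi_0}$ to this fiber, composed with a minimal-dimensional $(\varepsilon', \rho_{G, \infty})$-embedding of $\Map(\rho_G, F, \delta', \sigma)$, produces via the identity $\varphi(v) = \varphi_0(v)\Psi_{\varphi_0}(\varphi)(v)$ and the choice of $\varepsilon'$ an $(\varepsilon, \rho_{X, \infty})$-embedding of the fiber. Consequently,
$$\Wdim_\varepsilon(\Map(\rho_X, F, \delta, \sigma_i) \cap (\pi^{d_i})^{-1}(\psi), \rho_{X, \infty}) \leq \Wdim_{\varepsilon'}(\Map(\rho_G, F, \delta', \sigma_i), \rho_{G, \infty})$$
holds uniformly in $\psi$. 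Dividing by $d_i$, taking $\limsup_i$, then infimum over $F$ and $\delta'$, and finally supremum over $\varepsilon$ yields $\mdims(\pi) \leq \mdims(\rho_G) = \mdims(G)$. The main technical obstacle is the step bridging the $L^2$-type condition defining $\Map$ with the pointwise estimates required to invoke uniform continuity of $\alpha$—mediated exactly by Lemma \ref{Map lowerbound}.
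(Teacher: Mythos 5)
Your proposal is correct and follows essentially the same route as the paper: both fix a reference lift $\varphi_0$ in each fiber, pass to the $G$-valued "quotient" map $v \mapsto g_{\varphi(v)}$ (your $\alpha(\varphi_0(v),\varphi(v))$, whose uniform continuity the paper encodes as the choice of $\delta'$ from the definition of $G$-extensions), verify membership in $\Map(\rho_G,\cdot)$ by splitting $[d]$ into the good set from Lemma \ref{Map lowerbound} and a small remainder controlled by ${\rm diam}(G,\rho_G)$, and then compose with an $(\varepsilon',\rho_{G,\infty})$-embedding. The only differences are notational (the roles of $\delta$ and $\delta'$ are swapped) and that you make the continuity of the difference map explicit via the compactness homeomorphism argument, which the paper leaves implicit.
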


\begin{proof}
  Let $\rho_X, \rho_Y, \rho_G$ be compatible metrics on $X, Y, G$ respectively.  Fix $\varepsilon > 0$. By compactness, there exists $\varepsilon' > 0$ such that
\begin{equation*} \label{varepsilon' assumption}
\rho_X(xg, xg') < \varepsilon
\end{equation*}
for any $x \in X$ and  $g, g' \in G$ satisfying $\rho_G(g, g') < \varepsilon'$. 
For any $\delta > 0$, by the definition of $G$-extensions, there exists $\delta' > 0$ such that $|F|\delta''D^2 \leq \delta^2/4$ and 
\begin{equation*}
    \rho_G(g, g') \leq \delta/2
\end{equation*}
for every $x, x' \in X, g, g' \in G$ with $\rho_X(x, x') \leq \sqrt{\delta'}$ and $\rho_X(xg, x'g') \leq \sqrt{\delta'}$.
Let $F \in \cF(\Gamma)$. We aim to show for every good enough map $\sigma \colon \Gamma \to \Sym(d)$ and any $\psi \in Y^d$, the following holds:
\begin{equation} \label{core estimate}
\Wdim_{\varepsilon}(\Map(\rho_X, F, \delta', \sigma)\cap (\pi^d)^{-1}(\psi), \rho_{X, \infty}) \leq
\Wdim_{\varepsilon'}(\Map(\rho_G, F, \delta, \sigma), \rho_{G, \infty})
\end{equation}

Let $\Phi \colon \Map(\rho_G, F, \delta, \sigma) \to P$ be an $(\varepsilon', \rho_{G, \infty})$-embedding.  Without loss of generalization we may assume the set $\Map(\rho_X, F, \delta', \sigma)\cap (\pi^d)^{-1}(\psi)$ is nonempty, containing an element $\varphi_0$.
For every element $\varphi \in (\pi^d)^{-1}(\psi)$, according to the definition of $G$-extensions, we can express $\varphi(v) $ as 
$$\varphi(v)=\varphi_0(v)g_{\varphi(v)}$$
for the unique $g_{\varphi(v)} \in G$. Define $\widetilde{\varphi} \colon [d] \to G$ via $\widetilde{\varphi}(v):=g_{\varphi(v)}$ for every $v \in [d]$. By the definition of $G$-extensions, the map $(\pi^d)^{-1}(\psi) \to G^{[d]}$ sending $\varphi$ to $\widetilde{\varphi}$ is continuous.

{\bf Claim.} For every $\varphi \in \Map(\rho_X, F, \delta', \sigma)\cap (\pi^d)^{-1}(\psi)$, we have $\widetilde{\varphi} \in \Map(\rho_G, F, \delta, \sigma)$;

Consequently, the map $\Psi \colon \Map(\rho_X, F, \delta', \sigma)\cap (\pi^d)^{-1}(\psi) \to P$, defined by sending $\varphi$ to $\Phi(\widetilde{\varphi})$,  is well-defined and continuous. By the choice of $\varepsilon'$, it can be readily verified that $\Psi$ is an $(\varepsilon, \rho_{X, \infty})$-embedding, from which the inequality \eqref{core estimate} then follows. 

{\it Proof of Claim.} Since $\varphi \in \Map(\rho_X, F, \delta', \sigma)$, by Lemma \ref{Map lowerbound}, the set
$$\mathcal{W}_\varphi=\{v \in [d]: \rho_X(s\varphi(v), \varphi\circ \sigma_s(v)) \leq \sqrt{\delta'} \ {\rm for \ any}  \ s \in F\}$$
has cardinality of at least $(1-|F|\delta')d$. Thus the set $\cW:=\cW_\varphi\cap \cW_{\varphi_0}$ has cardinality of at least $(1-2|F|\delta')d$.
Note that $s\varphi(v)=(s\varphi_0(v))(sg_{\varphi(v)})$ and $\varphi(\sigma_s(v))=\varphi_0(\sigma_s(v))g_{\varphi(\sigma_s(v))}$. By the choice of $\delta'$, for any $v \in \cW$, one has
$$\rho_G(sg_{\varphi(v)}, g_{\varphi(\sigma_s(v))})\leq \delta/2.$$
Set $D={\rm diam} (G, \rho_G)$. Given that $|F|\delta''D^2 \leq \delta^2/4$, it follows that
\begin{align*}
    \rho_{G, 2}^2(s\widetilde{\varphi}, \widetilde{\varphi} \circ \sigma_s) &= \frac{1}{d}\sum_{v \in \mathcal{W}} \rho_G^2(sg_{\varphi(v)}, g_{\varphi(\sigma_s(v))})+\frac{1}{d}\sum_{v \in [d]\setminus \mathcal{W}} \rho_G^2(sg_{\varphi(v)}, g_{\varphi(\sigma_s(v))})\\
    &\leq \frac{1}{d}\sum_{v \in \mathcal{W}} (\frac{\delta}{2})^2 +\frac{1}{d}\sum_{v \in [d]\setminus \mathcal{W}} D^2 \\
    & \leq \frac{1}{d}\left(d\delta^2/4 + 2|F|\delta''dD^2\right) \leq \delta^2
\end{align*}
as desired.
  
\end{proof}

 \begin{remark}
 For a $G$-extension map $\pi \colon X \to Y$ under the actions of an ameanble group,   \cite[Proposition 2.8]{L22} provides an upper bound estimation of $\mdims(X|Y)$ in terms of $\mdim(G)$ when $\pi$ admits a continuous section $\tau$. Extending this result to the sofic context reveals that, due to constraints imposed by dynamical models, a crucial condition is that $\tau$ should be $\Gamma$-equivariant.
 While examples meeting this condition can be readily constructed from cocycles, it is noteworthy that under such a robust scenario, 
the diagonal action $\Gamma \curvearrowright Y\times G$ becomes isomorphic to $\Gamma \curvearrowright X$ as dynamical systems, mapping $(y, g)$ to $\tau(y)g$. Consequently, by applying Proposition \ref{product case}, we deduce that
$$\mdims(X|Y)=\mdims(Y\times G|Y)\leq \mdims(G).$$
\end{remark}

\section{Conditional sofic metric mean dimension}
\numberwithin{equation}{section}
\setcounter{equation}{0}

In this section, we introduce the metric counterpart of the conditional mean dimension for actions of sofic groups and establish its fundamental properties. Specifically, we compute the value for factor maps of $G$-extensions and extend a result of Shi-Tsukamoto from $\bZ$-actions to actions of sofic groups. The relation between conditional metric mean dimension with relative mean dimension is discussed.

Consider a continuous pseudometric $\rho_X$ on a compact metrizable space $X$ and any $\varepsilon >0$. Recall that a subset $Z$ of $X$ is called {\it $(\varepsilon, \rho_X)$-separated} if $\rho_X(z_1, z_2) \geq \varepsilon$ for all distinct $z_1, z_2 \in Z$. Set $N_\varepsilon(X, \rho_X):=\max_Z|Z|$ for $Z$ ranging over all $(\varepsilon, \rho_X)$-separated subsets of $X$.

In the following, we adapt these notions to the context of factor maps as in \cite[Definition 3.4]{Lu17}. Let  $\pi \colon X \to Y$ be a  factor map between two dynamical systems. 

\begin{definition}
Fix $\varepsilon, \theta > 0$ and  $F \in \cF(\Gamma)$. Consider $\rho_Y$ as a continuous pseudometric on $Y$. For a map $\sigma \colon \Gamma \to \Sym(d)$ and a subset $\mathcal{K} \subseteq X^{[d]}$, we say a subset $\mathcal{E} \subseteq \mathcal{K}$ is {\it $(\varepsilon, \theta, \rho_{X, \infty}|\rho_{Y, \infty})$-separated} if for all distinct $\varphi, \varphi' \in \mathcal{E}$, one has
$$\rho_{Y, \infty}(\pi\circ \varphi, \pi \circ \varphi') \leq \theta \  {\rm and} \ \rho_{X, \infty}(\varphi, \varphi') \geq \varepsilon.$$  We call $\mathcal{E}$ is {\it $(\varepsilon, \rho_{X, \infty}|Y)$-separated} if $\pi\circ \varphi = \pi \circ \varphi'$ and $\rho_{X, \infty}(\varphi, \varphi') \geq \varepsilon$ for every distinct $\varphi, \varphi' \in \mathcal{E}$.

Denote by $N_\varepsilon (\mathcal{K}, \theta, \rho_{X, \infty}|\rho_{Y, \infty})$ the maximal cardinality of all $(\varepsilon, \theta, \rho_{X, \infty}|\rho_{Y, \infty})$-separated subsets of $\mathcal{K}$. Similarly, denote by $N_\varepsilon (\mathcal{K}, \rho_{X, \infty}|Y)$ the maximal cardinality of all $(\varepsilon, \rho_{X, \infty}|Y)$-separated subsets of $\mathcal{K}$. Equivalently, we have 
$$N_\varepsilon(\cK, \rho_{X, \infty}|Y)=\max_{\psi \in Y^d} N_\varepsilon(\cK\cap (\pi^d)^{-1}(\psi), \rho_{X, \infty}).$$

\end{definition}

Let $\Sigma=\{\sigma_i \colon \Gamma \to \Sym(d_i)\}_{i=1}^\infty$ be a sofic approximation of $\Gamma$.
\begin{definition} \cite[Definition 3.4]{Lu17}
Let $\pi \colon X \to Y$ be a factor map. Define
$$h_\Sigma(\rho_X|\rho_Y):=\sup_{\varepsilon > 0} \inf_{\theta > 0} \inf_{F \in \cF(\Gamma)} \inf_{\delta > 0} \limsup_{i\to \infty}\frac{1}{d_i}\log N_\varepsilon(\Map(\rho_X, F, \delta, \sigma_i), \theta, \rho_{X, \infty}|\rho_{Y, \infty}). $$
When $\varepsilon, F, \delta$ are fixed, denote the above limit supremum as $h_\Sigma^\varepsilon(\rho_X|\rho_Y, \theta, F, \delta)$. If  $\Map(\rho, F, \delta, \sigma_i)$ is empty for all sufficiently large $i$, the limit supremum is set to be $-\infty$. In a similar approach as in Definition \ref{top def of mdim}, we have the notations $h_\Sigma^\varepsilon(\rho_X|\rho_Y, \theta)$ and  $h_\Sigma^\varepsilon(\rho_X|\rho_Y)$.

Simultaneously, we set 
$$h_\Sigma(\rho_X|Y):=\sup_{\varepsilon > 0}  \inf_{F \in \cF(\Gamma)} \inf_{\delta > 0} \limsup_{i\to \infty}\frac{1}{d_i}\log N_\varepsilon(\Map(\rho_X, F, \delta, \sigma_i), \rho_{X, \infty}|Y). $$
Moreover, in the same approach as above, we introduce the notations $h_\Sigma^\varepsilon(\rho_X|Y, F, \delta)$, etc.

 When $\rho_X$ and $\rho_Y$ are dynamically generating, the {\it conditional sofic topological entropy $h_\Sigma(X|Y)$ of $\Gamma \curvearrowright X$ relative to $\Gamma \curvearrowright Y$} is defined as  the value $h_\Sigma(\rho_X|\rho_Y)$.
By \cite[Theorem 3.7]{Lu17}, $h_\Sigma(X|Y)$  is independent of the choice of dynamically generating continuous pseudometrics $\rho_X$ and $\rho_Y$. Notably,  when $Y$ is trivial as a singleton, $h_\Sigma(X|Y)$ recovers as the {\it sofic topological entropy $h_\Sigma(X)$ of $\Gamma \curvearrowright X$}. Again, following the same convention as above we have the notations $h_\Sigma^\varepsilon(\rho_X|Y)$, etc.
\end{definition}

A celebrated result of Luo is the equivalence between the conditional topological entropy $h_\Sigma(\rho_X|\rho_Y)$ and $h_\Sigma(\rho_X|Y)$ as follows.
\begin{proposition} \cite[Proposition 3.17]{Lu17} \label{metric celebrated}
    When $\rho_X$ and $\rho_Y$ are dynamical generating, for any $\varepsilon > 0$, the following holds
    $$h_\Sigma^\varepsilon(\rho_X|Y) \leq h_\Sigma^\varepsilon(\rho_X|\rho_Y) \leq h_\Sigma^{\varepsilon/2}(\rho_X|Y).$$    
    In particular, $h_\Sigma(\rho_X|\rho_Y)=h_\Sigma(\rho_X|Y)$.
\end{proposition}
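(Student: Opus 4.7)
Plan:

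The first inequality $h_\Sigma^\varepsilon(\rho_X|Y) \leq h_\Sigma^\varepsilon(\rho_X|\rho_Y)$ is immediate from the definitions. Setting $\cK_i := \Map(\rho_X, F, \delta, \sigma_i)$, any $(\varepsilon, \rho_{X,\infty}|Y)$-separated subset of $\cK_i$ consists of distinct pairs $\varphi, \varphi'$ with $\pi\circ\varphi = \pi\circ\varphi'$, hence $\rho_{Y,\infty}(\pi\circ\varphi, \pi\circ\varphi') = 0 \leq \theta$ for every $\theta > 0$; this makes it automatically $(\varepsilon, \theta, \rho_{X,\infty}|\rho_{Y,\infty})$-separated. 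Therefore $N_\varepsilon(\cK_i, \rho_{X,\infty}|Y) \leq N_\varepsilon(\cK_i, \theta, \rho_{X,\infty}|\rho_{Y,\infty})$, and this survives the $\limsup_i$ as well as the infima over $F, \delta, \theta$.

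For the reverse inequality $h_\Sigma^\varepsilon(\rho_X|\rho_Y) \leq h_\Sigma^{\varepsilon/2}(\rho_X|Y)$ I plan a triangle-inequality lifting argument. Let $\cE$ be a maximal $(\varepsilon, \theta, \rho_{X,\infty}|\rho_{Y,\infty})$-separated subset of $\Map(\rho_X, F', \delta', \sigma_i)$, with $F' \supseteq F$ and $\delta' \leq \delta$ to be refined later. Fix any $\varphi_0 \in \cE$ and set $\psi_0 := \pi^{d_i}(\varphi_0)$; the pairwise separation condition forces $\pi^{d_i}(\cE)$ to sit inside the $\rho_{Y,\infty}$-ball of radius $\theta$ about $\psi_0$. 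I intend to associate with each $\varphi \in \cE$ a lift $\tilde\varphi \in \Map(\rho_X, F, \delta, \sigma_i) \cap (\pi^{d_i})^{-1}(\psi_0)$ satisfying $\rho_{X,\infty}(\tilde\varphi, \varphi) < \varepsilon/4$. The triangle inequality then gives $\rho_{X,\infty}(\tilde\varphi, \tilde\varphi') \geq \varepsilon - 2(\varepsilon/4) = \varepsilon/2$ for distinct pairs, so $\{\tilde\varphi\}_{\varphi \in \cE}$ is an $(\varepsilon/2, \rho_{X,\infty}|Y)$-separated family over $\psi_0$ of cardinality $|\cE|$, yielding
$$N_\varepsilon(\Map(\rho_X, F', \delta', \sigma_i), \theta, \rho_{X,\infty}|\rho_{Y,\infty}) \leq N_{\varepsilon/2}(\Map(\rho_X, F, \delta, \sigma_i), \rho_{X,\infty}|Y).$$

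The main obstacle is constructing the lifts $\tilde\varphi$: since a continuous surjection $\pi : X \to Y$ need not be open, the fiber multifunction can fail to be lower semi-continuous and so pointwise lifting of $\varphi(v)$ into $\pi^{-1}(\psi_0(v))$ within $\rho_X$-distance $\varepsilon/4$ is not automatic from $\rho_Y(\pi\varphi(v), \psi_0(v)) \leq \theta$ alone. To bypass this I would first reduce to compatible metrics $\rho_X, \rho_Y$ via Lemma \ref{Map pseudo}, then use Lemma \ref{Map into} to control $\pi^{d_i}(\Map(\rho_X, F', \delta', \sigma_i)) \subseteq \Map(\rho_Y, F, \delta, \sigma_i)$, and finally exploit the approximate equivariance of $\varphi, \varphi_0$ via Lemma \ref{Map lowerbound} to upgrade the pointwise bound $\rho_Y(\pi\varphi(v), \psi_0(v)) \leq \theta$ into the orbit bound $\rho_{Y, F^\dagger}(\pi\varphi(v), \psi_0(v)) \lesssim \theta$ on a set of coordinates of density $1 - o(1)$ for a large window $F^\dagger$; the dynamically generating hypothesis on $\rho_Y$ then produces, by a uniform continuity and compactness argument, a pointwise lift $\tilde\varphi(v) \in \pi^{-1}(\psi_0(v))$ with $\rho_X(\tilde\varphi(v), \varphi(v)) < \varepsilon/4$ on that good set, while the exceptional coordinates are handled by a partition-of-unity smoothing exactly as in the proofs of Propositions \ref{dynamical reduction} and \ref{second reduction}, ensuring also that $\tilde\varphi \in \Map(\rho_X, F, \delta, \sigma_i)$.

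Assembling the inequality displayed above, taking $\frac{1}{d_i}\log$, $\limsup_i$, and the infima over $F, \delta, \theta$ in the appropriate order gives $h_\Sigma^\varepsilon(\rho_X|\rho_Y) \leq h_\Sigma^{\varepsilon/2}(\rho_X|Y)$. Combining with the first inequality and taking $\sup_{\varepsilon > 0}$ on both sides collapses the chain to the claimed equality $h_\Sigma(\rho_X|\rho_Y) = h_\Sigma(\rho_X|Y)$.
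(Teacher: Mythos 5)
The paper does not actually prove this proposition; it is quoted verbatim from \cite[Proposition 3.17]{Lu17}, so your proof is being judged on its own merits. Your first inequality is correct: an $(\varepsilon,\rho_{X,\infty}|Y)$-separated set is $(\varepsilon,\theta,\rho_{X,\infty}|\rho_{Y,\infty})$-separated for every $\theta>0$, and the quantifiers pass through. Your overall architecture for the second inequality (fix $\varphi_0\in\cE$, lift every $\varphi\in\cE$ into a single fiber of $\pi^{d_i}$ within $\rho_{X,\infty}$-distance $\varepsilon/4$, conclude $\varepsilon/2$-separation by the triangle inequality) is also the right skeleton. The gap is exactly at the step you flag as ``the main obstacle,'' and none of your proposed repairs closes it. You insist on lifting $\varphi(v)$ into the \emph{exact} fiber $\pi^{-1}(\psi_0(v))$ with $\psi_0=\pi\circ\varphi_0$. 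The statement ``$\rho_Y(\pi(x),y_0)$ small $\Rightarrow$ $x$ is close to $\pi^{-1}(y_0)$'' is lower semi-continuity of the fiber map (equivalently, openness of $\pi$), and it is false for general factor maps. Crucially, it remains false no matter which dynamically generating or compatible metric you use and no matter how large an orbit window $F^\dagger$ you control: replacing $\rho_Y$ by $\rho_{Y,F^\dagger}$ or by $\widetilde{\rho_Y}$ only changes the metric on $Y$, not the topology of $\pi$. Likewise the partition-of-unity device of Propositions \ref{dynamical reduction} and \ref{second reduction} controls covering dimension of a bad set; it cannot manufacture a point of a prescribed fiber near $\varphi(v)$, nor can it certify that a perturbed map lies in $\Map(\rho_X,F,\delta,\sigma_i)$.

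The correct repair is to let the target fiber move, exploiting the \emph{upper} semi-continuity of $y\mapsto \pi^{-1}(y)$, which always holds: for every $\eta>0$ there is $\theta>0$ such that for every $y\in Y$ there exists $y'=y'(y)$ with $\pi^{-1}\bigl(\bar B_{\rho_Y}(y,\theta)\bigr)\subseteq \{x: \rho_X(x,\pi^{-1}(y'))<\eta\}$ (cover $Y$ by finitely many balls $B(z_j,r_j)$ with $\pi^{-1}(B(z_j,2r_j))$ inside the $\eta$-neighborhood of $\pi^{-1}(z_j)$, and take $\theta=\min_j r_j$). Applying this coordinatewise with $y=\psi_0(v)$ produces a single $\psi'\in Y^{d_i}$, independent of $\varphi$, and lifts $\tilde\varphi(v)\in\pi^{-1}(\psi'(v))$ with $\rho_X(\tilde\varphi(v),\varphi(v))<\eta$; since the definition of $N_{\varepsilon/2}(\cdot,\rho_{X,\infty}|Y)$ maximizes over \emph{all} $\psi\in Y^{d_i}$, there is no need for $\psi'$ to equal $\psi_0$. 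Two further points your sketch leaves open are then settled by choosing $\eta$ small enough: first, $\eta\le\varepsilon/4$ gives the $\varepsilon/2$-separation; second, and this is a constraint you never address, $\tilde\varphi$ must lie in $\Map(\rho_X,F,\delta,\sigma_i)$, which requires $\eta$ (not $\varepsilon/4$) to be small relative to $\delta$ and to the modulus of equicontinuity of $x\mapsto sx$ for $s\in F$ — this is fine because $F,\delta$ are fixed before $\theta,F',\delta'$ are chosen, but it is a genuine quantifier issue that a perturbation of fixed size $\varepsilon/4$ would violate. With these changes the sandwich and the concluding $\sup_{\varepsilon>0}$ go through as you describe.
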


\begin{definition}
For a factor map $\pi \colon X \to Y$, we define
    $$\mdim_{\Sigma, \rM} (\rho_X|\rho_Y) =\liminf_{\varepsilon \to 0} \frac{1}{\log(1/\varepsilon)} h_\Sigma^\varepsilon(\rho_X|\rho_Y).$$
and    $$\mdim_{\Sigma, \rM} (\rho_X|Y) =\liminf_{\varepsilon \to 0} \frac{1}{\log(1/\varepsilon)} h_\Sigma^\varepsilon(\rho_X|Y).$$
\end{definition}


From Proposition \ref{metric celebrated}, we immediately obtain:
\begin{theorem} \label{equivalent metric definitions}
     For any dynamically generating pseudometric $\rho_X$ and $\rho_Y$ on $X$ and $Y$ respectively, we have $\mdim_{\Sigma, \rM}(\rho_{X}|\rho_Y)=\mdim_{\Sigma, \rM}(\rho_X|Y)$. 
\end{theorem}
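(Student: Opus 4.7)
The plan is to derive the equality directly from Proposition \ref{metric celebrated}, which provides the two-sided comparison $h_\Sigma^\varepsilon(\rho_X|Y) \leq h_\Sigma^\varepsilon(\rho_X|\rho_Y) \leq h_\Sigma^{\varepsilon/2}(\rho_X|Y)$ for every $\varepsilon > 0$. Since $\mdim_{\Sigma,\rM}$ is defined by dividing the sofic conditional topological entropy at scale $\varepsilon$ by $\log(1/\varepsilon)$ and taking $\liminf_{\varepsilon\to 0}$, both inequalities will be promoted to inequalities between the two conditional metric mean dimensions, provided we carefully track the factor $1/2$ introduced on the right-hand side.

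First I would divide each term in Proposition \ref{metric celebrated} by $\log(1/\varepsilon)$, obtaining
\[
\frac{h_\Sigma^\varepsilon(\rho_X|Y)}{\log(1/\varepsilon)} \;\leq\; \frac{h_\Sigma^\varepsilon(\rho_X|\rho_Y)}{\log(1/\varepsilon)} \;\leq\; \frac{h_\Sigma^{\varepsilon/2}(\rho_X|Y)}{\log(1/\varepsilon)}.
\]
Taking $\liminf_{\varepsilon \to 0}$ of the left inequality is immediate and yields $\mdim_{\Sigma,\rM}(\rho_X|Y) \leq \mdim_{\Sigma,\rM}(\rho_X|\rho_Y)$.

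For the reverse inequality, I would substitute $\varepsilon' = \varepsilon/2$ on the right. Writing $\log(1/\varepsilon) = \log(1/\varepsilon') - \log 2$, one gets
\[
\frac{h_\Sigma^{\varepsilon/2}(\rho_X|Y)}{\log(1/\varepsilon)} \;=\; \frac{h_\Sigma^{\varepsilon'}(\rho_X|Y)}{\log(1/\varepsilon')}\cdot\frac{\log(1/\varepsilon')}{\log(1/\varepsilon') - \log 2}.
\]
The second factor tends to $1$ as $\varepsilon' \to 0$, and the non-negativity of the sofic topological entropy (when finite) allows one to pass the limit through the product. Taking $\liminf$ on both sides of the inequality therefore gives $\mdim_{\Sigma,\rM}(\rho_X|\rho_Y) \leq \mdim_{\Sigma,\rM}(\rho_X|Y)$, completing the proof.

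The only mild subtlety lies in justifying the $\liminf$ passage through the product in the second step, which needs a brief remark handling the case where the entropy could be $-\infty$ (when $\Map(\rho_X, F, \delta, \sigma_i)$ is eventually empty); in that degenerate case both conditional metric mean dimensions are $-\infty$ and the equality is trivial. Otherwise the rescaling factor $\log(1/\varepsilon')/(\log(1/\varepsilon') - \log 2) \to 1$ ensures that the $\liminf$ is unchanged, so no extra work beyond Proposition \ref{metric celebrated} is required.
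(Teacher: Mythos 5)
Your proposal is correct and follows exactly the paper's route: the theorem is stated as an immediate consequence of Proposition \ref{metric celebrated}, and your careful handling of the $\varepsilon/2$ rescaling (the factor $\log(1/\varepsilon')/(\log(1/\varepsilon')-\log 2)\to 1$) simply makes explicit what the paper leaves implicit. No further comment is needed.
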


In light of this theorem, we introduce the following definition.
\begin{definition}
   For any continuous pseudometric $\rho_X$ on $X$, we define the {\it conditional sofic metric mean dimension of $\Gamma \curvearrowright X$ relative to $\Gamma \curvearrowright Y$ with respect to $\rho_X$} as 
   $$\mdim_{\Sigma, \rM}(X|Y, \rho_X):=\mdim_{\Sigma, \rM}(\rho_{X}|\rho_Y)$$
   for any dynamically generating pseudometric $\rho_Y$ on $Y$.
\end{definition}


\begin{remark}
   For every $1 \leq p < \infty$ we can also discuss the $p$-version of metric mean dimension (by substituting $\rho_{X, \infty}$ with $\rho_{X, p}$), akin to the concept presented in \cite[Chapter 3]{Lu17}. From the argument of \cite[Proposition 3.17]{Lu17}, one can also deduce a $p$-version of Theorem \ref{equivalent metric definitions}.
\end{remark}

To show that the conditional sofic metric mean dimension is indeed a generalization upon from amenable group actions, we need to first recall the definition of conditional metric mean dimension for amenable group actions introduced in \cite[Definition 4.1]{L22}.  
\begin{definition}
    Let $\Gamma$ be an amenable group and $\rho_X$ a dynamically generating continuous pseudometric on $X$. The {\it conditional metric mean dimension of $\Gamma \curvearrowright X$ relative to $\Gamma \curvearrowright Y$ with respect to $\rho_X$} is defined as
    $$\mdim_\rM(X|Y,\rho_X):=\liminf_{\varepsilon \to 0} \frac{1}{\log(1/\varepsilon)} \limsup_F \frac{\max_{y \in Y} \log N_\varepsilon(\pi^{-1}(y), \rho_{X, F})}{|F|}.$$
    When $\varepsilon$ is fixed, the above limit supremum is denoted by $h_\varepsilon(\rho_X|Y)$.
\end{definition}

\begin{theorem} \label{metric reduction}
Let $\pi \colon X \to Y$ be a factor map under the actions of a countable amenable group $\Gamma$. Fix a dynamically generating continuous pseudometric $\rho_X$ on $X$. 
Then
$\mdim_{\Sigma, \rM}(X|Y, \rho_X) = \mdim_\rM(X|Y, \rho_X)$.
\end{theorem}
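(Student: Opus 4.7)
The plan is to mimic, at the level of separated subsets, the two-sided comparison already carried out in Lemmas \ref{mdimS larger} and \ref{mdimS smaller} for the topological mean dimension. By Theorem \ref{equivalent metric definitions} we may replace $\mdim_{\Sigma, \rM}(\rho_X|\rho_Y)$ by $\mdim_{\Sigma, \rM}(\rho_X|Y)$, so the whole task reduces to establishing two inequalities of the form
\[
   h_\varepsilon(\rho_X|Y) \;\leq\; h_\Sigma^\varepsilon(\rho_X|Y) \;\leq\; h_{\varepsilon'}(\rho_X|Y),
\]
for a suitable $\varepsilon' \leq \varepsilon$ (in practice $\varepsilon'=\varepsilon/3$). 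Once these are in hand, dividing by $\log(1/\varepsilon)$ and taking $\liminf_{\varepsilon\to 0}$ delivers the equality of metric mean dimensions, since replacing $\varepsilon$ by a constant multiple does not alter the liminf.

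For the lower bound, fix $\beta>0$, a finite $F\in\cF(\Gamma)$, a $y\in Y$, and an $(\varepsilon,\rho_{X,F})$-separated subset $E\subseteq \pi^{-1}(y)$ of near-maximal size. Apply Lemma \ref{amenable generalize} (quasi-tiling) to obtain disjoint $\{\sigma(F_k)\cC_k\}_{k=1}^\ell$ with $F\subseteq F_1$, then choose injective coding maps $\psi_k\colon \cC_k\to\Gamma$ as in the proof of Lemma \ref{mdimS larger} and build $\varphi_x\colon[d]\to X$ from each $x\in E$ by $\varphi_x(\sigma_s(c))=s\psi_k(c)x$ (and $\varphi_x(v)=x_0$ on the remainder). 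For good enough $\sigma$, every $\varphi_x$ lies in $\Map(\rho_X, F', \delta, \sigma)$. The crucial point is that since $\pi(x)=y$ for every $x\in E$, the composition $\pi\circ\varphi_x$ does \emph{not} depend on $x$, so $\{\varphi_x\}_{x\in E}$ sits entirely in a single fiber of $\pi^d$. Separation in $\rho_{X,\infty}$ comes from reading off a coordinate $v=\sigma_s(c)\in \sigma(F)\cC_1$ at which $\rho_X(\varphi_x(v),\varphi_{x'}(v))=\rho_X(sx,sx')\geq \varepsilon$. Passing to the liminf in $\sigma_i$ and then along $F$ sufficiently invariant recovers $h_\varepsilon(\rho_X|Y)$ up to $\beta$.

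For the upper bound, let $\mathcal{E}\subseteq \Map(\rho_X, F^{-1}, \delta, \sigma)$ be $(\varepsilon,\rho_{X,\infty}|Y)$-separated. Partition $\mathcal{E}$ by the common image $\psi=\pi\circ\varphi$ and work one $\psi$ at a time. Using the partition-of-unity device from Lemma \ref{mdimS smaller}, augment each $\varphi$ by the indicator $\overrightarrow{h}(\varphi)$ recording where approximate equivariance fails; the auxiliary coordinate lives in a set of log-cardinality $O(\beta d)$. Quasi-tiling then shows that on the good set two maps $\varphi,\varphi'$ with $\pi\circ\varphi=\pi\circ\varphi'$ and $\rho_{X,\infty}(\varphi,\varphi')\geq \varepsilon$ must have their restrictions to some $\cC_k$ satisfy $\rho_{X,F_k}(\varphi(c),\varphi'(c))\geq \varepsilon/3$, with $\varphi(c),\varphi'(c)\in\pi^{-1}(\psi(c))$. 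Hence
\[
  |\mathcal{E}| \leq 2^{O(\beta d)}\cdot \prod_{k=1}^\ell\prod_{c\in\cC_k} \max_{y\in Y} N_{\varepsilon/3}\bigl(\pi^{-1}(y),\rho_{X,F_k}\bigr)\cdot N_{\varepsilon/3}(X,\rho_X)^{|\cZ|},
\]
where $\cZ=[d]\setminus\bigsqcup_k\sigma(F_k)\cC_k$ has size $\leq\tau d$. Taking logs, dividing by $d$, and noting $\sum_k |\cC_k||F_k|\leq d$ bounds the normalized log-count by $h_{\varepsilon/3}(\rho_X|Y)+O(\beta+\tau)$.

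The main obstacle is the upper bound, specifically the descent from $\rho_{X,\infty}$-separation of the $\varphi$'s to $\rho_{X,F_k}$-separation of their restrictions $\varphi|_{\cC_k}$ within individual fibers of $\pi$, because two $\varphi\neq\varphi'$ could a priori agree on every $\cC_k$ and differ only on the bad remainder $\cZ$. This is exactly the difficulty that the $\overrightarrow{h}$-trick was designed to resolve in the width-dimension setting, and the analogous bookkeeping for separated sets is what requires the most care; every other step is a direct translation from the mean dimension argument.
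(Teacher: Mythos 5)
Your overall strategy --- reduce via Theorem \ref{equivalent metric definitions} to a two-sided comparison of $h_\Sigma^\varepsilon(\rho_X|Y)$ with $h_{\varepsilon'}(\rho_X|Y)$ and then divide by $\log(1/\varepsilon)$ --- is exactly the paper's, except that the paper does not re-derive the two inequalities: it simply quotes them from the proofs of Lemmas 4.8 and 4.9 of Luo's thesis \cite{Lu17}. Your reconstruction of the upper bound is structurally sound: partitioning by the common $\pi^d$-image (which is automatic for an $(\varepsilon,\rho_{X,\infty}|Y)$-separated set), using the $\overrightarrow{h}$-data to absorb the bad coordinates, and bounding the remaining data by per-tile fiberwise spanning numbers is the correct separated-set analogue of Lemma \ref{mdimS smaller}.

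The lower bound, however, has a genuine normalization gap. The family $\{\varphi_x\}_{x\in E}$ you construct has cardinality $|E|=N_\varepsilon(\pi^{-1}(y),\rho_{X,F})$, which is bounded in terms of the \emph{fixed} window $F$ and does not grow with $d$; hence $\frac{1}{d}\log|\{\varphi_x\}_{x\in E}|\to 0$ as $d=d_i\to\infty$, and the construction only certifies $h_\Sigma^\varepsilon(\rho_X|Y)\geq 0$, not $\geq h_\varepsilon(\rho_X|Y)-\beta$. This is where the entropy argument genuinely diverges from Lemma \ref{mdimS larger}: there the single embedded copy of $X$ suffices because it carries the pseudometric $\rho_{X,\tilde{F}}$ with $|\tilde{F}|\geq(1-\tau)d$, and $\Wdim_\varepsilon(\rho_{X,\tilde{F}}|\cdot)$ grows linearly in $|\tilde{F}|\approx d$; the linear-in-$d$ growth comes from the size of the window, not from the number of embedded copies. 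To repair your argument you must either (i) take $E$ to be $(\varepsilon,\rho_{X,\tilde{F}})$-separated in $\pi^{-1}(y)$, so that $\log|E|$ is of order $h\cdot|\tilde{F}|$ --- and then address the fact that $h_\varepsilon(\rho_X|Y)$ is only a limit supremum over the net, so the particular $\tilde{F}$ produced by the tiling need not witness it --- or (ii) follow the standard Kerr--Li/Luo scheme of making an \emph{independent} choice of fiber point on each tile $\sigma(F_k)c$ and multiplying the per-tile counts $N_\varepsilon(\pi^{-1}(y_{k,c}),\rho_{X,F_k})$; your key observation that all resulting maps share the same image under $\pi^d$ survives either repair, since the base points in $Y$ do not depend on the choices. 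A smaller but related defect: your separation witness $\rho_X(\varphi_x(\sigma_s(c)),\varphi_{x'}(\sigma_s(c)))=\rho_X(sx,sx')$ presumes $\psi_1(c)=e_\Gamma$, which mere injectivity of the coding maps does not give; in general that coordinate computes $\rho_X(s\psi_k(c)x,s\psi_k(c)x')$, so separation must be taken with respect to $\rho_{X,\tilde{F}}$ rather than $\rho_{X,F}$, consistent with repair (i).
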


\begin{proof}
 Fix a dynamically generating continuous pseudometric $\rho_Y$ on $Y$ and  $\varepsilon > 0$.   From the proof of \cite[Lemma 4.8]{Lu17}, we infer that  for every $\beta > 0$,
 $$h_\varepsilon(\rho_X|Y) \leq h^\varepsilon_\Sigma(\rho_X|\rho_Y) + 2\beta.$$
Consequently, we obtain $\mdim_\rM(\rho_X|Y) \leq \mdim_{\Sigma, \rM}(\rho_X|\rho_Y)$.
From the proof of \cite[Lemma 4.9]{Lu17}, we infer that $ h^\varepsilon_\Sigma(\rho_X|Y) \leq h_{\varepsilon/5}(\rho_X|Y)$ and hence 
$$\mdim_{\Sigma, \rM}(\rho_X|Y) \leq \mdim_\rM(\rho_X|Y).$$
By Theorem \ref{equivalent metric definitions}, the desired equality follows.
\end{proof}

Adapting the proof of \cite[Lemma 4.4]{Li13} into the conditional case, we obtain

\begin{proposition} \label{suppressing equality}
    Let $\rho_X$  be a dynamically generating pseudometric on $X$ and the compatible metric $\widetilde{\rho_X}$ induced as in (\ref{compatible induced}).  Then for any factor map $\pi \colon X \to Y$, we have
    $$\mdim_{\Sigma, \rM}(X|Y, \rho_X)=\mdim_{\Sigma, \rM}(X|Y, \widetilde{\rho_X}).$$
\end{proposition}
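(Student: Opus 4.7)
By Theorem \ref{equivalent metric definitions} it suffices to compare $\mdim_{\Sigma, \rM}(\rho_X|Y)$ and $\mdim_{\Sigma, \rM}(\widetilde{\rho_X}|Y)$. The easy direction $\mdim_{\Sigma, \rM}(\rho_X|Y) \leq \mdim_{\Sigma, \rM}(\widetilde{\rho_X}|Y)$ follows from the fact that $s_1=e_\Gamma$ forces $\widetilde{\rho_X} \geq \tfrac{1}{2}\rho_X$ and hence $\rho_{X,\infty} \leq 2\widetilde{\rho_X}_\infty$ on $X^{[d]}$: combined with Lemma \ref{Map pseudo} (used to arrange $\Map(\rho_X, F_2, \delta_2, \sigma) \subseteq \Map(\widetilde{\rho_X}, F_1, \delta_1, \sigma)$), every $(\varepsilon, \rho_{X,\infty}|Y)$-separated set in the $\rho_X$-Map is $(\varepsilon/2, \widetilde{\rho_X}_\infty|Y)$-separated in the $\widetilde{\rho_X}$-Map, yielding $h_\Sigma^\varepsilon(\rho_X|Y) \leq h_\Sigma^{\varepsilon/2}(\widetilde{\rho_X}|Y)$ and the desired inequality after dividing by $\log(1/\varepsilon)$ and taking $\liminf$.

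For the reverse inequality, I would adapt the two-step coding underlying \cite[Lemma 4.4]{Li13} to the conditional setting. Fix $\varepsilon, \beta > 0$ and $F_2 \in \cF(\Gamma)$, $\delta_2 > 0$. Choose $N$, set $F_1^\ast = F_2 \cup \{s_1, \dots, s_N\}$, and pick $\delta_1^\ast \leq \delta_2$ small enough that $2\sqrt{\delta_1^\ast} + 2^{-N}{\rm diam}(X, \rho_X) < \varepsilon/2$, while both the binary entropy $H(|F_1^\ast|\delta_1^\ast)$ and $|F_1^\ast|\delta_1^\ast\log_2 N_{\varepsilon/2}(X, \widetilde{\rho_X})$ are smaller than $\beta$. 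By Lemma \ref{Map pseudo} applied with $\widetilde{\rho_X}$ dynamically generating, pick $F_1, \delta_1$ so that $\Map(\widetilde{\rho_X}, F_1, \delta_1, \sigma) \subseteq \Map(\rho_X, F_1^\ast, \delta_1^\ast, \sigma)$ for good enough $\sigma$. Let $\mathcal{E} \subseteq \Map(\widetilde{\rho_X}, F_1, \delta_1, \sigma) \cap (\pi^d)^{-1}(\psi)$ be an $(\varepsilon, \widetilde{\rho_X}_\infty)$-separated set for some $\psi \in Y^{[d]}$ attaining the maximum in $N_\varepsilon(\cdot, \widetilde{\rho_X}_\infty|Y)$, and attach to each $\varphi \in \mathcal{E}$ two fingerprints: (i) the indicator $\chi_\varphi$ of its $\rho_X$-good set $\mathcal{W}_\varphi \subseteq [d]$ from Lemma \ref{Map lowerbound} (with $|[d]\setminus\mathcal{W}_\varphi| \leq |F_1^\ast|\delta_1^\ast d$); (ii) a codeword $c_\varphi$ recording, for each $v \notin \mathcal{W}_\varphi$, which ball of a fixed cover of $X$ by $N_{\varepsilon/2}(X, \widetilde{\rho_X})$ balls of $\widetilde{\rho_X}$-diameter $<\varepsilon/2$ contains $\varphi(v)$. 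The number of distinct $(\chi_\varphi, c_\varphi)$ is at most $2^{2\beta d}$.

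Within each class $\mathcal{E}_{\chi, c}$ the elements share a common good set $\mathcal{W}$, and for distinct $\varphi, \varphi' \in \mathcal{E}_{\chi, c}$ one has $\max_{v \notin \mathcal{W}} \widetilde{\rho_X}(\varphi(v), \varphi'(v)) < \varepsilon/2$, so the separation $\widetilde{\rho_X}_\infty(\varphi, \varphi') \geq \varepsilon$ must be realized at some $v \in \mathcal{W}$. For such $v$, truncating the series (\ref{compatible induced}) at $n=N$ and using $\rho_X(s_n\varphi(v), \varphi(\sigma_{s_n}v)) \leq \sqrt{\delta_1^\ast}$ yields $\widetilde{\rho_X}(\varphi(v), \varphi'(v)) \leq \rho_{X,\infty}(\varphi, \varphi') + 2\sqrt{\delta_1^\ast} + 2^{-N}{\rm diam}(X, \rho_X) < \rho_{X,\infty}(\varphi, \varphi') + \varepsilon/2$, forcing $\rho_{X,\infty}(\varphi, \varphi') > \varepsilon/2$. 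Hence $\mathcal{E}_{\chi, c}$ is $(\varepsilon/2, \rho_{X,\infty}|Y)$-separated in $\Map(\rho_X, F_2, \delta_2, \sigma) \cap (\pi^d)^{-1}(\psi)$, so $|\mathcal{E}_{\chi, c}| \leq N_{\varepsilon/2}(\Map(\rho_X, F_2, \delta_2, \sigma), \rho_{X,\infty}|Y)$ and $\log|\mathcal{E}| \leq 2\beta d \log 2 + \log N_{\varepsilon/2}(\Map(\rho_X, F_2, \delta_2, \sigma), \rho_{X,\infty}|Y)$. Dividing by $d$ and then successively taking $\limsup_i$, $\inf_{F_2, \delta_2}$, $\beta \to 0$, dividing by $\log(1/\varepsilon)$, and $\liminf_{\varepsilon \to 0}$ delivers $\mdim_{\Sigma, \rM}(\widetilde{\rho_X}|Y) \leq \mdim_{\Sigma, \rM}(\rho_X|Y)$.

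The hard step is the bad-set handling: a witness of $\widetilde{\rho_X}$-separation between $\varphi$ and $\varphi'$ may lie at a point outside their common $\rho_X$-good set, where no approximate equivariance is available and the direct truncation bound fails. The two-level coding forces the witness back onto the good set at the price of only a $2^{O(\beta d)}$ multiplicative factor, which is absorbed upon letting $\beta \to 0$ and passing to $\liminf$ in $\varepsilon$. This mirrors in spirit the partition-of-unity/bad-set corrections already used in Propositions \ref{dynamical reduction} and \ref{second reduction}.
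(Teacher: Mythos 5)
Your argument is correct and is essentially the paper's proof: the paper simply invokes the proof of \cite[Lemma 4.4]{Li13}, observing that the two-sided comparison $h_\Sigma^{4\varepsilon}(\widetilde{\rho_X}|Y)\leq h_\Sigma^{\varepsilon}(\rho_X|Y)\leq h_\Sigma^{\varepsilon/2}(\widetilde{\rho_X}|Y)$ persists fiberwise, and your write-up is exactly that argument (the cheap direction from $\widetilde{\rho_X}\geq\tfrac12\rho_X$, and the truncation-plus-bad-set-coding for the other) carried out within a fixed fiber $(\pi^d)^{-1}(\psi)$, which the coding indeed preserves. The only nit is cosmetic: a cover of $X$ by sets of $\widetilde{\rho_X}$-diameter $<\varepsilon/2$ need not have cardinality $N_{\varepsilon/2}(X,\widetilde{\rho_X})$, but replacing this by any finite covering number at scale $\varepsilon/4$ leaves the $2^{O(\beta d)}$ count, and hence the conclusion, unchanged.
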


\begin{proof}
Fix a compatible metric $\rho_Y$ on $Y$. From  the proof of \cite[Lemma 4.4]{Li13},  for any $\varepsilon > 0$, we have $h_\Sigma^{4\varepsilon}(\widetilde{\rho_X}|Y)\leq h_\Sigma^{\varepsilon}(\rho_X|Y)\leq h_\Sigma^{\varepsilon/2}(\widetilde{\rho_X}|Y)$ and hence the conclusion follows.
\end{proof}


Building upon the results of \cite[Chapter 5]{Lu17} for $G$-extensions, we can extend the computation result in \cite[Proposition 4.3]{L22} to the sofic context. 
\begin{proposition}  \label{metric G-extension}
    Let $\pi \colon X \to Y$ be a $G$-extension map. Suppose that $\rho_X$ and  $\rho_G$ are compatible metrics on $X$ and $G$ respectively satisfying that 
    $$\rho_X(xg, xg')=\rho_G(g, g')$$
    for every $x \in X$ and $g, g' \in G$. Then $\mdim_{\Sigma, \rM}(\rho_X|Y)\leq \mdim_{\Sigma, \rM}(\rho_G)$
    and the equality holds if $\mdim_{\Sigma, \rM}(\rho_X)\geq 0$.
\end{proposition}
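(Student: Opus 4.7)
The plan is to mimic the arguments of Lemmas \ref{lower G-bound} and \ref{upper G-bound}, but with the isometric assumption $\rho_X(xg, xg') = \rho_G(g, g')$ playing the crucial role: in those earlier lemmas one had to shrink $\varepsilon$ to $\varepsilon'$ because the action map $X \times G \to X$ was only uniformly continuous, but here the isometry makes that conversion exact, which is essential since metric mean dimension involves the scaling $1/\log(1/\varepsilon)$.

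First I would establish the key isometric fiber correspondence. For any $\psi \in Y^d$ and any fixed $\varphi_0 \in (\pi^d)^{-1}(\psi)$, every $\varphi \in (\pi^d)^{-1}(\psi)$ can be written uniquely as $\varphi(v) = \varphi_0(v)\, g_{\varphi(v)}$ with $g_{\varphi(v)} \in G$. The map $\Xi_{\varphi_0} \colon (\pi^d)^{-1}(\psi) \to G^{[d]}$ sending $\varphi$ to $v \mapsto g_{\varphi(v)}$ is then an isometry for $\rho_{X,\infty}$ and $\rho_{G,\infty}$, since for every $v \in [d]$,
$$\rho_X(\varphi(v), \varphi'(v)) = \rho_X(\varphi_0(v)g_{\varphi(v)}, \varphi_0(v)g_{\varphi'(v)}) = \rho_G(g_{\varphi(v)}, g_{\varphi'(v)}).$$

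For the upper bound $\mdim_{\Sigma, \rM}(\rho_X|Y) \leq \mdim_{\Sigma, \rM}(\rho_G)$, fix $\varepsilon, \delta > 0$ and $F \in \cF(\Gamma)$. Repeating the Claim proof in Lemma \ref{upper G-bound} (which only uses uniform continuity of $X \times G \to X$, hence certainly holds here), we find $\delta' > 0$ depending only on $F$ and $\delta$ such that $\Xi_{\varphi_0}(\varphi) \in \Map(\rho_G, F, \delta', \sigma)$ for every $\varphi \in \Map(\rho_X, F, \delta, \sigma) \cap (\pi^d)^{-1}(\psi)$ and every good enough $\sigma$. Then an $(\varepsilon, \rho_{X,\infty}|Y)$-separated subset of $\Map(\rho_X, F, \delta, \sigma)$ (obtained by restricting to the fiber over a $\psi$ achieving the maximum) maps, via $\Xi_{\varphi_0}$ and the isometry, to an $(\varepsilon, \rho_{G,\infty})$-separated subset of $\Map(\rho_G, F, \delta', \sigma)$. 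Taking logarithms, dividing by $d$, passing to $\limsup$ over $\Sigma$, infimum over $F$ and $\delta'$, then dividing by $\log(1/\varepsilon)$ and taking $\liminf$ as $\varepsilon \to 0$ yields the desired bound, after invoking Theorem \ref{equivalent metric definitions} on the left.

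For the lower bound under the assumption $\mdim_{\Sigma, \rM}(\rho_X) \geq 0$, this hypothesis guarantees that $\Map(\rho_X, F, \delta, \sigma_i)$ is nonempty for all sufficiently large $i$, so we may select $\varphi_0$ there and set $\psi_0 = \pi \circ \varphi_0$. Given $\varphi \in \Map(\rho_G, F, \delta, \sigma)$, define $\widetilde{\varphi}(v) := \varphi_0(v)\varphi(v)$. Following verbatim the estimate in Lemma \ref{lower G-bound} (with $\delta'$ chosen appropriately in terms of $\delta$ and $F$, using uniform continuity of the action map), we obtain $\widetilde{\varphi} \in \Map(\rho_X, F, \delta, \sigma)$ and $\pi \circ \widetilde{\varphi} = \psi_0$. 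By the isometric assumption, $\rho_{X,\infty}(\widetilde{\varphi}, \widetilde{\varphi'}) = \rho_{G,\infty}(\varphi, \varphi')$ exactly, so every $(\varepsilon, \rho_{G,\infty})$-separated subset of $\Map(\rho_G, F, \delta', \sigma)$ yields an $(\varepsilon, \rho_{X,\infty}|Y)$-separated subset of $\Map(\rho_X, F, \delta, \sigma) \cap (\pi^d)^{-1}(\psi_0)$ of the same cardinality. The corresponding chain of inequalities on $N_\varepsilon$, followed by the standard $\limsup$/$\inf$/$\liminf$ operations, gives $\mdim_{\Sigma,\rM}(\rho_G) \leq \mdim_{\Sigma,\rM}(\rho_X|Y)$.

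The main obstacle is bookkeeping: I must verify that the parameter conversion $\delta \leftrightarrow \delta'$ and the quantifier order in the definition of $\mdim_{\Sigma,\rM}$ (namely $\inf_F \inf_\delta \limsup_i$ inside $\liminf_{\varepsilon \to 0}$) respect the direction of each inequality, and, critically, that the isometric property lets me use the \emph{same} $\varepsilon$ on both sides of both inequalities—otherwise the $\log(1/\varepsilon)$ normalization would introduce an incompatible factor and only give an inequality up to a constant.
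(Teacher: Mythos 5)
Your proposal is correct and follows essentially the same route as the paper, which simply delegates the two inequalities to the proofs of Luo's Lemmas 5.3 and 5.7 on conditional sofic entropy of $G$-extensions; you have unpacked those arguments (the fiber correspondence $\varphi \mapsto \widetilde{\varphi}$ via the $G$-action in each direction) and correctly identified the one point the paper leaves implicit, namely that the isometry hypothesis $\rho_X(xg,xg')=\rho_G(g,g')$ keeps the separation scale $\varepsilon$ unchanged on both sides, which is exactly what the $\log(1/\varepsilon)$ normalization requires.
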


\begin{proof}
  Under our assumption on $\rho_X$ and $\rho_G$, for every $\varepsilon > 0$, the proof of \cite[Lemma 5.3]{Lu17} implies that $h_{\Sigma}^\varepsilon (\rho_X|Y)\leq h_{\Sigma}^\varepsilon(\rho_G)$. Moreover, if $\mdim_{\Sigma, \rM}(\rho_X)\geq 0$, the proof of \cite[Lemma 5.7]{Lu17} implies that $h_{\Sigma}^\varepsilon (\rho_X|Y)\geq h_{\Sigma}^\varepsilon(\rho_G)$. Hence our conclusion follows. 
\end{proof}

The following is a variant of Gromov's waist inequality \cite[Corollary 2.4]{ST23}.
\begin{lemma} \label{waist inequality}
  Let $n \geq m \geq 1$ be two integers. Then for any continuous map $f \colon [0, 1]^n \to \bR^m$ there exists $p \in \bR^m$ such that for every $0 < \varepsilon < 1/2$, one has
  $$N_{\varepsilon/2}(f^{-1}(p), ||\cdot||_\infty) \geq \frac{1}{8^n} \left(\frac{1}{\varepsilon}\right)^{n-m}$$
where $||\cdot||_\infty$ is understood as the metric induced from the $\ell^\infty$-norm on $[0, 1]^n$.
\end{lemma}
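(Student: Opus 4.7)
The plan is to derive the lemma from a Gromov-style waist inequality for maps out of the cube, which provides a point $p \in \bR^m$ whose fiber $f^{-1}(p)$ has a thick $\ell^\infty$-neighborhood, and then to convert that volume lower bound into a lower bound on the separated number through a routine packing argument.

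First I would invoke (or extract from) the waist inequality underlying the reference [ST23, Corollary 2.4]: for every continuous $f \colon [0,1]^n \to \bR^m$ there exists $p \in \bR^m$, valid at all scales simultaneously, such that for every $\eta \in (0, 1/2)$,
$$ {\rm vol}\Big(\{x \in [0,1]^n : d_\infty(x, f^{-1}(p)) \leq \eta\}\Big) \geq c_{n,m}\, \eta^m,$$
where $c_{n,m}$ is an essentially universal constant obtained by comparison with the $\eta$-neighborhood of a standard coordinate $(n-m)$-plane inside the cube. The content of the waist inequality is precisely that no fiber can be thinner, on average, than such a flat slice.

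Next, for a given $\varepsilon \in (0, 1/2)$, I would take a maximal $(\varepsilon/2)$-separated subset $E \subseteq f^{-1}(p)$ with respect to $||\cdot||_\infty$, of cardinality $N := N_{\varepsilon/2}(f^{-1}(p), ||\cdot||_\infty)$. Maximality gives $f^{-1}(p) \subseteq \bigcup_{x \in E} B^\infty(x, \varepsilon/2)$, so enlarging each ball by $\varepsilon/2$ yields
$$ \{x \in [0,1]^n : d_\infty(x, f^{-1}(p)) \leq \varepsilon/2\} \subseteq \bigcup_{x \in E} B^\infty(x, \varepsilon),$$
whose total volume is at most $N \cdot (2\varepsilon)^n$. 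Inserting $\eta = \varepsilon/2$ into the waist inequality and combining the two estimates gives $c_{n,m}(\varepsilon/2)^m \leq N(2\varepsilon)^n$, i.e.\ $N \geq (c_{n,m}/2^{m+n})(1/\varepsilon)^{n-m}$. Since $m \leq n$ and $c_{n,m}$ can be taken of order one, the prefactor $c_{n,m}/2^{m+n}$ dominates $1/8^n = 1/2^{3n}$ with comfortable slack, delivering the claimed estimate.

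The main obstacle is really just a careful bookkeeping of constants: the cited corollary may be phrased with Euclidean rather than $\ell^\infty$ neighborhoods, in which case an elementary comparison (losing at most a factor polynomial in $n$) must be inserted before the packing step. The $8^{-n}$ slack in the target inequality is deliberately generous to absorb any such loss, so once the waist inequality is secured in a usable form, no genuinely hard step remains.
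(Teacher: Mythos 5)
Your argument is correct, but it runs one level deeper than the paper's, which is a two-line reduction: the paper quotes \cite[Corollary 2.4]{ST23} directly in its covering-number form --- there exists $p$ with $S_{\varepsilon}(f^{-1}(p),\|\cdot\|_\infty)\geq 8^{-n}(1/\varepsilon)^{n-m}$, where $S_\varepsilon$ is the minimal cardinality of an open cover by sets of $\|\cdot\|_\infty$-diameter $<\varepsilon$ --- and then only needs the elementary comparison $N_{\varepsilon/2}\geq S_\varepsilon$ between separated and covering numbers. You instead go back to the volume form of the waist inequality (a single $p$ whose $\eta$-neighborhoods all have volume $\gtrsim \eta^m$) and convert it to a separated-set bound by the standard packing argument; in effect you re-prove the cited corollary rather than use it. Both routes are sound, and you correctly insist on the one nontrivial feature that must be preserved, namely that the point $p$ is uniform over all scales. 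Two small cautions on your version: the constant $c_{n,m}$ is not quite ``of order one'' if the neighborhood in the waist inequality is Euclidean (the volume $\omega_m$ of the unit ball and the truncation by the cube boundary both enter, the former decaying superexponentially in $m$), so the claim that $c_{n,m}/2^{n+m}$ comfortably dominates $8^{-n}$ needs the observation that the critical regime $m$ close to $n$ is harmless --- indeed for $m=n$ the asserted bound is $\leq 1$ and holds trivially since the fiber is nonempty. With that bookkeeping done, your proof is complete; the paper's is shorter only because \cite{ST23} has already performed exactly this volume-to-cardinality conversion.
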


\begin{proof}
  For any closed subset $K \subseteq [0, 1]^n$, denote by $S_\varepsilon(K, ||\cdot||_\infty)$ the minimal number $|\cV|$ of a finite open cover $\cV$ of $K$ such that ${\rm diam} (V, ||\cdot||_\infty) < \varepsilon$ for every $V \in \cV$.  Then it is readily checked that $N_{\varepsilon/2}(K, ||\cdot||_\infty) \geq S_\varepsilon(K, ||\cdot||_\infty)$.  From \cite[Corollary 2.4]{ST23}, there exists $p \in \bR^m$ such that 
    $$ S_{\varepsilon}(f^{-1}(p), ||\cdot||_\infty) \geq \frac{1}{8^n} \left(\frac{1}{\varepsilon}\right)^{n-m}.$$
Then the conclusion follows.
\end{proof}

The following classical result of Hurewicz says that although an $n$-dimensional space $X$ may not embeds into $[0, 1]^{n+1}$ in general,  
there are rich of continuous maps from $X$ to $[0, 1]^{n+1}$ with controlled fibers \cite[Corollary 2.4]{T23} \cite{K68}. 
\begin{lemma} \label{Kur}
    Let $X$ be an $n$-dimensional compact metrizable space. Consider the space $C(X, [0,1]^{n+1})$ comprising all continuous maps from $X$ to $[0, 1]^{n +1}$ equipped with the topology induced from the $L^\infty$-norm. Then there exists a dense $G_\delta$ subset $\mathcal{A}$ of $C(X, [0,1]^{n+1})$ such that for any $f \in \mathcal{A}$, one has 
    $$\sup_{y \in [0, 1]^{n+1}} |f^{-1}(y)| \leq n+1.$$
\end{lemma}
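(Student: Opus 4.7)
My plan is to apply the Baire category theorem to the complete metric space $C(X, [0,1]^{n+1})$ (equipped with the sup norm) by writing the desired set $\mathcal{A}$ as a countable intersection of dense open subsets. Fix a compatible metric $d_X$ on $X$. For each $m \in \mathbb{N}$, define
\[
\mathcal{A}_m := \{f \in C(X, [0,1]^{n+1}) : \text{no } n+2 \text{ points of } X \text{ with pairwise distance} \geq 1/m \text{ share a common } f\text{-value}\}.
\]
Since any $n+2$ distinct points of $X$ have positive minimum pairwise distance, $\bigcap_m \mathcal{A}_m \subseteq \{f : \sup_{y} |f^{-1}(y)| \leq n+1\}$, so it suffices to prove each $\mathcal{A}_m$ is open and dense, whence $\mathcal{A} := \bigcap_m \mathcal{A}_m$ is the required dense $G_\delta$.

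Openness of $\mathcal{A}_m$ follows from a standard compactness argument: if $f_k \to f$ uniformly with $f_k \notin \mathcal{A}_m$ witnessed by tuples $(x_1^{(k)}, \ldots, x_{n+2}^{(k)})$, then compactness of $X^{n+2}$ yields a subsequential limit whose pairwise distances are still $\geq 1/m$ (so its coordinates are distinct) and whose coordinates are sent by $f$ to a single value, so $f \notin \mathcal{A}_m$.

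Density of $\mathcal{A}_m$ is the heart of the matter. Given $f$ and $\eta > 0$, I would use $\dim X \leq n$ to choose a finite open cover $\mathcal{U} = \{U_1, \ldots, U_N\}$ of $X$ of order at most $n+1$, with mesh smaller than $1/m$ and small enough that the $f$-oscillation on each $U_i$ is below $\eta/2$. Pick a partition of unity $\{\zeta_i\}$ subordinate to $\mathcal{U}$ and points $p_i \in [0,1]^{n+1}$ within $\eta/2$ of $f(U_i)$, and set $f'(x) := \sum_i \zeta_i(x) p_i$, so that $\|f - f'\|_\infty < \eta$. A small perturbation of the $p_i$'s can be arranged to put them in \emph{general position}, meaning: for every family of pairwise disjoint index sets $S_1, \ldots, S_{n+2} \subseteq \{1, \ldots, N\}$ with $|S_j| \leq n+1$, one has $\bigcap_{j=1}^{n+2}\operatorname{conv}(p_i : i \in S_j) = \emptyset$. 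This is a generic (finite set of) conditions because each $\operatorname{conv}(p_i : i \in S_j)$ has dimension at most $n$, hence codimension at least one in $\mathbb{R}^{n+1}$, and a generic intersection of $n+2$ codimension-one subsets of $\mathbb{R}^{n+1}$ has codimension at least $n+2 > n+1$, hence is empty. Given such $p_i$'s, if $x_1, \ldots, x_{n+2}$ were pairwise at distance $\geq 1/m$ with common $f'$-image $y$, the index sets $S_j := \{i : x_j \in U_i\}$ would be pairwise disjoint (mesh $< 1/m$), of size at most $n+1$ (order of $\mathcal{U}$), and satisfy $y \in \operatorname{conv}(p_i : i \in S_j)$ for every $j$, contradicting general position. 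Thus $f' \in \mathcal{A}_m$, and density is established.

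The main obstacle is the general position/transversality lemma, which is a classical but somewhat finicky combinatorial-geometric fact requiring one to rule out only finitely many prohibited configurations of disjoint index-set families; once this ingredient is in place, Baire's theorem on $C(X, [0,1]^{n+1})$ finishes the proof.
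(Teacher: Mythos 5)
Your proposal is correct in structure and detail, but note that the paper does not prove this lemma at all: it is quoted as a classical fact with citations to Kuratowski and to \cite[Corollary 2.4]{T23}, so there is no ``paper proof'' to match. What you have written is essentially the standard textbook argument (Baire category on the complete metric space $C(X,[0,1]^{n+1})$, openness of each $\mathcal{A}_m$ by compactness of $X^{n+2}$, density via a cover of order $\le n+1$ and mesh $<1/m$, a partition of unity, and general position of the vertex images), and the combinatorial endgame is right: disjointness of the index sets $S_j$ follows from the mesh bound, $|S_j|\le n+1$ from the order bound, and a common value $y$ would lie in $\bigcap_j\operatorname{conv}(p_i:i\in S_j)$. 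The only step you leave at the heuristic level is the general position claim, and your stated reason (``generic intersection of $n+2$ codimension-one subsets is empty'') should be made precise, since the simplices are not arbitrary codimension-one sets. The clean justification is exactly where the pairwise disjointness of the $S_j$ enters a second time: parametrize a common point by barycentric coordinates, so the incidence condition is $\sum_{i\in S_1}t^{(1)}_ip_i=\sum_{i\in S_j}t^{(j)}_ip_i$ for $j=2,\dots,n+2$, a system of $(n+1)^2$ scalar equations in at most $\sum_j(|S_j|-1)\le (n+2)n=(n+1)^2-1$ barycentric parameters; because the $S_j$ are disjoint, perturbing the $p_i$ with $i\in S_j$ moves only the $j$-th equation block surjectively (some $t^{(j)}_i>0$), so the incidence variety is a submersion preimage of codimension $(n+1)^2$ in $(\mathbb{R}^{n+1})^N\times\prod_j\Delta_{S_j}$ and its projection to the configuration space $(\mathbb{R}^{n+1})^N$ has measure zero. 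Taking the finite union over families and intersecting with a small ball around your initial choice of $p_i$'s (pushed into the interior of the cube if necessary) yields the desired perturbation. With that paragraph added, your argument is a complete, self-contained proof of the lemma, which is strictly more than the paper provides.
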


Now we extend \cite[Theorem 1.5]{ST23}  and its improved version \cite[Theorem 1.2]{T23} into the sofic context.

\begin{theorem} \label{waist app2}
Let $a$ be a natural number and $\Gamma \curvearrowright Y$ any dynamical system. Equip $X=([0, 1]^a)^\Gamma$ with the left shift action of $\Gamma$. Define a dynamically generating pseudometric on $X$, given by $\rho_X(x, x'):=||x(e_\Gamma)-x'(e_\Gamma)||_\infty$  and let  $\widetilde{\rho_X}$ be the compatible metric induced as in Proposition \ref{suppressing equality}. Then for any continuous $\Gamma$-equivariant map  $\pi \colon  ([0, 1]^a)^\Gamma \to  Y$, we have
$$\mdimsm(X|Y, \widetilde{\rho_X}) \geq a -\mdims(Y).$$
In particular, if $a -\mdims(Y) > 0$, we have $h_\Sigma(X|Y)=+\infty$ .
\end{theorem}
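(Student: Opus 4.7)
Proposition~\ref{suppressing equality} gives $\mdimsm(X|Y, \widetilde{\rho_X}) = \mdimsm(X|Y, \rho_X)$, and by the definition of the latter together with Theorem~\ref{equivalent metric definitions} (applied to $\rho_X$ and any compatible metric $\rho_Y$ on $Y$, which is automatically dynamically generating) this equals $\mdimsm(\rho_X|\rho_Y)$. It therefore suffices to prove, for every $b > \mdims(Y)$, the bound $\mdimsm(\rho_X|\rho_Y) \geq a - b$. The strategy is to build a sup-metric isometric copy of the cube $[0,1]^{a d_i}$ inside $\Map(\rho_X, F, \delta, \sigma_i)$, factor $\pi^{d_i}$ through a cube of dimension $bd_i + 1$ via a low-dimensional approximation of $Y$ combined with Hurewicz's Lemma~\ref{Kur}, and then apply the waist Lemma~\ref{waist inequality}.

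\textbf{Construction.} For each $\sigma_i \colon \Gamma \to \Sym(d_i)$ define $\iota_i \colon ([0,1]^a)^{[d_i]} \to X^{[d_i]}$ by $\iota_i(u)(v)(t) := u(\sigma_{i,t^{-1}}(v))$. Since $\rho_X$ reads only the $e_\Gamma$-coordinate and soficity (applied with $s=t=e_\Gamma$) forces $\sigma_{i,e_\Gamma}$ to be the identity on a set of density tending to $1$, a short calculation shows $\iota_i(u) \in \Map(\rho_X, F, \delta, \sigma_i)$ for every $F, \delta$ once $\sigma_i$ is good enough, while $\rho_{X,\infty}(\iota_i(u), \iota_i(u')) = \|u - u'\|_\infty$ (the $\sigma_{i,e_\Gamma}$-permutation of $[d_i]$ does not affect the max over $v$). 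Thus $\iota_i$ isometrically embeds $[0,1]^{ad_i}$ with the $\ell^\infty$-metric into $\Map(\rho_X, F, \delta, \sigma_i)$. Next, fix $\theta > 0$ and pick $F', \delta'$ with $\limsup_i \Wdim_\theta(\Map(\rho_Y, F', \delta', \sigma_i), \rho_{Y,\infty})/d_i \leq b$, which is possible since $b > \mdims(Y) \geq \mdim_\Sigma^\theta(\rho_Y)$. By Lemma~\ref{Map into}, after enlarging $F$ and shrinking $\delta$ we may assume $\pi^{d_i}(\Map(\rho_X, F, \delta, \sigma_i)) \subseteq \Map(\rho_Y, F', \delta', \sigma_i)$ for good $\sigma_i$. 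For $i$ large, choose a $(\theta, \rho_{Y,\infty})$-embedding $f_i$ of this image into a compact space $P_i$ with $\dim P_i \leq bd_i$, and apply Lemma~\ref{Kur} to obtain a continuous $h_i \colon P_i \to [0,1]^{bd_i+1}$ with $|h_i^{-1}(p)| \leq bd_i + 1$ for every $p$. Set $G_i := h_i \circ f_i \circ \pi^{d_i}$.

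\textbf{Waist and pigeonhole.} For $0 < \varepsilon < 1/2$ and $d_i$ large enough that $ad_i \geq bd_i + 1$, Lemma~\ref{waist inequality} applied to the continuous map $G_i \circ \iota_i \colon [0,1]^{ad_i} \to \bR^{bd_i+1}$ produces $p \in \bR^{bd_i+1}$ and an $(\varepsilon/2, \|\cdot\|_\infty)$-separated subset $E \subseteq (G_i \circ \iota_i)^{-1}(p)$ of cardinality at least $8^{-ad_i}(1/\varepsilon)^{(a-b)d_i - 1}$. The image $\iota_i(E)$ is then an $(\varepsilon/2, \rho_{X,\infty})$-separated subset of $\Map(\rho_X, F, \delta, \sigma_i)$ whose $f_i \circ \pi^{d_i}$-values all lie in the $(bd_i+1)$-point fiber $h_i^{-1}(p)$. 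Pigeonhole yields $E' \subseteq \iota_i(E)$ of size at least $|E|/(bd_i+1)$ sharing a common $f_i \circ \pi^{d_i}$-value $q$; since $f_i$ is a $(\theta, \rho_{Y,\infty})$-embedding, $f_i^{-1}(q)$ has $\rho_{Y,\infty}$-diameter $<\theta$, and hence $E'$ is $(\varepsilon/2, \theta, \rho_{X,\infty}|\rho_{Y,\infty})$-separated.

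\textbf{Conclusion.} Taking logarithms, dividing by $d_i$, and letting $i \to \infty$ gives $h_\Sigma^{\varepsilon/2}(\rho_X|\rho_Y, \theta, F, \delta) \geq (a-b)\log(1/\varepsilon) - a\log 8$; since $F, \delta$ were essentially arbitrary and $\theta$ is free, this passes to $h_\Sigma^{\varepsilon/2}(\rho_X|\rho_Y)$. Dividing by $\log(2/\varepsilon)$ and taking $\liminf_{\varepsilon\to 0}$ yields $\mdimsm(\rho_X|\rho_Y) \geq a - b$, and letting $b \downarrow \mdims(Y)$ proves the main inequality. The last sentence follows because a strictly positive $\mdimsm(X|Y, \widetilde{\rho_X})$ forces $h_\Sigma^\varepsilon(\rho_X|\rho_Y) \to \infty$ as $\varepsilon \to 0$, so $h_\Sigma(X|Y) = +\infty$. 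The chief obstacle is coordinating the three approximations --- the $e_\Gamma$-only embedding $\iota_i$, the $\theta$-embedding $f_i$, and the multi-valued Hurewicz map $h_i$ --- so that the combined prefactor $(bd_i+1)^{-1} 8^{-ad_i}$ does not absorb the waist gain $(1/\varepsilon)^{(a-b)d_i}$ on the $d_i^{-1}\log$-scale.
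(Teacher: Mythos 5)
Your proof is correct and follows essentially the same route as the paper's: the same cube embedding $u \mapsto \big((u(\sigma_{t^{-1}}(v)))_{t\in\Gamma}\big)_{v\in[d]}$ into $\Map(\rho_X, F', \delta', \sigma)$, the same factorization of $\pi^{d}$ through a low-dimensional space via Lemma \ref{Map into}, Lemma \ref{Kur}, and the waist inequality (Lemma \ref{waist inequality}), followed by the same pigeonhole over the finite Hurewicz fiber to produce an $(\varepsilon/2,\theta,\rho_{X,\infty}|\rho_{Y,\infty})$-separated set of the right cardinality. The only differences are cosmetic: you parametrize by $b > \mdims(Y)$ instead of by $\mdim_\Sigma^\theta(\rho_Y)+\theta$, and the appeal to Theorem \ref{equivalent metric definitions} in your setup is unnecessary (the reduction already follows from Proposition \ref{suppressing equality} and the definitions).
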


\begin{proof}
Fix a compatible metric compatible metric $\rho_Y$ on $Y$.  Write $K=[0, 1]^a$. Fix $0 < \varepsilon < 1/2$ and $\theta > 0$. Then there exist $\delta > 0$ and $F \in \cF(\Gamma)$ such that 
\begin{equation} \label{Fdelta reduction}
    \mdim_\Sigma^\theta(\rho_Y, F, \delta) \leq \mdim_\Sigma^\theta(\rho_Y) +\theta.
\end{equation}

Since $\rho_X$ is dynamically generating, by Lemma \ref{Map into}, there exist $\delta' >0$ and $F' \in \cF(\Gamma)$ such that 
$$\pi^d \left(\Map(\rho_X, F', \delta', \sigma)\right) \subseteq \Map(\rho_Y, F, \delta, \sigma)$$
for any good enough map $\sigma \colon \Gamma \to \Sym(d)$.
Let  $\Psi$ be the continuous map from  $K^d$ to $X^d$  defined  by sending $x$ to $\varphi_x=((x(\sigma_{t^{-1}}(v)))_{t \in \Gamma})_{v \in [d]}$. Clearly for every $x, x' \in K^d$ we have
\begin{equation} \label{metric comparison}
||x-x'||_\infty \leq \rho_{X, \infty}(\Psi(x), \Psi(x')).
\end{equation}
When $\sigma$ satisfies $|\{v \in [d]: \sigma_{e_\Gamma}\circ \sigma_s(v)=\sigma_s(v), \ {\rm for \ every \ } s \in F'\}| > 1-\delta'$, one has
$\Psi(K^d) \subseteq \Map(\rho_X, F', \delta', \sigma)$. From Proposition \ref{suppressing equality}, we have
$\mdimsm(X|Y, \widetilde{\rho_X})=\mdimsm(\rho_X|\rho_Y)$. Thus it reduces to show 
    $$h_\Sigma^{\varepsilon/2}(\rho_X|\rho_Y, \theta, F', \delta')  \geq  (a-\mdim_\Sigma^\theta(\rho_Y) +\theta)\log(1/\varepsilon) -a \log 8.$$

Let $\Phi_0 \colon \Map(\rho_Y, F, \delta, \sigma) \to P$ be a $(\theta, \rho_{Y, \infty})$-embedding for some compact metrizable space $P$ with $\dim P =\Wdim_\theta(\Map(\rho_Y, F, \delta, \sigma), \rho_{Y, \infty})$. According to Lemma \ref{Kur}, there exists a continuous map $g \colon P \to [0, 1]^{\dim P +1}$ such that 
$$\sup_{p \in [0, 1]^{\dim P +1}} |g^{-1}(p)| \leq \dim P +1.$$
As $\sigma$ is good enough the map $\Pi =\Phi_0 \circ \pi^d \circ \Psi \colon K^d \to P$ is well defined.  It then induces a continuous map $\Phi \colon K^d \to  \bR^{\dim P +1}$ sending $x$ to $g\circ\Pi(x)$.

Applying Lemma \ref{waist inequality} to $\Phi$, we obtain $\xi \in P $ such that 
\begin{equation*} 
    N_{\varepsilon/2}(\Phi^{-1}(\xi), ||\cdot||_\infty) \geq \frac{1}{8^{ad}}\left(\frac{1}{\varepsilon}\right)^{ad-\dim P -1}.
\end{equation*}
Since $|g^{-1}(\xi)| \leq \dim P +1 $, we can find $\eta \in g^{-1}(\xi)$ such that 
\begin{equation} \label{waist point again}
    N_{\varepsilon/2}(\Pi^{-1}(\eta), ||\cdot||_\infty) \geq \frac{1}{8^{ad}(\dim P +1)}\left(\frac{1}{\varepsilon}\right)^{ad-\dim P -1}.
\end{equation}
Let $\mathcal{E} \subseteq \Pi^{-1}(\eta)$ be a maximal $(\varepsilon/2, ||\cdot||_\infty)$-separated subset. For any $x, x' \in \mathcal{E}$, we have $\Phi_0(\pi\circ \varphi_x)=\eta=\Phi_0(\pi\circ \varphi_{x'})$. By the choice of $\Phi_0$, it follows that 
$\rho_{Y, \infty}(\pi \circ \varphi_x, \pi \circ \varphi_{x'}) < \theta$. Thus $\Psi(\mathcal{E})$ is $(\varepsilon/2, \theta, \rho_{X, \infty}|\rho_{Y, \infty})$-separated. Using (\ref{metric comparison}) and (\ref{waist point again}), we obtain
$$N_{\varepsilon/2}(\Map(\rho_X, F', \delta', \sigma), \theta, \rho_{X, \infty}|\rho_{Y, \infty}) \geq |\Psi(\mathcal{E})| =|\mathcal{E}|\geq \frac{1}{8^{ad}(\dim P +1)}\left(\frac{1}{\varepsilon}\right)^{ad-\dim P -1}.$$
In view of (\ref{Fdelta reduction}), we obtain
\begin{align*}
    h_\Sigma^{\varepsilon/2}(\rho_X|\rho_Y, \theta, F', \delta') & \geq (a-\mdim_\Sigma^\theta(\rho_Y, F, \delta))\log(1/\varepsilon) -a \log 8 \\
    & \geq  (a-\mdim_\Sigma^\theta(\rho_Y )+\theta)\log(1/\varepsilon) -a \log 8.
\end{align*}

\end{proof}

\begin{theorem} \label{bridge inequality}
   Let $\pi \colon X \to Y$ be a factor map. For any compatible metric $\rho_X$ on $X$, we have $\mdims(\pi) \leq \mdim_{\Sigma, \rM}(X|Y, \rho_X)$. In particular, if $h_\Sigma(X|Y) < \infty$, we have $\mdims(\pi) \leq 0$.

\end{theorem}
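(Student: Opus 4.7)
The plan is as follows. Pick a compatible (hence dynamically generating) metric $\rho_Y$ on $Y$; by Theorem \ref{equivalent metric definitions} we have $\mdim_{\Sigma, \rM}(X|Y, \rho_X) = \mdim_{\Sigma, \rM}(\rho_X|Y)$, which equals $\liminf_{\eta \to 0} h_\Sigma^\eta(\rho_X|Y)/\log(1/\eta)$. The strategy is to establish, for each fixed $\varepsilon > 0$ and each $0 < \eta < \varepsilon/2$, the scalewise inequality
\begin{equation*}
    \mdim_\Sigma^\varepsilon(\pi) \cdot \log(\varepsilon/(2\eta)) \leq h_\Sigma^\eta(\rho_X|Y);
\end{equation*}
since $\log(\varepsilon/(2\eta))/\log(1/\eta) \to 1$ as $\eta \to 0$, dividing by $\log(1/\eta)$ and taking $\liminf_{\eta \to 0}$ will yield $\mdim_\Sigma^\varepsilon(\pi) \leq \mdim_{\Sigma, \rM}(\rho_X|Y)$, and taking $\sup_\varepsilon$ will complete the main inequality.

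The crux is a fiberwise widim--vs--counting inequality: for any nonempty compact subset $K \subseteq (X^{[d]}, \rho_{X, \infty})$ and any $0 < \eta < \varepsilon/2$,
\begin{equation*}
    \Wdim_\varepsilon(K, \rho_{X, \infty}) \cdot \log(\varepsilon/(2\eta)) \leq \log N_\eta(K, \rho_{X, \infty}).
\end{equation*}
This is the standard tool underlying the classical inequality $\mdim \leq \mdim_\rM$ in \cite{LW00} and its sofic extension in \cite{Li13}: heuristically, a compact metric space of $\varepsilon$-width at least $k$ must contain at least $(\varepsilon/(2\eta))^k$ many $\eta$-separated points, via an Alexandrov-type subdivision or product-cube argument. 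I would apply this inequality to each fiber $\cK_\psi := \Map(\rho_X, F, \delta, \sigma_i) \cap (\pi^{d_i})^{-1}(\psi)$, then take $\sup_{\psi \in Y^{d_i}}$, divide by $d_i$, take $\limsup_{i\to\infty}$, and take $\inf_{F, \delta}$ in that order to obtain the displayed bound on $\mdim_\Sigma^\varepsilon(\pi)$.

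The ``in particular'' conclusion then follows routinely: when $h_\Sigma(X|Y) < \infty$, the quantity $h_\Sigma^\eta(\rho_X|Y)$ is uniformly bounded above by $h_\Sigma(\rho_X|Y) = h_\Sigma(X|Y)$ (by the independence of dynamically generating pseudometrics recorded after Proposition \ref{metric celebrated}), whence $\mdim_{\Sigma, \rM}(X|Y, \rho_X) \leq \liminf_{\eta \to 0} h_\Sigma(X|Y)/\log(1/\eta) = 0$, and so $\mdims(\pi) \leq 0$. The main obstacle I expect is verifying the scale-dependent widim--vs--counting inequality with the explicit $\log(\varepsilon/(2\eta))$ factor uniformly across all fibers $\cK_\psi \subseteq X^{[d]}$: the classical Alexandrov inequality is typically stated asymptotically (as lower box dimension vs.\ topological dimension), whereas here one needs the scale-by-scale version valid for every $\eta \in (0, \varepsilon/2)$. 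I expect this to be extractable from the nerve-theoretic/product-cube arguments of \cite[Section 4]{LW00} and \cite{Li13}, but careful bookkeeping of constants and of the interplay between $\rho_X$ on $X$ and $\rho_{X, \infty}$ on $X^{[d]}$ will be needed.
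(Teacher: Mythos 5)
Your reduction of the theorem to the single fiberwise inequality $\Wdim_\varepsilon(K,\rho_{X,\infty})\cdot\log(\varepsilon/(2\eta))\le\log N_\eta(K,\rho_{X,\infty})$, the order in which you then apply $\sup_\psi$, $\limsup_i$ and $\inf_{F,\delta}$ (the same data appears on both sides, so all three operations preserve the inequality), and the ``in particular'' step are all sound. The gap is in how you propose to obtain the key inequality. It is \emph{not} the tool underlying $\mdim\le\mdim_\rM$ in \cite{LW00} or \cite{Li13}, and it cannot be extracted from those arguments by bookkeeping: what the partition-of-unity/skeleton arguments there yield for a compact $\cK\subseteq X^{[d]}$ is an estimate of the shape
\[
\Wdim_\varepsilon(\cK,\rho_{X,\infty})\ \le\ \frac{\log N_\eta(\cK,\rho_{X,\infty})+O(|\cU|\,d)}{\log(1/\eta)-O_{\cU}(1)}+1,
\]
with an additive error proportional to the ambient dimension $|\cU|d$ of the cube $[0,1]^{\cU\times[d]}$ and a scale shift coming from the Lipschitz constants of the functions subordinate to the $\varepsilon$-cover $\cU$ of $X$; these corrections preclude a clean two-scale constant $\log(\varepsilon/(2\eta))$ valid for every $\eta<\varepsilon/2$. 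The inequality you want is true, but it comes from a different source: it is the contrapositive of the widim-versus-Hausdorff-content lemma of Lindenstrauss--Tsukamoto \cite{LT19} (if $X$ is covered by sets of diameter $\le\tau$ with $\sum_i(\mathrm{diam}\,E_i)^N<\tau^N$, then $\Wdim_\tau(X)<N$), applied to the cover of $K$ by the $\eta$-balls around a maximal $\eta$-separated set; any loss in the constant $2$ is harmless after dividing by $\log(1/\eta)$. If you invoke or reprove that lemma, your argument closes; as written, with the justification you give, it does not.

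For comparison, the paper does not factor the proof through any abstract two-scale lemma. Following \cite[Theorem 6.1]{Li13}, it fixes $\beta>0$, sets $D=\mdim_{\Sigma,\rM}(\rho_X|Y)$, chooses the cover $\cU$ and the Lipschitz functions $f_U$ first, and only then chooses $(F,\delta)$ small enough that for every $\psi\in Y^d$ some $\xi_\psi\in(0,1)^{\cU\times[d]}$ avoids the projections of $\Phi(\cK_\psi)$ to all coordinate subsets of size at least $(D+\beta)d$; pushing $\Phi(\cK_\psi)$ into the corresponding skeleton then gives $\Wdim_\varepsilon(\cK_\psi,\rho_{X,\infty})\le(D+\beta)d$ directly, with all correction terms absorbed into $\beta d$. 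Your route, once the \cite{LT19} lemma is supplied, is arguably cleaner and more modular; the paper's route has the advantage of relying only on machinery already set up in \cite{Li13}.
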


\begin{proof}
   We adapt the proof of \cite[Theorem 6.1]{Li13} into our situation. Fix $\beta >0$. Without loss of generality, we may assume $D:=\mdim_{\Sigma, \rM}(\rho_X|Y) < \infty$. For any $\varepsilon > 0$, our goal is to show
   $\mdim_{\Sigma}^\varepsilon(\pi, F, \delta) \leq D +\beta$ for some $F \in \cF(\Gamma)$ and $\delta > 0$.

Choose a finite open cover $\cU$ of $X$ such that ${\rm diam}(U, \rho_X) < \varepsilon$ for every $U \in \cU$. Applying \cite[Lemma 6.2]{Li13} to $\cU$, there exists a Lipschitz function $f_U \colon X \to [0, 1]$ vanishing on $X\setminus U$ such that $\max_{U \in \cU} f_U(x)=1$ for all $x \in X$.  For any map $\sigma \colon \Gamma \to \Sym(d)$ it then induces a continuous map $\Phi \colon X^d \to [0, 1]^{\cU \times [d]}$ sending $\varphi$ to $(f_U(\varphi(v)))_{U \in \cU, v\in [d]}$. Drawing from the proof of \cite[Lemma 6.3]{Li13}, there exist $\delta > 0$ and  $F \in \cF(\Gamma)$ satisfying that,  for good enough $\sigma$, for each $\psi \in Y^d$, there exists $\xi_\psi \in (0, 1)^{\cU \times [d]}$  such that 
$$\xi_\psi |_S \notin \Phi(\Map(\rho_X, F, \delta, \sigma)\cap  
(\pi^{d})^{-1}(\psi))|_S$$
for every $S \subseteq \cU \times [d]$ with $|S| \geq (D+\beta)d$. 

For each $\psi \in Y^d$ write $\cK_\psi=\Map(\rho_X, F, \delta, \sigma)\cap (\pi^{d})^{-1}(\psi)$ and $\mathcal{Z}_\psi =\Phi(\cK_\psi)$.
 Put $m=\lfloor(D+\beta)d\rfloor$ and $W=\cU\times [d]$. From the argument of \cite[Lemma 6.5, Lemma 6.4]{Li13}, we can use $\xi_\psi$ to construct a continuous map $\Psi_\psi \colon \mathcal{Z}_\psi \to [0, 1]^W$ such that $\dim \Psi_\psi(\mathcal{Z}_\psi) \leq m$, and further, the map $\Psi_\psi\circ \Phi \colon \cK_\psi \to \Psi_\psi(\mathcal{Z}_\psi)$ makes an $(\varepsilon, \rho_{X, \infty})$-embedding due to the choice of $\cU$. Consequently, for any $\psi \in Y^d$, we obtain
$$\Wdim_\varepsilon(\cK_\psi, \rho_{X, \infty}) \leq \dim \Psi_\psi(\mathcal{Z}_\psi) \leq (D+\beta)d$$
and hence $\mdim_\Sigma^\varepsilon(\pi, F, \delta) \leq D+\beta$.

\begin{example}
    Let $\Gamma$ be the free group generated by $a$ and $b$ and $\bT=\bR/\bZ$.  The Ornstein-Weiss factor map $\pi \colon \bT^\Gamma \to (\bT^\Gamma)^2$ is defined by sending $x=(x_s)_{s \in \Gamma}$ to $(x_s-x_{sa}, x_s-x_{sb})_{s \in \Gamma}$.   The map $\pi$ is a surjective continuous group homomorphism with the kernel isomorphic to $\bT$ and hence is a $\bT$-extension. Set $X=\bT^\Gamma$ and $Y=(\bT^\Gamma)^2$. Let $\rho_\bT$ be the compatible metric  on $\bT$ defined by 
    $$\rho_\bT(x+\bZ, y+\bZ):=\min_{k \in \bZ}|x-y+k|.$$ 
    It induces a dynamically generating pseudometric $\rho_X$ on $\bT^\Gamma$ defined similarly as in Proposition \ref{waist app2}.
    By Theorem \ref{bridge inequality} and Proposition \ref{metric G-extension}, we have
    $$\mdims(\pi)\leq \mdim_{\Sigma, \rM}(\rho_X|Y)=\mdim_{\Sigma, \rM}(\bT, \rho_\bT)=0.$$
Note that the identity of $X$ is a fixed point of $\Gamma \curvearrowright X$. It follows that  $\mdims(\pi) \geq 0$ and hence  $\mdims(\pi) = 0$.  

\end{example}


\end{proof}

\section{Question}

A remaining question on conditional sofic mean dimension is as follows:
\begin{question}
Let $\pi \colon X \to Y$ be a factor map under the actions of a sofic group $\Gamma$. Suppose that $\rho_X$ and $\rho_Y$ are compatible metrics on $X$ and $Y$ respectively.  Do we have the equivalence that $\mdims(\rho_X|\rho_Y) = \mdims(\rho_X|Y)$?\\
\end{question}


\end{document}